\documentclass[12pt,reqno]{amsart}

\usepackage[usenames,dvipsnames]{color}
\usepackage{amssymb}
\usepackage{comment}
\usepackage{enumitem}
\usepackage{graphicx}
\usepackage{esint}
\usepackage{pdfpages}
\usepackage{svg}
\usepackage{caption}
\usepackage{mathrsfs}
\usepackage{xypic}
\usepackage[unicode,colorlinks=true,linkcolor=Red,citecolor=Green]{hyperref}
\AtBeginDocument{\addtocontents{toc}{\protect\setlength{\parskip}{0pt}}}

\SetLabelAlign{parright}{\parbox[t]{\labelwidth}{\raggedleft#1}}

\newtheorem{theorem}{Theorem}[section]
\newtheorem{proposition}[theorem]{Proposition}
\newtheorem{lemma}[theorem]{Lemma}
\newtheorem{corollary}[theorem]{Corollary}

\theoremstyle{definition}
\newtheorem{definition}[theorem]{Definition}

\theoremstyle{remark}
\newtheorem{remark}[theorem]{Remark}

\numberwithin{equation}{section}

\def\CIc{C^\infty_{\mathrm c}} % smooth compactly supported
\def\indic{\operatorname{1\hskip-2.75pt\relax l}} % Indicator function macro, courtesy of Colin Guillarmou

\newcommand{\AD}{\mathrm{AD}} % Ahlfors-David regular

\DeclareMathOperator{\const}{const}

\DeclareMathOperator{\FUP}{FUP}

\newcommand{\KT}{\mathrm{KT}} % Katz-Tao
\newcommand{\Lip}{\mathrm{Lip}} % Lipschitz

\renewcommand{\Re}{\operatorname{Re}}
\newcommand{\RKT}{\mathrm{RKT}} % rectangular Katz-Tao

\DeclareMathOperator{\supp}{supp}

\title{A $\tfrac{1}{64}$ spectral gap for surfaces with $\delta=\tfrac12$}

\author{Alex Cohen}
\email{alexcohen@nyu.edu}

\address{Courant Institute of Mathematical Sciences, New York University, 251 Mercer Street, New York, NY 10012}

\author{Semyon Dyatlov}
\email{dyatlov@math.mit.edu}

\address{Massachusetts Institute of Technology, 77 Massachusetts Ave, Cambridge, MA 02139}

\begin{document}

\begin{abstract}
We show that any convex co-compact hyperbolic surface with exponent of convergence of Poincar\'e series $\delta \in (\frac25,\frac{14}{27})$
has an essential spectral gap of size $\beta=\tfrac78(\tfrac12-\delta)+\tfrac{1}{32}\delta-\epsilon$
for any $\epsilon>0$.
In particular, for $\delta=\frac12$ this becomes $\beta=\tfrac{1}{64}-\epsilon$. 
We show existence of the gap by proving a new Fractal Uncertainty Principle,
using a two-ends Furstenberg theorem of O'Regan--Wu--Yi~\cite{ORegan-Wu-Yi}. 
\end{abstract}

\maketitle

%%%%%%%%%%%%%%%%%%%%%%%%%%%%%%%%%%%%%%%%%%%%%%%%%%%%%%%%%%%%%%%%%%%%%%%%%%%%%%%%
%%%%%%%%%%%%%%%%%%%%%%%%%%%%%%%%%%%%%%%%%%%%%%%%%%%%%%%%%%%%%%%%%%%%%%%%%%%%%%%%
\section{Introduction}

Let $M=\Gamma\backslash\mathbb H^2$ be a convex co-compact hyperbolic surface.
The Selberg zeta function is defined as follows:
\begin{equation}
  \label{e:zeta-def}
Z_M(s)=\prod_{\ell\in \mathcal L_M}\prod_{k=0}^\infty \big(1-e^{-(s+k)\ell}\big),\quad\Re s\gg 1,
\end{equation}
and it extends meromorphically to $s\in\mathbb C$. Here $\mathcal L_M$ is the set of
lengths of primitive closed geodesics on~$M$ (counted with multiplicity).
See for example the book of Borthwick~\cite[Chapters~10, 14, 15, 16]{Borthwick-Book} for more information. Our main result is
%%%%%%%%%%%%%%%%%%%%%%%%%%%%%%%%%%%%%%%%%%%%%%%%%%%%%%%%%%%%%%%%%%%%%%%%%%%%%%%%
\begin{theorem}
  \label{t:gap}
Let $\delta\in [0,1]$ be the
exponent of convergence of the Poincar\'e series of the group $\Gamma$. Then for each
\begin{equation}
  \label{e:gap}
\beta< \tfrac78(\tfrac12-\delta)+\tfrac{1}{32}\delta,
\end{equation}
the zeta function~\eqref{e:zeta-def} has only finitely many zeroes in $\{\Re s>\tfrac12-\beta\}$.
\end{theorem}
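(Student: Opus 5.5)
The plan follows the standard Dyatlov--Zahl / Bourgain--Dyatlov route: reduce the spectral gap to a Fractal Uncertainty Principle (FUP) for the limit set, then prove that FUP with the exponent in~\eqref{e:gap}.

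\textbf{Reduction to an FUP.} By the results of Dyatlov--Zahl and Dyatlov--Zworski (see also Bourgain--Dyatlov), Theorem~\ref{t:gap} follows from an FUP for the limit set $\Lambda_\Gamma\subset\mathbb R$. Concretely, let $\Lambda(h)$ be the $h$-neighborhood of $\Lambda_\Gamma$, and let $\mathcal B_h$ be the semiclassical Fourier transform with kernel $e^{-ix\xi/h}$, localized to compact sets. It suffices to show
\[
\|\indic_{\Lambda(h)}\mathcal B_h\indic_{\Lambda(h)}\|_{L^2\to L^2}\le Ch^{\beta}
\quad\text{for all }\beta<\tfrac78(\tfrac12-\delta)+\tfrac1{32}\delta .
\]
This must hold with $\mathcal B_h$ replaced by operators with phases $\Phi(x,y)$ satisfying $\partial_{xy}^2\Phi\ne0$, as in the hyperbolic scattering application. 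The reduction is a black box: it constructs the quantum propagator, builds a parametrix for the resolvent, and converts the FUP into finiteness of resonances, hence of zeta zeros, in $\{\Re s>\tfrac12-\beta\}$. The input we need is that $\Lambda_\Gamma$ is $\delta$-Ahlfors--David regular on scales $h\le r\le1$, with constants depending only on $\Gamma$. We note that the trivial volume bound already gives $h^{\frac12-\delta}$. So the real content is the extra gain of $\tfrac1{32}\delta-\tfrac18(\tfrac12-\delta)$, which is positive only in the stated window near $\delta=\tfrac12$.

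\textbf{Proving the FUP.} We pass from $L^2\to L^2$ bounds to an $L^2\to L^4$ (additive-energy-type) estimate, since the gain comes from non-concentration of $\Lambda\times\Lambda$ along the curves $\{(x,y):\Phi(x,y)=c\}$. We would proceed in four steps.
\begin{enumerate}
\item Interpolate between the trivial volume bound and a Strichartz/$TT^*$ bound for $\mathcal B_h\indic_{\Lambda(h)}$. Combining these by H\"older is what accounts for a weighting like $\tfrac78$.
\item Control the $L^4$ norm by counting incidences between $h$-balls covering $\Lambda(h)\times\Lambda(h)$ and $h$-tubes, i.e.\ neighborhoods of the level curves of $\Phi$.
\item Observe that Ahlfors--David regularity gives Katz--Tao / Frostman-type spacing conditions for the balls. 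These tubes then form a two-ends, Frostman-type family.
\item Apply the two-ends Furstenberg set estimate of O'Regan--Wu--Yi to these tube families.
\end{enumerate}
This should yield a power improvement over the trivial incidence count that is proportional to $\delta$. Optimizing the exponents then gives the $\tfrac1{32}\delta$ term.

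\textbf{Main obstacle.} The hardest step is verifying the hypotheses of the O'Regan--Wu--Yi theorem, namely the two-ends condition and the Katz--Tao spacing of tubes and balls, for the configurations arising from the limit set. Pigeonholing scales (dyadic decomposition in ball density and tube richness) is needed to make each piece uniform. The other delicate point is passing from line tubes to the curved level sets of the general phase $\Phi$. For this, we would first rescale to a small box where the curves are nearly straight, using the self-similarity of $\Lambda_\Gamma$ under $\Gamma$ (Möbius maps) to renormalize. Then we would treat them as lines up to harmless losses of $h^{-\epsilon}$, which is absorbed by the $-\epsilon$ in the final gap.
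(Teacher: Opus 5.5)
\textbf{Gap: the proposal never uses the nonlinearity of the hyperbolic phase, and the statement you set out to prove is false.}

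You reduce to an FUP for the semiclassical Fourier transform, or for any phase with $\partial_x\partial_y\Phi\neq0$, using only $\delta$-regularity of $\Lambda_\Gamma$. With that input the target bound cannot hold. Dyatlov--Jin construct $\delta$-regular sets for which the Fourier FUP exponent tends to $\tfrac12-\delta$; at $\delta=\tfrac12$ it tends to $0$, so it falls below $\tfrac1{64}$. Kangabire gives analogous examples for $\delta\geq\tfrac12$. So for $\Theta=-xy$, no exponent depending only on $\delta$ (with $C_{\AD}$ entering only the constant) exists, and no incidence estimate can close your argument.

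The missing ingredient is the nonlinear phase condition $\partial_x\partial_y\log|\partial_x\partial_y\Theta|\neq0$. It holds for the phase $2\log|e^{i\theta}-e^{i\theta'}|$ of the operator $\mathcal B_\chi(h)$ that the Dyatlov--Zahl reduction actually produces, and it fails for $-xy$. The FUP must be proved for that phase directly, and for $\Lambda_\Gamma(h^\rho)$ with $\rho<1$ rather than $\Lambda_\Gamma(h)$.

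Your plan to zoom into boxes where the curves are nearly straight and then treat them as lines goes the wrong way. In the normal form $\Theta=xy+x^2y^2+\mathcal O(|(x,y)|^5)$, zooming by $\lambda$ turns the nonlinear term into $\lambda^2x^2y^2$. That term is then invisible at the resolution scale, which puts you back in the linear case where the claim is false.

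\textbf{Where the nonlinearity enters, and where the real work lies.} Interpolating the $L^\infty\to L^4$ norm against $L^1\to L^\infty$ and $|X|$ reduces the problem to bounding the volume of
\[
\{(x,\vec y)\in X\times Y^4\colon |\partial_x\vec\Theta(x,\vec y)|\leq r\},\qquad r\approx h^{\frac12-\epsilon}.
\]
This is an incidence count between $r$-squares over $X$ and the graphs of the two-parameter family
\[
F_{\tilde y}(x)=\partial_x\Theta(x,y_1)+\partial_x\Theta(x,y_2),\qquad \tilde y\in Y^2.
\]
It is not a count of tubes around level curves of the phase over $\Lambda\times\Lambda$.

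The O'Regan--Wu--Yi theorem (their Theorem~4.4) requires 1-jet transversality and a rectangular Katz--Tao bound on the jets $(F(x),F'(x))$. Both hold because $\tilde y\mapsto(F_{\tilde y}(x),\partial_xF_{\tilde y}(x))$ factors through $(y_1+y_2,(y_1-y_2)^2)$ with Jacobian $\approx4|y_1-y_2|$, and that Jacobian comes exactly from the $x^2y^2$ term. For $\Theta=-xy$ one has $\partial_xF_{\tilde y}\equiv0$. Then all jets lie on a line, the rectangular Katz--Tao bound fails, and you are counting the ordinary additive energy of $Y$, which can be large.

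The Katz--Tao condition on the shadings, which you flag as the main obstacle, is actually immediate from AD regularity of $X$. The substantive step is the rectangular Katz--Tao bound. It requires pigeonholing in three parameters:
\begin{itemize}
\item the distance $|y_1-y_2|\sim\tilde r$, where the Jacobian degenerates;
\item the multiplicity $m$ of the discretization $\tilde y\mapsto F_{\tilde y}$;
\item the richness $w$ of the squares.
\end{itemize}
One then covers $\{\tilde y\colon \mathcal A_x(\tilde y)\in I_1\times I_2\}$ by moderate regions bounded by lines $y_2-y_1=\const$, level curves of $F_{\tilde y}(x)$, and the circle-like level curves of $\partial_xF_{\tilde y}(x)$.
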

%%%%%%%%%%%%%%%%%%%%%%%%%%%%%%%%%%%%%%%%%%%%%%%%%%%%%%%%%%%%%%%%%%%%%%%%%%%%%%%%
In particular, if $\delta=\tfrac12$
then $M$ has an essential spectral gap of any size $\beta<\frac{1}{64}$.
In general, \eqref{e:gap} improves over the standard gap
$\beta=\max(0,\tfrac12-\delta)$ for $\delta\in (\frac25,\frac{14}{27})$.
See Figure~\ref{f:FUP-beta}. 

In general, we say that $M$ has an \emph{essential spectral gap}
of size $\beta\in\mathbb R$, if $Z_M(s)$ has only finitely many zeroes in $\{\Re s>\tfrac 12-\beta\}$.
Spectral gaps have applications to exponential local energy decay
for waves, local smoothing estimates, Strichartz estimates, length spectrum asymptotics,
and number theory; see for instance the surveys of Nonnenmacher~\cite{Nonnenmacher-OQC} and Zworski~\cite{Zworski-resonances}
for background on the spectral gap question 
and the papers of Dyatlov--Zahl~\cite{hgap} and Bourgain--Dyatlov~\cite{fullgap} for more
information on the setting used here. We list some of the previous results in~\S\ref{s:history} below.

%%%%%%%%%%%%%%%%%%%%%%%%%%%%%%%%%%%%%%%%%%%%%%%%%%%%%%%%%%%%%%%%%%%%%%%%%%%%%%%%
\begin{figure}
\includegraphics{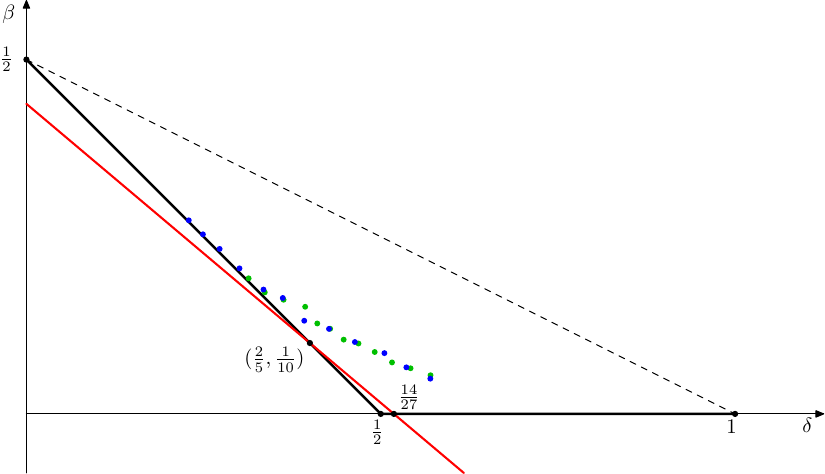}
\caption{The spectral gap of Theorem~\ref{t:gap}, marked in red.
The solid line is the standard gap $\beta=\max(0,\frac12-\delta)$.
The dashed line is the gap conjectured by Jakobson--Naud~\cite{Jakobson-Naud},
$\beta=\tfrac12-\tfrac12\delta$. The circles correspond to numerically computed
spectral gaps for symmetric 3-funneled surfaces (blue) and 4-funneled surfaces (green)
from the work of Borthwick--Weich~\cite[Figure~15]{Borthwick-Weich}, speficically $G^{I_1}_{100}$ in the notation of that paper.}
\label{f:FUP-beta}
\end{figure}
%%%%%%%%%%%%%%%%%%%%%%%%%%%%%%%%%%%%%%%%%%%%%%%%%%%%%%%%%%%%%%%%%%%%%%%%%%%%%%%%
Theorem~\ref{t:gap} relies on a fractal uncertainty principle, which follows the strategy introduced in~\cite{hgap}. To state it, define the following family of operators $\mathcal B_\chi(h)$ on~$L^2(\mathbb S^1)$
depending on the semiclassical parameter $h\in (0,1]$:
\begin{equation}
  \label{e:B-chi-def-1}
\mathcal B_\chi(h)f(y)=(2\pi h)^{-\tfrac12}\int_{\mathbb S^1}|y-y'|^{2i\over h}\chi(y,y')f(y')\,dy'.
\end{equation}
Here $|y-y'|$ denotes the Euclidean distance between $y,y'\in\mathbb S^1$, with $\mathbb S^1$
embedded as the unit circle into~$\mathbb R^2$, and the cutoff function $\chi$ satisfies
$$
\chi\in\CIc(\mathbb S^1_\Delta)\quad\text{where}\quad
\mathbb S^1_\Delta:=\{(y,y')\in\mathbb S^1\times\mathbb S^1\colon y\neq y'\}.
$$
For $\rho\in (0,1)$, denote by $\Lambda_\Gamma(h^\rho)\subset\mathbb S^1$ the $h^\rho$-neighborhood of the limit
set of $\Gamma$. For a set $X\subset\mathbb S^1$, denote by $\indic_X:L^2(\mathbb S^1)\to L^2(\mathbb S^1)$ the multiplication operator by the indicator function of $X$.

Now Theorem~\ref{t:gap} is obtained by combining either~\cite[Theorem~3]{hgap}
or the later result of Dyatlov--Zworski~\cite{DZ-TUG} with the following fractal uncertainty principle:
%%%%%%%%%%%%%%%%%%%%%%%%%%%%%%%%%%%%%%%%%%%%%%%%%%%%%%%%%%%%%%%%%%%%%%%%%%%%%%%%
\begin{theorem}
  \label{t:FUP}
Assume that $\beta$ satisfies~\eqref{e:gap}. Then there exists $\rho\in (0,1)$ depending only on~$\delta,\beta$
such that for all $\chi\in\CIc(\mathbb S^1_\Delta)$ and $h\in (0,1]$,
\begin{equation}
  \label{e:FUP}
\|\indic_{\Lambda_\Gamma(h^\rho)}\mathcal B_\chi(h)\indic_{\Lambda_\Gamma(h^\rho)}\|_{L^2(\mathbb S^1)\to L^2(\mathbb S^1)}\leq Ch^\beta,
\end{equation}
where the constant $C$ depends only on $M,\beta,\chi$, but not on~$h$.
\end{theorem}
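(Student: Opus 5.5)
We reduce the curved estimate~\eqref{e:FUP} to a Fourier-side fractal uncertainty principle for Ahlfors--David regular sets of dimension $\delta$, following~\cite{hgap}, and prove the flat FUP by incidence geometry fed by the two-ends Furstenberg theorem of O'Regan--Wu--Yi~\cite{ORegan-Wu-Yi}.

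\textbf{Step 1: reduction to a flat estimate.} The limit set $\Lambda_\Gamma$ is $\delta$-Ahlfors--David regular on scales $[0,1]$ with a constant depending only on $M$. Take a partition of unity on $\mathbb S^1_\Delta$ and apply a Möbius change of variables to each piece. Following the scheme of~\cite{hgap} (and the local linearization of~\cite{fullgap}), it suffices to show
\[
\|\indic_{X}\,\mathcal F_h\,\indic_{Y}\|_{L^2\to L^2}\le C h^{\beta}
\]
for $h^\rho$-neighborhoods $X,Y$ of $\delta$-regular sets. Here the phase $\frac{2}{h}\log|y-y'|$ is locally a nondegenerate phase of the type handled in~\cite{hgap}, and $\rho$ is taken close to $1$ so that the neighborhood loss is $h^{-(1-\rho)\cdot O(1)}$, which is absorbed into $\epsilon$. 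Equivalently, we rescale to $h^{-1}$-discretized sets at scale $1$ with frequencies in $[-h^{-1},h^{-1}]$ and work with $\delta$-regular sets of $h^{-1}$-unit intervals.

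\textbf{Step 2: the flat FUP via $L^4$ / additive energy.} Write $\|\indic_X\mathcal F_h\indic_Y\|\le\|\indic_X\mathcal F_h\indic_Y\|^{1/2}_{...}$ and run a $TT^*$ argument twice. This bounds the norm by $h^{-?}$ times the fourth root of an incidence count: the number of quadruples $(x_1,x_2,y_1,y_2)$ with $(x_1-x_2)(y_1-y_2)\approx h$ type relations. In the curved setting it becomes the number of $h$-tubes (curves $\{(y,y'):\ \partial_y\Phi\in\,\cdot\,\}$) in $Y\times Y$ meeting $h$-balls of $X\times X$. The trivial count gives the standard gap $\frac12-\delta$. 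Any improvement comes from bounding the number of rich tubes. The planar set $X\times Y$ is a product of $\delta$-regular sets, hence Katz--Tao $(2\delta)$-regular, and the tube family is parametrized by a $\delta$-dimensional set of directions.

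\textbf{Step 3: apply two-ends Furstenberg.} For each relevant tube we count its intersections with the product set. A dichotomy follows. If many tubes carry mass concentrated in one short segment (one-ended), the regularity of $Y$ at the smaller scale gives a gain directly. Otherwise the two-ends condition of~\cite{ORegan-Wu-Yi} holds, and the union of $s$-Furstenberg sets has dimension at least $\min(s+t,\tfrac{3s+t}{2},s+1)$. This yields a lower bound on $|X\times Y|$ contradicting its size $h^{-2\delta}$ unless the richness is small. Optimizing the two-ends parameter against the one-ended loss should produce the exponent $\frac78(\frac12-\delta)+\frac1{32}\delta$. The $\frac18$ weights come from interpolating the energy bound with the trivial bound inside the fourth-root.

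\textbf{Main obstacle.} The hardest step is Step 3. We need to verify the Katz--Tao and two-ends hypotheses quantitatively at scale $h$ for the discretized product sets, and to handle the curvature of the tubes, which are Möbius images of lines. Our first attempt is to use the Möbius invariance to straighten the tubes locally, then pigeonhole dyadically in richness and scale. The bookkeeping needed to land exactly on $\frac7{8},\frac1{32}$ is the delicate part.
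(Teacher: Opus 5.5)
There is a genuine gap. It sits in Step~1 and carries through the rest of the argument. You linearize the phase and reduce to a Fourier-side estimate $\|\indic_X\mathcal F_h\indic_Y\|\le Ch^\beta$ for neighborhoods of general $\delta$-regular sets. Steps~2--3 then use nothing beyond AD/Katz--Tao regularity of $X\times Y$.

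That reduced statement is false with the exponent you need. As the paper notes right after Theorem~\ref{t:FUP-intro-general}, Dyatlov--Jin~\cite{oqm} (and Kangabire~\cite{Kangabire-FUP} for $\delta\ge\frac12$) construct $\delta$-regular sets for which the Fourier FUP exponent tends to the standard value $\max(0,\frac12-\delta)$. At $\delta=\frac12$ the improvement can be as small as $\sim(\log C_{\AD})^{-1}$. Your target $\beta=\frac78(\frac12-\delta)+\frac1{32}\delta-\epsilon$ depends only on $\delta$; only the constant may depend on $C_{\AD}$. So no argument that sees only a linear phase and the regularity of the sets can reach it. For the same reason, your plan to straighten the tubes by M\"obius invariance discards exactly the structure the proof needs.

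The missing idea is the nonlinear phase condition~\eqref{e:phase-nonlinear}, $\partial_x\partial_y\log|\partial_x\partial_y\Theta|\neq0$. It holds for $\Theta=2\log|2\sin\frac{\theta-\theta'}{2}|$ and fails for $-xy$. The paper keeps the curved phase and gauge-normalizes it to $xy+x^2y^2+\mathcal O(|(x,y)|^5)$. It then bounds the $L^2$ norm by interpolating $L^\infty\to L^4$ with $L^1\to L^\infty$. This reduces matters to the measure of the set of $(x,\tilde y,\tilde y')\in X\times Y^4$ with $|F_{\tilde y}(x)-F_{\tilde y'}(x)|\le r$, where
\[
F_{\tilde y}(x)=\partial_x\Theta(x,y_1)+\partial_x\Theta(x,y_2)\approx (y_1+y_2)+x\big((y_1+y_2)^2+(y_1-y_2)^2\big).
\]
For the Fourier phase, $F_{\tilde y}$ is constant in $x$, so this quantity is just $|X|$ times the additive energy of $Y$, and there is no universal gain for that. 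With the $x^2y^2$ term, the 1-jets of $F_{\tilde y}$ determine $(y_1+y_2,(y_1-y_2)^2)$. The graphs therefore form a transversal two-parameter family of curves, and the energy becomes a point--curve incidence count.

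That count is bounded by the quantitative incidence inequality \cite[Theorem~4.4]{ORegan-Wu-Yi}, not by a Furstenberg-set dimension bound. Two hypotheses must be verified:
\begin{itemize}
\item a rectangular Katz--Tao bound on the 1-jets, from AD regularity of $Y$, with dyadic pigeonholing in $|y_1-y_2|$ (the Jacobian is $\approx4|y_1-y_2|$) and in multiplicity;
\item a Katz--Tao shading condition, from AD regularity of $X$, which needs $\delta\le\frac23$.
\end{itemize}
This yields the energy bound $r^{5-\frac72\delta-\epsilon}$. With $r\approx h^{\frac12}$ the exponents combine to exactly $\frac7{16}-\frac{27}{32}\delta$. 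Your proposal gives no route to these numbers.
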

%%%%%%%%%%%%%%%%%%%%%%%%%%%%%%%%%%%%%%%%%%%%%%%%%%%%%%%%%%%%%%%%%%%%%%%%%%%%%%%%

\subsection{General fractal uncertainty principle}

Fractal uncertainty principles hold for a more general class of semiclassical Fourier integral operators
and fractal sets, which we define now. Assume that $\mathcal U\subset \mathbb R^2$ is an open set and
\begin{equation}
  \label{e:FUP-Theta-chi}
\Theta\in C^\infty(\mathcal U;\mathbb R),\quad
\chi\in\CIc(\mathcal U).
\end{equation}
Consider the operator $\mathcal B(h):L^2(\mathbb R)\to L^2(\mathbb R)$ of the form
\begin{equation}
  \label{e:B-defined-general}
\mathcal B(h)f(x)=(2\pi h)^{-\tfrac12}\int_{\mathbb R} e^{{i\over h}\Theta(x,y)}
\chi(x,y)f(y)\,dy.
\end{equation}
We assume that the phase function $\Theta$ satisfies the nondegeneracy condition
\begin{equation}
  \label{e:phase-nondegenerate}
\partial_x\partial_y\Theta\neq 0\quad\text{on }\mathcal U.
\end{equation}
The operator $\mathcal B_\chi(h)$ from~\eqref{e:B-chi-def-1} has the form~\eqref{e:B-defined-general},
if we cut the circle $\mathbb S^1$ to identify it with an interval (which can be achieved by a partition of unity on $\chi$) and put, with $\theta,\theta'\in\mathbb R$ the angle coordinates on the circle,
\begin{equation}
  \label{e:our-Theta-def}
\begin{aligned}
\mathcal U&=\{(\theta,\theta')\in\mathbb R\colon \theta-\theta'\notin 2\pi\mathbb Z\},
\\
\Theta(\theta,\theta')&=2\log|e^{i\theta}-e^{i\theta'}|=2\log\bigg|2\sin\Big({\theta-\theta'\over 2}\Big)\bigg|.
\end{aligned}
\end{equation}
Another operator of the form~\eqref{e:B-defined-general} is the cutoff Fourier transform,
corresponding to $\Theta(x,y)=-xy$.

Our proof exploits a key new condition which we call the \emph{nonlinear phase condition}:
\begin{equation}
  \label{e:phase-nonlinear}
\partial_x\partial_y\log|\partial_x\partial_y\Theta|\neq 0\quad\text{on }\mathcal U.
\end{equation}
This condition fails for $\Theta(x,y)=-xy$ but it holds for the phase defined in~\eqref{e:our-Theta-def}:
$$
\begin{aligned}
\partial_\theta\partial_{\theta'} \Theta &= \frac12 \sin^{-2}\Big({\theta-\theta'\over 2}\Big),
\\
\partial_\theta\partial_{\theta'} \log |\partial_\theta\partial_{\theta'}\Theta| &=
-\frac12 \sin^{-2}\Big({\theta-\theta'\over 2}\Big).
\end{aligned}
$$
The key property of the sets $\Lambda_\Gamma$ is \emph{Ahlfors--David regularity}.
We follow~\cite[Definition~1.1]{fullgap}:
%%%%%%%%%%%%%%%%%%%%%%%%%%%%%%%%%%%%%%%%%%%%%%%%%%%%%%%%%%%%%%%%%%%%%%%%%%%%%%%%
\begin{definition}
\label{d:ADR}
Let $X\subset\mathbb R$ be a nonempty closed set and $\delta\in [0,1]$, $C_{\AD}\geq 1$,
$0\leq r_1\leq r_0\leq \infty$. We say that $X$ is \emph{$\delta$-dimensional AD regular on scales $r_1$ to $r_0$ with constant $C_{\AD}$} if there exists a Borel measure $\mu_X$ on~$\mathbb R$ such that:
\begin{itemize}
\item $\mu_X$ is supported on~$X$, that is $\mu_X(\mathbb R\setminus X)=0$;
\item for each interval $I$ of length $|I|\in [r_1,r_0]$, we have $\mu_X(I)\leq C_{\AD}|I|^\delta$;
\item if additionally $I$ is centered at a point in~$X$, then $\mu_X(I)\geq C_{\AD}^{-1}|I|^\delta$.
\end{itemize}
\end{definition}
%%%%%%%%%%%%%%%%%%%%%%%%%%%%%%%%%%%%%%%%%%%%%%%%%%%%%%%%%%%%%%%%%%%%%%%%%%%%%%%%
If $M=\Gamma\backslash\mathbb H^2$ is a convex co-compact hyperbolic surface,
then the limit set $\Lambda_\Gamma$ is $\delta$-dimensional AD regular on scales~0 to~1
with some constant $C_{\AD}$, see for example~\cite[Lemma~14.13 and Theorem~14.14]{Borthwick-Book}.
The neighborhood $\Lambda_\Gamma(h^\rho)$ is then $\delta$-dimensional AD regular on scales $h^\rho$ to~1
with constant $8C_{\AD}$, as follows from Lemma~\ref{l:AD-nbhd} below. Thus Theorem~\ref{t:FUP} follows from the following more general fractal uncertainty principle:
%%%%%%%%%%%%%%%%%%%%%%%%%%%%%%%%%%%%%%%%%%%%%%%%%%%%%%%%%%%%%%%%%%%%%%%%%%%%%%%%
\begin{theorem}
  \label{t:FUP-intro-general}
Assume that
\begin{equation}
  \label{e:delta-beta-rho}
\delta\in [\tfrac25,\tfrac{14}{27}],\quad
\epsilon\in (0,0.1],\quad
\beta:=\tfrac78(\tfrac12-\delta)+\tfrac{1}{32}\delta-2\epsilon,\quad
\rho:=1-\epsilon.
\end{equation}
Let $\mathcal B(h)$ be an operator of the form~\eqref{e:B-defined-general},
satisfying the assumptions~\eqref{e:FUP-Theta-chi}, \eqref{e:phase-nondegenerate},
and~\eqref{e:phase-nonlinear}.
Assume that $h\in (0,1]$ and $X,Y\subset\mathbb R$ are $\delta$-dimensional AD regular sets
on scales $h^\rho$ to~1 with constant $C_{\AD}$. Then we have
\begin{equation}
  \label{e:FUP-intro-general}
\|\indic_X \mathcal B(h)\indic_Y\|_{L^2(\mathbb R)\to L^2(\mathbb R)}
\leq C h^{\beta}
\end{equation}
where the constant $C$ depends only on $\delta,\epsilon,C_{\AD},\Theta,\chi$,
but not on~$X,Y,h$.
\end{theorem}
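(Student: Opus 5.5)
We plan to prove Theorem~\ref{t:FUP-intro-general} by a $TT^*$ argument that turns the operator bound into a counting problem for tubes, and then to use the nonlinear phase condition~\eqref{e:phase-nonlinear} to show the tube family is ``curved'', so that a two-ends Furstenberg estimate of O'Regan--Wu--Yi applies.

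\textbf{Step 1: discretize.} Fix an intermediate scale $h^{1/2}$. Cover $X$ and $Y$ by intervals of length $h^{1/2}$; AD regularity gives about $h^{-\delta/2}$ of them on each side. Within these, cover $X$ and $Y$ by intervals of length $h^{\rho}\approx h$. We then split $\indic_X\mathcal B(h)\indic_Y$ into pieces $\indic_{I}\mathcal B(h)\indic_{J}$ with $|I|=|J|=h^{1/2}$. On such a piece we linearize $\Theta$ around the center $(x_I,y_J)$:
\[
\Theta(x,y)\approx \Theta(x_I,y_J)+\partial_x\Theta\,(x-x_I)+\partial_y\Theta\,(y-y_J)+\partial_x\partial_y\Theta(x_I,y_J)\,(x-x_I)(y-y_J).
\]
The error is $O(h^{3/2})$, which is $o(h)$. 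Up to unimodular multipliers, each piece is therefore a rescaled Fourier transform at dilation $a_{IJ}=\partial_x\partial_y\Theta(x_I,y_J)$. It maps the $h$-scale structure of $X\cap I$ to frequency sets.

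\textbf{Step 2: reduce to an incidence/energy bound.} We bound $\|\indic_X\mathcal B\indic_Y f\|^2$ by expanding over pairs of blocks. The nondegeneracy condition~\eqref{e:phase-nondegenerate} and nonstationary phase give almost-orthogonality in the gradient variables: block $(I,J)$ contributes only to phase-space points near the graph $\{\xi=\partial_x\Theta\}$. The norm is then controlled by the $\delta$-dimensional ``trivial'' count $h^{1/2-\delta}$ times a gain. That gain must come from showing that few of the $h$-boxes in $X\times Y$ line up with the linearized structures.

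We encode this as follows. Each $h$-scale point of $Y$ determines a curve $x\mapsto\partial_y\Theta(x,y)$ in the $(x,\eta)$-plane, thickened to a tube. The set $X$, viewed at these scales, is a $\delta$-dimensional family. We aim for an estimate of the form: the number of incidences between a $\delta$-dimensional Katz--Tao family of tubes and a $\delta$-dimensional set of squares beats the trivial bound by $h^{c}$. Condition~\eqref{e:phase-nonlinear} says precisely that the dilation $\log|a_{IJ}|$ is not of the form $f(x)+g(y)$. This prevents a single change of variables from straightening all the tubes into lines with a product structure; the ``Fourier transform case'' $\Theta=-xy$ is exactly the one where the tube family is degenerate.

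\textbf{Step 3: the main obstacle.} The crucial and hardest step is to prove the incidence bound via the two-ends Furstenberg theorem~\cite{ORegan-Wu-Yi}. This requires two things:
\begin{itemize}
\item verifying that the induced tube family satisfies the Katz--Tao spacing condition and the two-ends condition, using AD regularity of $Y$ and $X$ on the relevant scales (we expect to use Lemma~\ref{l:AD-nbhd} for the neighborhoods);
\item showing that~\eqref{e:phase-nonlinear} rules out concentration of the tubes in a thin ``bush'' or ``grain''.
\end{itemize}
The plan is to change variables locally so that the curves $\partial_y\Theta(\cdot,y)$ become lines. We then compute that the second-order obstruction to simultaneously linearizing nearby families is $\partial_x\partial_y\log|\partial_x\partial_y\Theta|$, so nonvanishing gives genuine dispersion of directions among tubes through a point.

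Feeding the resulting Furstenberg exponent into the $TT^*$ bookkeeping yields the exponent $\tfrac78(\tfrac12-\delta)+\tfrac1{32}\delta$. The $\epsilon$-losses are absorbed into $\beta$ and into $\rho=1-\epsilon$. The restriction $\delta\in[\tfrac25,\tfrac{14}{27}]$ is where the Furstenberg bound improves on the trivial one and stays in its valid range. We expect the exponent arithmetic, together with the pigeonholing needed to extract a uniform two-ends subfamily, to be delicate but routine once the geometric incidence input is in place.
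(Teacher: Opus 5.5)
Your outline names the right external input (Theorem~\ref{t:OWY}) and correctly senses that~\eqref{e:phase-nonlinear} is what separates this problem from the Fourier case $\Theta=-xy$. However, the step that converts the operator norm into a Furstenberg-type incidence problem is missing, and the incidence problem you describe is not the one that controls the norm. Expanding $\|\indic_X\mathcal B(h)\indic_Y f\|^2$ over pairs of blocks only couples two points $y,y'$, through the frequency $(\partial_x\Theta(x,y)-\partial_x\Theta(x,y'))/h$. That yields almost orthogonality, which is a localization statement carrying no gain (and is not even available against the rough cutoff $\indic_X$ below scale $h^{1/2}$). The gain lives at the fourth moment. The paper uses nonstationary phase on $h^{1/2}$-intervals to bound $\|\indic_X\mathcal B(h)\indic_Y\|_{L^\infty\to L^4}^4$ by $h^{-2}$ times the measure of $(X(h^{1/2})\times Y^4)\cap\mathcal E_{h^{1/2-\epsilon}}(\Theta)$. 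This is the set of $(x,y_1,y_2,y_1',y_2')$ with $\partial_x\Theta(x,y_1)+\partial_x\Theta(x,y_2)\approx\partial_x\Theta(x,y_1')+\partial_x\Theta(x,y_2')$. It then interpolates via $\|\cdot\|_{L^2\to L^2}\le|X|^{3/8}\|\cdot\|_{L^\infty\to L^4}^{1/2}\|\cdot\|_{L^1\to L^\infty}^{1/2}$. Finally it needs the energy bound $|(X\times Y^4)\cap\mathcal E_r(\Theta)|\lesssim r^{5-\frac72\delta-\epsilon}$, a gain of $r^{\delta/2}$ over the trivial $r^{5-4\delta}$. With $r\approx h^{1/2}$ this arithmetic gives exactly $\frac7{16}-\frac{27}{32}\delta=\frac78(\frac12-\delta)+\frac1{32}\delta$, and the range $[\frac25,\frac{14}{27}]$ is just where this beats $\max(0,\frac12-\delta)$. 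Your plan asserts the exponent without any computation capable of producing it.

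Correspondingly, the curves must be $F_{\tilde y}(x)=\partial_x\Theta(x,y_1)+\partial_x\Theta(x,y_2)$, indexed by pairs $\tilde y\in Y^2$. This is a $2\delta$-dimensional family with rectangular Katz--Tao exponent $2\delta$, not $\delta$. The curves are incident to $r$-squares over $X$, and the energy is controlled by $\sum_{\mathbf p}W(\mathbf p)^2$. Your curves $x\mapsto\partial_y\Theta(x,y)$, one per $y\in Y$, form only a one-parameter family. They are also not gauge invariant: replacing $\Theta$ by $\Theta+b(y)$ leaves the norm unchanged but shifts each curve by $b'(y)$, so incidences with any fixed point set cannot encode the norm. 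The nonlinear condition enters precisely through the pair family. In the normal form $\Theta=xy+x^2y^2+\mathcal O(|(x,y)|^5)$ one has $(F_{\tilde y}(x),\partial_xF_{\tilde y}(x))\approx(y_1+y_2,2(y_1^2+y_2^2))$, a diffeomorphism in $(y_1+y_2,(y_1-y_2)^2)$, which gives the transversality Theorem~\ref{t:OWY} needs. For $\Theta=-xy$, by contrast, the family collapses to the horizontal lines $F_{\tilde y}\equiv-(y_1+y_2)$. Because the Jacobian is $\sim|y_1-y_2|$, three further steps are required. One must decompose dyadically in $|y_1-y_2|$ and pigeonhole in the multiplicity of the discretized map $\tilde y\mapsto F_{\tilde y}$. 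One must then prove the rectangular Katz--Tao bound through the lines-and-circles/moderate-region geometry. This is where most of the work lies, and your bush/grain and two-ends discussion does not substitute for it.
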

%%%%%%%%%%%%%%%%%%%%%%%%%%%%%%%%%%%%%%%%%%%%%%%%%%%%%%%%%%%%%%%%%%%%%%%%%%%%%%%%
In~\S\ref{s:reductions}, we reduce Theorem~\ref{t:FUP-intro-general}
to an additive energy bound, Proposition~\ref{l:additive-bound};
this bears some similarities with the reduction in~\cite[\S5]{hgap} but the additive
quantity used is different.
Proposition~\ref{l:additive-bound} is proved in~\S\ref{s:additive-bound}. The main ingredient
is the recent two-ends Furstenberg inequality of O'Regan--Wu--Yi~\cite{ORegan-Wu-Yi},
which in turn builds on the work of Demeter--Wang~\cite{Demeter-Wang}
and Wang--Wu~\cite{Wang-Wu-1,Wang-Wu-2}.

We emphasize that Theorem~\ref{t:FUP-intro-general} cannot hold for the Fourier
phase $\Theta(x,y)=-xy$. Indeed, in this setting Dyatlov--Jin~\cite[Proposition~3.17]{oqm}
give for any $\delta\in [0,\tfrac12]$ an example of a sequence of $\delta$-dimensional AD regular sets $X_n,Y_n$ for which the norm~\eqref{e:FUP-intro-general} is $\gtrsim h^{\beta_n}$
for some $\beta_n\to \tfrac 12-\delta$. In particular, for $\delta=\frac12$ 
we can have $\beta_n\sim (\log C_{\AD,n})^{-1}$. 
Kangabire~\cite[Theorem~1]{Kangabire-FUP} gives an example with similar properties
when $\delta\in [\frac12,1]$. That is, the fractal uncertainty exponent in the case
of the Fourier transform has to depend on more than just the dimensions of the sets.

%%%%%%%%%%%%%%%%%%%%%%%%%%%%%%%%%%%%%%%%%%%%%%%%%%%%%%%%%%%%%%%%%%%%%%%%%%%%%%%%
\subsection{Brief overview of history}
\label{s:history}

We provide a very brief overview of the history of the spectral gap question.
Two classical spectral gaps are the \emph{Lax--Phillips} spectral gap $\beta=0$
and the \emph{Patterson--Sullivan} spectral gap $\beta=\tfrac12-\delta$~\cite{Patterson,Sullivan}.
There are various improvements over these gaps, due to
Naud~\cite{Naud-Gap}, Dyatlov--Zahl~\cite{hgap},
Bourgain--Dyatlov~\cite{fullgap,hyperfup}, Dyatlov--Jin~\cite{regfup}, and Jin--Zhang~\cite{Jin-Zhang},
 which we summarize in the table below.
Below `Range of $\delta$' refers to the range of the values of $\delta$ for which the improvement
over the standard gap, namely $\beta-\max(0,\tfrac12-\delta)$, is positive,
and `Improvement' refers to the size of that improvement.
Many results that apply to $\delta=\tfrac12$ also apply to nearby values of~$\delta$,
but we choose to replace the ranges by just $\tfrac12$ to simplify the table below.
Below $\mathbf k$ refers to an unspecified small positive universal constant
and $\mathbf f(\bullet)$ refers to an unspecified function of the objects in the parentheses. 
\smallskip
\def\xstrut{\vrule width0pt height13pt depth6pt}
\begin{center}
\begin{tabular}{|l|l|l|}
\hline
\xstrut Paper & Range of $\delta$ & Improvement $\beta-\max(0,\tfrac12-\delta)$ \\ 
\hline\hline
\xstrut \cite{Naud-Gap} &  $(0,\tfrac12]$ & Unspecified \\
\hline
\xstrut \cite{hgap} & $\tfrac12$ & $\exp(-\mathbf k^{-1}(1+\log^{14}C_{\AD}))$ \\
\hline
\xstrut \cite{fullgap} & $[\tfrac12,1)$ & $\mathbf f(\delta,C_{\AD})$ \\
\hline
\xstrut \cite{Jin-Zhang} & $[\tfrac12,1)$ & $\exp\big[ -\exp\big( \mathbf k^{-1} (C_{\AD}(1-\delta)^{-1})^{\mathbf k^{-1}(1-\delta)^{-2}}\big)\big]$ \\
\hline
\xstrut \cite{regfup} & $(0,\tfrac12]$ & $(5C_{\AD})^{-{160\over \delta(1-\delta)}}$ \\
\hline
\xstrut \cite{hyperfup} & $(0,\tfrac12]$ & $\mathbf f(\delta)$ \\
\hline
\xstrut Theorem~\ref{t:gap} & $\tfrac12$ & $\tfrac{1}{64}-$ \\
\hline
\end{tabular}
\end{center}
\medskip
All the results above except~\cite{Naud-Gap} prove a fractal uncertainty principle.
All of these except Theorem~\ref{t:gap} work also for the Fourier phase $\Theta(x,y)=-xy$.
The only universal bounds above (that is, $\beta$ depending only on $\delta$)
are Theorem~\ref{t:gap} and the result of~\cite{hyperfup}. The latter uses
a Fourier decay property for the limit sets $\Lambda_\Gamma$ which relies
on their structure beyond AD regularity (more precisely, on the nonlinearity of the transformations
in the group $\Gamma$).
We mention here also the recent paper by Lequen--Sahlsten~\cite{Lequen-Sahlsten}
on Fourier decay for Schottky limit sets.

We finally mention the conjecture of Jakobson--Naud~\cite{Jakobson-Naud}
that every convex co-compact hyperbolic surface has an essential spectral
gap of size $\beta=\tfrac12-\tfrac12\delta$. We refer to~\cite[\S16.3.2]{Borthwick-Book}
for numerical evidence regarding essential spectral gaps.

\noindent\textbf{Acknowledgements.}
The idea of using projection theory together with the nonlinear phase condition~\eqref{e:phase-nonlinear}
to improve the known fractal uncertainty principle, as well as the content of \S\S\ref{s:basics}--\ref{s:reductions}, is entirely due to the authors. Part of these sections is copied
from the work in progress~\cite{ehalf}; if the present paper is submitted for publication
after~\cite{ehalf} is released, then this part will be replaced by references.
For~\S\ref{s:additive-bound} we used OpenAI Codex to find a result in projection theory
which works easiest to get the needed additive energy bound. Given the statement
of Proposition~\ref{l:additive-bound} and the reference~\cite{ORegan-Wu-Yi},
it found~\cite[Theorem~4.4]{ORegan-Wu-Yi}
and gave a first detailed sketch of the proof of Proposition~\ref{l:additive-bound} using it.
The final version of this paper is entirely human written and human verified.
Alex Cohen was supported by a Clay Research Fellowship. Semyon Dyatlov was supported by
the NSF grant DMS-2400090, the Simons Fellowship SFI-MPS-SFM-00005584,
and has enjoyed the hospitality of the Mathematical Sciences Research Institute in Berkeley, California, during Fall 2024 (supported by the NSF grant DMS-1928930).

%%%%%%%%%%%%%%%%%%%%%%%%%%%%%%%%%%%%%%%%%%%%%%%%%%%%%%%%%%%%%%%%%%%%%%%%%%%%%%%%
%%%%%%%%%%%%%%%%%%%%%%%%%%%%%%%%%%%%%%%%%%%%%%%%%%%%%%%%%%%%%%%%%%%%%%%%%%%%%%%%
\section{Preliminaries}
\label{s:basics}

%%%%%%%%%%%%%%%%%%%%%%%%%%%%%%%%%%%%%%%%%%%%%%%%%%%%%%%%%%%%%%%%%%%%%%%%%%%%%%%%
\subsection{Basic concepts}

We first introduce some basic notation:
\begin{itemize}
\item For a finite set $X$, denote by $\#(X)$ its cardinality.
\item We use the sum set notation for $X,Y\subset\mathbb R^d$:
$$
X+Y:=\{x+y\mid x\in X,\ y\in Y\}.
$$
\item For $r>0$, an \emph{$r$-cube} in $\mathbb R^d$ is a translate of $[0,r]^d$.
If $d=1$, we call an $r$-cube an \emph{$r$-interval}. If $d=2$, it is often called an \emph{$r$-square}.
\item All the subsets of $\mathbb R^d$ we use will be Borel measurable.
\item For $X\subset\mathbb R^d$, denote by $|X|$ its Lebesgue measure.
\item For $X\subset\mathbb R^d$ and $r>0$, define the quantities
\begin{equation}
  \label{e:M-r-X-def}
|X|_r \in [0,\infty],\quad
\mathbf M_r(X)\in [0,r^d]
\end{equation}
as follows: $|X|_r$ is the minimal number of $r$-cubes needed to cover $X$,
and $\mathbf M_r(X)$ is the supremum of $|X\cap Q|$ over all $r$-cubes $Q\subset\mathbb R^d$.
Define also the $r$-neighborhood
\begin{equation}
  \label{e:X-r-def}
X(r):=X+[-r,r]^d.
\end{equation}
\item For $r>0$, define the grid of $r$-intervals as
$$
\mathcal D_r(\mathbb R):=\big\{[jr,(j+1)r]\colon j\in\mathbb Z\big\}.
$$
Using these, we define the grid of $r$-cubes in $\mathbb R^d$ as
\begin{equation}
  \label{e:D-r-def}
\mathcal D_r(\mathbb R^d):=\{\mathbf x_1\times\cdots\times \mathbf x_d\colon \mathbf x_1,\dots,\mathbf x_d\in\mathcal D_r(\mathbb R)\}.
\end{equation}
Note that the cubes in $\mathcal D_r(\mathbb R^d)$ are nonoverlapping (i.e. the Lebesgue measure of the
intersection of any two distinct cubes is 0) and
$$
\mathbb R^d = \bigcup_{\mathbf p\in\mathcal D_r(\mathbb R^d)}\mathbf p.
$$
For $X\subset\mathbb R^d$, denote by $\mathcal D_r(X)$ the set of grid cubes intersecting $X$:
\begin{equation}
  \label{e:D-r-X-def}
\mathcal D_r(X):=\{\mathbf p\in\mathcal D_r(\mathbb R^d)\colon \mathbf p\cap X\neq\emptyset\}.
\end{equation}
\item We say $r$ is a \emph{dyadic} number if $r=2^{n}$ for some $n\in\mathbb Z$.
%\item If $X\subset\mathbb R^d$ and $r>0$, we say that $X$ is \emph{$r$-separated}
%if for each $x,y\in X$ with $x\neq y$ we have $|x-y|>r$.
\end{itemize}
%%%%%%%%%%%%%%%%%%%%%%%%%%%%%%%%%%%%%%%%%%%%%%%%%%%%%%%%%%%%%%%%%%%%%%%%%%%%%%%%
We record here some basic properties of the above objects:
\begin{itemize}
\item If $X\subset Y\subset\mathbb R^d$ and $r>0$ then
\begin{equation}
  \label{e:monotone-set}
|X|_r\leq |Y|_r,\quad
\mathbf M_r(X)\leq \mathbf M_r(Y).
\end{equation}
\item If $0<r_1\leq r_2$ and $X\subset \mathbb R^d$ then
\begin{align}
  \label{e:r-cover-monotone}
|X|_{r_2}
&\leq |X|_{r_1}\leq \Big\lceil{r_2\over r_1}\Big\rceil^d |X|_{r_2},
\\
  \label{e:M-r-monotone}
\mathbf M_{r_1}(X)
&\leq \mathbf M_{r_2}(X)\leq \Big\lceil{r_2\over r_1}\Big\rceil^d\mathbf M_{r_1}(X).
\end{align}
Indeed, each $r_1$-cube is contained in some $r_2$-cube, and each $r_2$-cube can be covered by $\lceil r_2/r_1\rceil^d$ many $r_1$-cubes.
\item For each $X\subset\mathbb R^d$ and $r_1,r_2>0$ we have
\begin{equation}
  \label{e:X-r-nbhd}
|X(r_2)|_{r_1+2r_2}\leq |X|_{r_1}.
\end{equation}
Indeed, the $r_2$-neighborhood of an $r_1$-cube is an $r_1+2r_2$-cube.
\item For each $X\subset\mathbb R^d$ and $r>0$ we have
\begin{align}
  \label{e:volume-covering-M}
|X|&\leq |X|_r\,\mathbf M_r(X),
\\
  \label{e:volume-nbhd}
 (r/2)^d|X|_r&\leq |X(r)|\leq (3r)^d|X|_r,
\\
  \label{e:D-r-bound}
|X|_r&\leq \#(\mathcal D_r(X))\leq 3^d|X|_r
\end{align}
To see the first bound in~\eqref{e:volume-nbhd}, let
$x_1,\dots,x_L\in X$ be a maximal set such that
the $r/2$-cubes centered at $x_j$ do not intersect. These
cubes are contained in~$X(r)$, so comparing the volume we see that
$L(r/2)^d\leq |X(r)|$.
On the other hand, $X$ is covered by the $r$-cubes centered at $x_j$,
so $|X|_r\leq L$.
 
To see the second bound in~\eqref{e:volume-nbhd}, note that
$X$ can be covered by $|X|_r$ many $r$-cubes, so $X(r)$
can be covered by $|X|_r$ many $3r$-cubes.

To see the bound~\eqref{e:D-r-bound}, note that
$$
X\ \subset\ \bigcup_{\mathbf p\in\mathcal D_r(X)}\mathbf p\ \subset\ X(r).
$$
The first containment gives the first bound in~\eqref{e:D-r-bound}
and the second containment together with~\eqref{e:volume-nbhd} gives the second bound.
%\item 
%If $N\geq 3$ is an integer and $X\subset\mathbb R^d$ is $r\sqrt{d}$-separated,
%then we can write
%\begin{equation}
%  \label{e:separated-decomposition}
%X=\bigcup_{\vec n\in \{0,\dots,N-1\}^d} X_{\vec n}
%\end{equation}
%where each $X_{\vec n}$ is $(N-2)r$-separated. To see this,
%define for each $\vec m\in \mathbb Z^d$ and $\vec n\in \{0,\dots,N-1\}^d$ 
%the grid $r$-cube
%$$
%\mathbf p_{\vec m,\vec n}:=r(N\vec m+\vec n)+[0,r]^d,\quad
%\mathbb R^d=\bigcup_{\vec m\in\mathbb Z^d\atop\vec n\in \{0,\dots,N-1\}^d}
%\mathbf p_{\vec m,\vec n}
%$$
%and put
%$$
%X_{\vec n}:=X\cap \bigcup_{\vec m\in\mathbb Z^d} \mathbf p_{\vec m,\vec n}.
%$$
%Then~\eqref{e:separated-decomposition} holds. Each $\mathbf p_{\vec m,\vec n}$
%has diameter $r\sqrt{d}$ and thus contains at most 1 point in~$X$.
%For two different values $\vec m,\vec m'$, the cubes $\mathbf p_{\vec m,\vec n}$
%and $\mathbf p_{\vec m',\vec n}$ are more than $(N-2)r$ far away from each other. Thus each set $X_{\vec n}$
%is $(N-2)r$-separated.
\end{itemize}

%%%%%%%%%%%%%%%%%%%%%%%%%%%%%%%%%%%%%%%%%%%%%%%%%%%%%%%%%%%%%%%%%%%%%%%%%%%%%%%%
\subsection{AD regular sets}

We now give some basic properties of AD regular sets, mostly from~\cite{fullgap}.
We start with a covering bound
which is a consequence of~\cite[Lemma~2.8]{fullgap}:
%%%%%%%%%%%%%%%%%%%%%%%%%%%%%%%%%%%%%%%%%%%%%%%%%%%%%%%%%%%%%%%%%%%%%%%%%%%%%%%%
\begin{lemma}
  \label{l:AD-cover}
Let $X\subset\mathbb R$ be a $\delta$-dimensional AD regular set on scales $r_1$ to $r_0$ with constant $C_{\AD}$, $I$ be an interval, $r>0$, and assume that $r_1\leq r\leq |I|\leq r_0$. Then
\begin{equation}
  \label{e:AD-cover}
|X\cap I|_r\leq 12C_{\AD}^2 \Big({|I|\over r}\Big)^\delta.
\end{equation}
\end{lemma}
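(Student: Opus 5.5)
The plan is a direct packing argument: bound the covering number by a maximal separated subset, then bound the size of that subset by comparing $\mu_X$-masses, using the upper AD bound on a slightly enlarged interval and the lower AD bound on small intervals centered in $X$.

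\emph{Step 1: separated subset.} If $X\cap I=\emptyset$ there is nothing to prove. Otherwise, choose a maximal set of points $x_1,\dots,x_L\in X\cap I$ with $|x_j-x_k|>r$ for $j\neq k$; such a set is finite because $I$ is bounded. By maximality, every point of $X\cap I$ lies within distance $r$ of some $x_j$. Hence $X\cap I$ is covered by the intervals $[x_j-r,x_j+r]$, each of which is a union of two $r$-intervals, and so
\[
|X\cap I|_r\leq 2L.
\]

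\emph{Step 2: disjoint small intervals.} Let $J_j$ be the interval of length $r$ centered at $x_j$. These intervals are pairwise disjoint up to endpoints, since the centers are more than $r$ apart. Because $|J_j|=r\in[r_1,r_0]$ and $x_j\in X$, the lower AD bound gives $\mu_X(J_j)\geq C_{\AD}^{-1}r^\delta$. The endpoints carry no $\mu_X$-mass: a single point is contained in intervals of arbitrarily small length, so the upper AD bound at scale $r_1$ makes its mass at most $C_{\AD}r_1^\delta$, which is $0$ when $r_1=0$. If $r_1>0$, I would simply shrink each $J_j$ to a half-open interval of length $r$; this leaves the lower bound intact. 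Thus the masses add.

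\emph{Step 3: comparison with a large interval.} All the $J_j$ lie in the interval $\tilde I$ of length $|I|+r\leq 2|I|$ with the same center as $I$. If $2|I|\leq r_0$, the upper AD bound gives
\[
L\,C_{\AD}^{-1}r^\delta\leq \mu_X(\tilde I)\leq C_{\AD}(2|I|)^\delta\leq 2C_{\AD}|I|^\delta,
\]
so $L\leq 2C_{\AD}^2(|I|/r)^\delta$ and $|X\cap I|_r\leq 4C_{\AD}^2(|I|/r)^\delta$.

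The main obstacle is the scale restriction $|\tilde I|\leq r_0$, which can fail when $|I|$ is close to $r_0$. In that case I would split $\tilde I$ into at most three intervals, each of length at most $|I|$, and apply the upper bound to each. This gives $\mu_X(\tilde I)\leq 3C_{\AD}|I|^\delta$, and tracking the constants yields
\[
|X\cap I|_r\leq 6C_{\AD}^2\Big({|I|\over r}\Big)^\delta,
\]
comfortably within the stated $12C_{\AD}^2$. There is one more subtlety: the pieces have length at most $|I|$, but the upper AD bound also needs each piece to have length at least $r_1$. If a piece is shorter than $r_1$, I would enlarge it to length $r_1\leq r\leq|I|$ and use monotonicity of $\mu_X$. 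The generous constant $12$ absorbs these adjustments, and this matches the form in which~\cite[Lemma~2.8]{fullgap} is stated.
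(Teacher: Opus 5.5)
Your argument is correct. The paper gives no proof in the text; it deduces Lemma~\ref{l:AD-cover} from \cite[Lemma~2.8]{fullgap}. You instead run the standard packing argument directly: take a maximal $r$-separated subset of $X\cap I$, apply the lower AD bound on the disjoint $r$-intervals centered at its points, and apply the upper AD bound on the enlarged interval $\tilde I$. The citation buys brevity. Your route makes the lemma self-contained and gives the better constant $6C_{\AD}^2$ in place of $12C_{\AD}^2$.

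Two small simplifications are available. First, since $|x_j-x_k|>r$, the closed intervals $J_j=[x_j-\tfrac r2,x_j+\tfrac r2]$ are already pairwise disjoint. You can therefore drop the discussion of endpoint mass, and also the passage to half-open intervals, which Definition~\ref{d:ADR} does not obviously cover anyway.

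Second, split $\tilde I$ into its two halves, each of length $\tfrac12(|I|+r)\in[r,|I|]\subset[r_1,r_0]$. This removes both the case distinction on whether $2|I|\le r_0$ and the need to enlarge short pieces to length $r_1$. It gives $\mu_X(\tilde I)\le 2C_{\AD}|I|^\delta$, hence $|X\cap I|_r\le 4C_{\AD}^2(|I|/r)^\delta$ in all cases.
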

%%%%%%%%%%%%%%%%%%%%%%%%%%%%%%%%%%%%%%%%%%%%%%%%%%%%%%%%%%%%%%%%%%%%%%%%%%%%%%%%
A further consequence of this is
%%%%%%%%%%%%%%%%%%%%%%%%%%%%%%%%%%%%%%%%%%%%%%%%%%%%%%%%%%%%%%%%%%%%%%%%%%%%%%%%
\begin{lemma}
  \label{l:AD-cover-2}
Let $X\subset \mathbb R$ be a $\delta$-dimensional AD regular set on scales $r\in (0,1]$ to~1
with constant $C_{\AD}$ and $I$ be an interval such that $|I|\leq 100$. Then
\begin{equation}
  \label{e:AD-cover-2}
|X\cap I|_r\leq 1200 C_{\AD}^2\Big({|I|\over r}+1\Big)^\delta.
\end{equation}
\end{lemma}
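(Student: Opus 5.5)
The plan is to split into cases according to the size of $|I|$ compared to $r$ and to~1, so that Lemma~\ref{l:AD-cover} applies directly or after subdividing $I$. Here the scales are $r_1=r$, $r_0=1$, so Lemma~\ref{l:AD-cover} applies to any interval $J$ with $r\le |J|\le 1$, giving $|X\cap J|_r\le 12C_{\AD}^2(|J|/r)^\delta$.

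\emph{Case 1: $|I|<r$.} Then $I$ is contained in a single $r$-interval, so $|X\cap I|_r\le 1\le 1200C_{\AD}^2(|I|/r+1)^\delta$, using $C_{\AD}\ge1$.

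\emph{Case 2: $r\le |I|\le 1$.} Lemma~\ref{l:AD-cover} gives directly $|X\cap I|_r\le 12C_{\AD}^2(|I|/r)^\delta$, which is bounded by the right-hand side of~\eqref{e:AD-cover-2}.

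\emph{Case 3: $1<|I|\le 100$.} We cover $I$ by $N:=\lceil |I|\rceil\le 100$ intervals $J_1,\dots,J_N$ of length exactly~1. Since $r\le 1$, Lemma~\ref{l:AD-cover} applies to each $J_k$, giving $|X\cap J_k|_r\le 12C_{\AD}^2 r^{-\delta}$. Subadditivity of $|\cdot|_r$ under unions, together with the monotonicity~\eqref{e:monotone-set}, then yields
$$
|X\cap I|_r\le \sum_{k=1}^N|X\cap J_k|_r\le 1200\,C_{\AD}^2\,r^{-\delta}\le 1200\,C_{\AD}^2\Big({|I|\over r}+1\Big)^\delta,
$$
where the last step uses $|I|>1$, so that $|I|/r+1\ge r^{-1}$.

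None of the steps is a real obstacle. The only point requiring care is Case~3, where we must make sure the covering intervals lie within the scale range $[r,1]$ of Lemma~\ref{l:AD-cover}; choosing them of length exactly~1 guarantees this, since $r\le1$. The constant $1200=100\cdot 12$ comes precisely from this covering.
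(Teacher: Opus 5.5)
Your proposal is correct and follows the paper's proof essentially verbatim: the same three cases $|I|<r$, $r\le |I|\le 1$, and $1<|I|\le 100$, with the last handled by covering $I$ with at most $100$ unit intervals and applying Lemma~\ref{l:AD-cover} to each. Your extra remarks (using $C_{\AD}\ge 1$ in Case~1 and $|I|/r+1\ge r^{-1}$ in Case~3) just make explicit what the paper leaves implicit.
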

%%%%%%%%%%%%%%%%%%%%%%%%%%%%%%%%%%%%%%%%%%%%%%%%%%%%%%%%%%%%%%%%%%%%%%%%%%%%%%%%
\begin{proof}
We consider three cases:
\begin{enumerate}
\item $|I|<r$. Then $|X\cap I|_r\leq 1$ and~\eqref{e:AD-cover-2} follows.
\item $r\leq |I|\leq 1$. Then~\eqref{e:AD-cover-2} follows from Lemma~\ref{l:AD-cover}.
\item $1< |I|\leq 100$. Then we can cover $I$ by 100 intervals $I_1,\dots,I_{100}$
of length~1 each and estimate $|X\cap I_k|_r$ by Lemma~\ref{l:AD-cover}:
$$
|X\cap I|_r\leq \sum_{k=1}^{100}|X\cap I_k|_r
\leq 1200C_{\AD}^2\Big({1\over r}\Big)^\delta
$$
which gives~\eqref{e:AD-cover-2}.
\end{enumerate}
\end{proof}
%%%%%%%%%%%%%%%%%%%%%%%%%%%%%%%%%%%%%%%%%%%%%%%%%%%%%%%%%%%%%%%%%%%%%%%%%%%%%%%%
The next lemma is~\cite[Lemma~2.3]{fullgap}, which gives AD regularity of neighborhoods
of AD regular sets:
%%%%%%%%%%%%%%%%%%%%%%%%%%%%%%%%%%%%%%%%%%%%%%%%%%%%%%%%%%%%%%%%%%%%%%%%%%%%%%%%
\begin{lemma}
  \label{l:AD-nbhd}
Assume that $X$ is a $\delta$-dimensional AD regular set on scales $r_1$ to $r_0\geq 2r_1$
with constant $C_{\AD}$, and fix $T\geq 1$. Then the neighborhood
$X(Tr_1)$ is $\delta$-dimensional AD regular on scales $2r_1$ to $r_0$ with constant
$4TC_{\AD}$.
\end{lemma}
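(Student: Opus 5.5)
We take the measure $\mu:=\mu_X$ from Definition~\ref{d:ADR} for $X$ and build a measure for $Y:=X(Tr_1)$ by thickening it. Concretely, set
$$
\mu_Y:=\mu*\Big(\tfrac{1}{2Tr_1}\indic_{[-Tr_1,Tr_1]}\,dx\Big),\qquad\text{so}\quad \mu_Y(I)=\frac{1}{2Tr_1}\int \big|I\cap [x-Tr_1,x+Tr_1]\big|\,d\mu(x).
$$
Since $\mu$ is supported on $X$, the measure $\mu_Y$ is supported on $X(Tr_1)$. It remains to check the upper and lower bounds for intervals $I$ with $2r_1\le |I|\le r_0$.

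\emph{Upper bound.} Only points $x$ in the enlarged interval $I':=I+[-Tr_1,Tr_1]$ contribute, and each contributes at most $\min(|I|,2Tr_1)/(2Tr_1)$. We split into two cases.
\begin{itemize}
\item If $|I|\ge Tr_1$, then $|I'|\le 3|I|$. Covering $I'$ by three intervals of length $|I|$ (which lies in the admissible range $[r_1,r_0]$) gives $\mu_Y(I)\le \mu(I')\le 3C_{\AD}|I|^\delta$.
\item If $|I|<Tr_1$, then $|I'|\le 3Tr_1$ and $\mu_Y(I)\le \frac{|I|}{2Tr_1}\mu(I')$. Cover $I'$ by at most $\lceil 3Tr_1/|I|\rceil\le 4Tr_1/|I|$ intervals of length $|I|\in[r_1,r_0]$. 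This gives $\mu_Y(I)\le \frac{|I|}{2Tr_1}\cdot\frac{4Tr_1}{|I|}C_{\AD}|I|^\delta=2C_{\AD}|I|^\delta$.
\end{itemize}
Working directly with intervals of length $|I|$ avoids any need for $|I'|\le r_0$. In both cases the bound is at most $4TC_{\AD}|I|^\delta$.

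\emph{Lower bound.} Let $I$ be centered at $y\in Y$, and pick $x_0\in X$ with $|x_0-y|\le Tr_1$. Put $s:=\min(|I|,Tr_1)/2\ge r_1/2$.
\begin{itemize}
\item \emph{Case $|I|\le Tr_1$.} Consider $x\in J:=[x_0-s,x_0+s]$, an interval of length $2s\ge r_1$. For such $x$ the interval $[x-Tr_1,x+Tr_1]$ contains $[x_0-Tr_1+s,\,x_0+Tr_1-s]$, which covers the segment from $x_0$ to $y$ extended by $s$ beyond $y$. Hence it meets $I$ in a set of length at least $|I|/2$ (half of $I$ on the side toward $x_0$). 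Therefore $\mu_Y(I)\ge \frac{|I|}{4Tr_1}\mu(J)\ge \frac{|I|}{4Tr_1}C_{\AD}^{-1}|I|^\delta$, using $2s=|I|$.
\item \emph{Case $|I|>Tr_1$.} Now $2s=Tr_1$, and each $x\in J$ has $|I\cap[x-Tr_1,x+Tr_1]|\gtrsim Tr_1$, so $\mu_Y(I)\gtrsim C_{\AD}^{-1}(Tr_1)^\delta$. This is too small when $|I|\gg Tr_1$.
\end{itemize}
To handle the second case properly, use instead the interval $J'$ centered at $x_0$ of length $|I|/2$. For $|I|\ge 2Tr_1$ we have $|x_0-y|\le Tr_1\le |I|/2$, so the $Tr_1$-neighbourhood of $J'$ lies in $I$ up to harmless adjustments. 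Each $x\in J'$ then contributes at least $1/2$, giving $\mu_Y(I)\ge \frac12 C_{\AD}^{-1}(|I|/2)^\delta$. The intermediate range $Tr_1<|I|<2Tr_1$ is handled like the first case with $2s$ comparable to $|I|$.

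The main obstacle is this lower-bound bookkeeping: making sure the auxiliary intervals $J$, $J'$ have length in $[r_1,r_0]$ (this is where $|I|\ge 2r_1$ enters), and tracking the constant so that everything fits under $4TC_{\AD}$. In the first case the crude bound only gives a factor $|I|/(Tr_1)$, which is fine only when $|I|\sim Tr_1$. For smaller $I$ the fix is to choose $x\in[y-Tr_1+|I|/2,\,y+Tr_1-|I|/2]\cap[x_0-|I|/2,x_0+|I|/2]$. Each such $x$ has its window covering all of $I$ and contributes $\frac{|I|}{2Tr_1}$; this again yields only $\frac{|I|^{1+\delta}}{Tr_1}$. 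So for $|I|\ll Tr_1$ this convolution measure fails the lower bound as stated, since averaging is the wrong choice there. We therefore modify $\mu_Y$ to $\mu_Y:=\mu+\mu*(\text{average})$, or more cleanly take the maximum of the two regimes. The $\mu$ part is supported on $X\subset Y$, so support is preserved. For small $I$ centered at $y\in Y\setminus X$, we use instead the measure
$$
\nu:=\sum_{\mathbf p\in\mathcal D_{r_1}}\mu(\mathbf p)\,\frac{\indic_{Y\cap\mathbf p(Tr_1)}\,dx}{|Y\cap \mathbf p(Tr_1)|},
$$
which spreads mass at scale $r_1$ inside $Y$. Verifying the lower bound for this $\nu$ at scales $|I|\ge 2r_1$, together with the constant $4T$, is the step I expect to require the most care.
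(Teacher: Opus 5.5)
Your starting measure $\mu_Y=\mu*\big(\tfrac{1}{2Tr_1}\indic_{[-Tr_1,Tr_1]}\big)$ is the right one; this convolution is the standard construction behind the Bourgain--Dyatlov lemma that the paper quotes. Your upper bound is also correct. The gap is the lower bound, which the proposal never completes. You decide that this measure ``fails the lower bound as stated'' for $|I|\ll Tr_1$, abandon it, and end with $\mu+\mu*(\cdots)$ or with $\nu$, neither of which you verify. So as written nothing is proved.

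That decision is also mistaken. The loss factor $|I|/(Tr_1)$ you found is at least $2/T$, because only intervals with $|I|\ge 2r_1$ are considered, and the factor $T$ in the constant $4TC_{\AD}$ is there precisely to absorb it. Your $\nu$ spreads each $\mu(\mathbf p)$ over a window of length about $2Tr_1$, so at small scales it behaves like the convolution and would not have helped anyway.

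There are also concrete errors in the cases you did write. In the case $|I|\le Tr_1$, it is false that every $x\in J=[x_0-|I|/2,x_0+|I|/2]$ has $|I\cap[x-Tr_1,x+Tr_1]|\ge |I|/2$. If $x_0=y+Tr_1$ and $x=x_0+|I|/2$, the overlap is a single point. In the large-$|I|$ case, ``each $x\in J'$ contributes at least $1/2$'' fails for $2Tr_1\le |I|<4Tr_1$, since such $x$ need not lie in $I$.

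A clean way to finish uses Fubini: $\mu_Y(I)=\frac{1}{2Tr_1}\int_{-Tr_1}^{Tr_1}\mu(I-t)\,dt$.

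\emph{Upper bound.} Each $I-t$ is an interval of length $|I|\in[r_1,r_0]$, so $\mu_Y(I)\le C_{\AD}|I|^\delta$ directly, with no case split. This is sharper than your $3C_{\AD}$, which matters for landing exactly on $4TC_{\AD}$.

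\emph{Lower bound.} Let $I$ be centered at $y$ and pick $x_0\in X$ with $c:=y-x_0\in[-Tr_1,Tr_1]$. Then $I-t\supset[x_0-|I|/4,x_0+|I|/4]$ whenever $|t-c|\le |I|/4$. Since $c\in[-Tr_1,Tr_1]$, this set of $t$ meets $[-Tr_1,Tr_1]$ in measure at least $\min(|I|/4,Tr_1)\ge r_1/2$. The interval $[x_0-|I|/4,x_0+|I|/4]$ is centered at a point of $X$ and has length $|I|/2\in[r_1,r_0]$; this is exactly where $|I|\ge 2r_1$ is used. Hence $\mu_Y(I)\ge\frac{r_1/2}{2Tr_1}C_{\AD}^{-1}(|I|/2)^\delta\ge(8TC_{\AD})^{-1}|I|^\delta$, uniformly for small and large $|I|$.

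\emph{Constants.} Normalizing by $(Tr_1)^{-1}$ instead of $(2Tr_1)^{-1}$ gives upper constant $2C_{\AD}$ and lower constant $(4TC_{\AD})^{-1}$, which is the claimed $4TC_{\AD}$. The support property is as you noted.
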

%%%%%%%%%%%%%%%%%%%%%%%%%%%%%%%%%%%%%%%%%%%%%%%%%%%%%%%%%%%%%%%%%%%%%%%%%%%%%%%%
We now give~\cite[Lemma~2.4]{fullgap} which shows that AD regularity is preserved under bi-Lipschitz maps.
%%%%%%%%%%%%%%%%%%%%%%%%%%%%%%%%%%%%%%%%%%%%%%%%%%%%%%%%%%%%%%%%%%%%%%%%%%%%%%%%
\begin{lemma}
  \label{l:AD-diffeo}
Assume that $F:\mathbb R\to\mathbb R$ is a $C^1$ diffeomorphism such that
$C_F^{-1}\leq |\partial_x F|\leq C_F$ for some constant $C_F\geq 1$. Let
$X$ be a $\delta$-dimensional AD regular set on scales $r_1$ to $r_0\geq C_F^2 r_1$
with constant $C_{\AD}$.
Then $F(X)$ is a $\delta$-dimensional AD regular set on scales $C_Fr_1$ to $C_F^{-1}r_0$
with constant $C_FC_{\AD}$.
\end{lemma}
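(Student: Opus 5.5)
The plan is to take the AD regular measure $\mu_X$ for $X$ and push it forward by $F$: set $\mu_{F(X)}:=F_*\mu_X$. Since $F$ is a diffeomorphism, $\mu_{F(X)}(\mathbb R\setminus F(X))=\mu_X(\mathbb R\setminus X)=0$, so this measure is supported on $F(X)$. (Note that $F(X)$ is closed because $F$ is a homeomorphism of $\mathbb R$.) The rest of the argument checks the two-sided bounds in Definition~\ref{d:ADR}. The only tools needed are that $F$ is monotone and satisfies the bi-Lipschitz bounds $C_F^{-1}|a-b|\leq |F(a)-F(b)|\leq C_F|a-b|$, which follow from the mean value theorem.

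For the upper bound, take an interval $J$ with $|J|\in[C_Fr_1,C_F^{-1}r_0]$. Then $F^{-1}(J)$ is an interval $I$ with $C_F^{-1}|J|\leq |I|\leq C_F|J|$, so $|I|\in[r_1,r_0]$. Hence
$$\mu_{F(X)}(J)=\mu_X(I)\leq C_{\AD}|I|^\delta\leq C_{\AD}C_F^\delta|J|^\delta\leq C_FC_{\AD}|J|^\delta,$$
using $\delta\leq 1$ and $C_F\geq 1$.

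For the lower bound, let $J$ be centered at $F(x)$ with $x\in X$, and let $|J|$ lie in the same range. Consider the interval $I$ centered at $x$ of length $C_F^{-1}|J|$. By the Lipschitz bound, $F(I)$ lies within distance $\tfrac12 C_F\cdot C_F^{-1}|J|=\tfrac12|J|$ of $F(x)$, so $F(I)\subset J$. Also $|I|=C_F^{-1}|J|\in[r_1,C_F^{-2}r_0]\subset[r_1,r_0]$. Therefore
$$\mu_{F(X)}(J)\geq\mu_X(I)\geq C_{\AD}^{-1}C_F^{-\delta}|J|^\delta\geq (C_FC_{\AD})^{-1}|J|^\delta.$$

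The only point needing care is this scale bookkeeping: every preimage interval must have length inside $[r_1,r_0]$. The hypothesis $r_0\geq C_F^2r_1$ guarantees exactly that the new range $[C_Fr_1,C_F^{-1}r_0]$ is nonempty and maps into the old range. Beyond that there is no real obstacle.
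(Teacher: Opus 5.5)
Your proof is correct and is essentially the standard argument: the paper does not prove this lemma itself but quotes it as \cite[Lemma~2.4]{fullgap}, which rests on the same idea of pushing forward $\mu_X$ by $F$ and using the bi-Lipschitz bounds to keep every preimage interval within $[r_1,r_0]$. One small point: if $J$ is open, take $I$ open as well, so that $|F(y)-F(x)|<\tfrac12|J|$ for $y\in I$ and hence $F(I)\subset J$.
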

%%%%%%%%%%%%%%%%%%%%%%%%%%%%%%%%%%%%%%%%%%%%%%%%%%%%%%%%%%%%%%%%%%%%%%%%%%%%%%%%

The next lemma is~\cite[Lemma~2.2]{fullgap} which allows to increase the upper scale of AD regularity:
%%%%%%%%%%%%%%%%%%%%%%%%%%%%%%%%%%%%%%%%%%%%%%%%%%%%%%%%%%%%%%%%%%%%%%%%%%%%%%%%
\begin{lemma}
  \label{l:AD-upper}
Assume that $X$ is a $\delta$-dimensional AD regular set on scales $r_1$
to $r_0$ with constant $C_{\AD}$. Then for each $T\geq 1$,
$X$ is $\delta$-dimensional AD regular on scales $r_1$ to $Tr_0$
with constant $2TC_{\AD}$. 
\end{lemma}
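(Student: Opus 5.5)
We keep the same measure $\mu_X$ from Definition~\ref{d:ADR}; clearly it is still supported on $X$. The lower bound is also immediate for intervals with $|I|\in[r_1,r_0]$, and the whole task is to handle intervals with $r_0<|I|\le Tr_0$. We also check that the claimed constant $2TC_{\AD}\ge C_{\AD}$, so the inequalities already known on scales $[r_1,r_0]$ remain valid with the larger constant. (If $r_0=\infty$ there is nothing to prove.)

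For the upper bound, let $I$ be an interval with $r_0<|I|\le Tr_0$. We cover $I$ by $N:=\lceil |I|/r_0\rceil$ intervals of length $r_0$. Each of them has $\mu_X$-mass at most $C_{\AD}r_0^\delta$, so
$$
\mu_X(I)\le \Big\lceil {|I|\over r_0}\Big\rceil C_{\AD}r_0^\delta\le {2|I|\over r_0}C_{\AD}r_0^\delta
=2C_{\AD}\Big({|I|\over r_0}\Big)^{1-\delta}|I|^\delta\le 2T^{1-\delta}C_{\AD}|I|^\delta ,
$$
which is at most $2TC_{\AD}|I|^\delta$. Here we used that $\lceil s\rceil\le 2s$ for $s\ge1$, that $|I|/r_0\le T$, and that $1-\delta\in[0,1]$ with $T\ge1$.

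For the lower bound, let $I$ be centered at $x\in X$ with $r_0<|I|\le Tr_0$. Let $I'\subset I$ be the interval of length $r_0$ centered at $x$. By the original lower bound,
$$
\mu_X(I)\ge\mu_X(I')\ge C_{\AD}^{-1}r_0^\delta=C_{\AD}^{-1}\Big({r_0\over |I|}\Big)^\delta|I|^\delta\ge C_{\AD}^{-1}T^{-\delta}|I|^\delta\ge (2TC_{\AD})^{-1}|I|^\delta .
$$
The last step uses $T^{-\delta}\ge T^{-1}\ge (2T)^{-1}$.

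The only mildly delicate point is the counting of covering intervals in the upper bound: one must make sure that the ceiling $\lceil |I|/r_0\rceil$ does not cost more than a factor $2T$. This is exactly where the assumption $|I|>r_0$, i.e.\ $|I|/r_0\ge 1$, is used, since it gives $\lceil |I|/r_0\rceil\le 2|I|/r_0$. The rest of the argument is direct.
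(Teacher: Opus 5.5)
Your proof is correct: covering $I$ by $\lceil |I|/r_0\rceil\le 2|I|/r_0$ intervals of length $r_0$ gives the upper bound, and passing to the concentric subinterval of length $r_0$ gives the lower bound, with the constant checks done properly. The paper does not reprove this lemma but quotes it from~\cite[Lemma~2.2]{fullgap}; your argument is the standard one behind that statement, and your intermediate factor $T^{1-\delta}$ is even slightly sharper than the stated $2T$.
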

%%%%%%%%%%%%%%%%%%%%%%%%%%%%%%%%%%%%%%%%%%%%%%%%%%%%%%%%%%%%%%%%%%%%%%%%%%%%%%%%

Finally, we give~\cite[Lemma~2.7]{fullgap}, which shows how
to split an AD regular set into a union of smaller AD regular sets.
We specialize it to the case when $\delta\leq \frac23$.
%%%%%%%%%%%%%%%%%%%%%%%%%%%%%%%%%%%%%%%%%%%%%%%%%%%%%%%%%%%%%%%%%%%%%%%%%%%%%%%%
\begin{lemma}
  \label{l:AD-split}
Assume that $X$ is a $\delta$-dimensional AD regular set on scales $r_1$ to $r_0$ with constant
$C_{\AD}$, and assume that $\delta\leq \frac23$ and $(4C_{\AD})^6 r_1\leq r'\leq r_0$. Then there exists a collection of disjoint intervals $\mathcal J$ such that
$$
X=\bigsqcup_{J\in\mathcal J}(X\cap J);\quad
(4C_{\AD})^{-6}r'\leq |J|\leq r'\quad\text{for all }J\in\mathcal J,
$$
and each $X\cap J$ is $\delta$-dimensional AD regular on scales $r_1$ to $r'$ with constant
$4096 C_{\AD}^7$.
\end{lemma}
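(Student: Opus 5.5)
We plan to prove Lemma~\ref{l:AD-split} by a stopping-time (Whitney-type) construction that cuts $X$ at gaps: we look for gaps in $X$ at scale comparable to $r'$ and cut there, so that each piece is an isolated cluster. The basic fact we will use is a \emph{porosity} consequence of AD regularity combined with $\delta<1$. Take any interval $I$ of length $r\in[r_1,r_0]$ and split it into $K$ equal subintervals. If each of them met $X$, then centering intervals of length $r/K$ at points of $X$ and applying the lower bound of Definition~\ref{d:ADR} gives $\mu_X(I(r/K))\gtrsim K\cdot C_{\AD}^{-1}(r/K)^\delta$. The upper bound gives $\mu_X(I(r/K))\lesssim C_{\AD}r^\delta$. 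Together these force $K^{1-\delta}\lesssim C_{\AD}^2$. Since $\delta\le\frac23$, we have $1-\delta\ge \frac13$, so $K\sim (4C_{\AD})^6$ is enough to guarantee that some subinterval of length $r/K$ misses $X$. This is where the exponent $6$ and the hypothesis $\delta\le\frac23$ come from.

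The construction goes as follows. Tile $\mathbb R$ by consecutive intervals of length about $r'/2$. In each tile, use porosity (with $r=r'/2$) to choose a gap of length at least $(4C_{\AD})^{-6}r'$ that is disjoint from $X$, and cut at its midpoint. The resulting pieces $J$ lie between consecutive cut points, so each has length at most $r'$. A piece could be too short only if two cuts were close together. To avoid this, we choose the gaps in a consistent way, for instance in tiles of alternating parity, or by merging short pieces with a neighbour; either way every $J$ has length at least $(4C_{\AD})^{-6}r'$. Pieces not meeting $X$ are discarded. Since every cut point has distance $\ge \frac12(4C_{\AD})^{-6}r'$ from $X$, each $X\cap J$ is separated from $X\setminus J$ by such a buffer.

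It remains to verify AD regularity of $X\cap J$ on scales $r_1$ to $r'$ with the measure $\mu_X|_J$.
\begin{itemize}
\item \emph{Upper bound.} This is inherited directly from $\mu_X$.
\item \emph{Lower bound, small intervals.} If $I$ is centered at $x\in X\cap J$ with $|I|\le(4C_{\AD})^{-6}r'$, then $I\cap X\subset J$ by the buffer property, so $\mu_X|_J(I)=\mu_X(I)\ge C_{\AD}^{-1}|I|^\delta$.
\item \emph{Lower bound, large intervals.} If $|I|>(4C_{\AD})^{-6}r'$, apply the previous case to a shrunken interval of length $(4C_{\AD})^{-6}r'$ centered at $x$. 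This gives a bound $\gtrsim C_{\AD}^{-1}(4C_{\AD})^{-6\delta}r'^\delta\ge C_{\AD}^{-1}(4C_{\AD})^{-6}|I|^\delta$, since $|I|\le r'$.
\end{itemize}
The resulting constant is of the form $4^6C_{\AD}^7=4096C_{\AD}^7$, matching the statement.

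The main obstacle is the bookkeeping in the gap-selection step. We must ensure the pieces have length in the window $[(4C_{\AD})^{-6}r',r']$ with exactly the stated constants. The porosity step must also be done with the relevant interval lengths inside $[r_1,r_0]$, which is what the hypothesis $(4C_{\AD})^6r_1\le r'\le r_0$ is for. This is the content of \cite[Lemma~2.7]{fullgap}, and the proof would follow it.
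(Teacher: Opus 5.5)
Your outline is correct and is essentially the standard argument behind \cite[Lemma~2.7]{fullgap}. The paper gives no proof of its own and only cites that lemma, specialized to $\delta\le\frac23$. You correctly identify the porosity step, with $K^{1-\delta}\ge K^{1/3}$, as the source of the exponent $6$. You also get the final constant $4^6C_{\AD}^7$ right.

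The one step to correct is your remedy for short pieces, since either proposed fix would violate $|J|\le r'$. Cutting only in tiles of alternating parity, with tiles of length $r'/2$, lets consecutive cuts be nearly $\frac32 r'$ apart. Merging a short piece into a neighbour can produce a piece longer than $r'$.

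No fix is needed. Suppose the tile $[kr'/2,(k+1)r'/2]$ contains a gap $G_k$ with $G_k\cap X=\emptyset$ and $|G_k|\ge g:=(4C_{\AD})^{-6}r'$, and let $m_k$ be its midpoint. Then $G_k$ and $G_{k+1}$ overlap in at most one point, so
\[
g\le m_{k+1}-m_k\le r'-g .
\]
Hence $J_k:=[m_k,m_{k+1})$ automatically satisfies $g\le |J_k|\le r'$. Also $X\cap J_k=X\cap[m_k,m_{k+1}]$ is closed, since $m_{k+1}$ lies inside a gap.

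The remaining bookkeeping is as you describe, with three points to make explicit:
\begin{itemize}
\item In the porosity count, use every other (or every third) subinterval so the centered intervals are disjoint.
\item The enlarged tile has length at most $r'/2+r'/(2K)\le r'\le r_0$, so the upper AD bound applies to it.
\item In the large-interval case of the lower bound, the shrunken interval has length $g$. The bound $g\ge r_1$ there is exactly the hypothesis $(4C_{\AD})^6r_1\le r'$; this hypothesis is needed there as well as in the porosity step.
\end{itemize}
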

%%%%%%%%%%%%%%%%%%%%%%%%%%%%%%%%%%%%%%%%%%%%%%%%%%%%%%%%%%%%%%%%%%%%%%%%%%%%%%%%

%%%%%%%%%%%%%%%%%%%%%%%%%%%%%%%%%%%%%%%%%%%%%%%%%%%%%%%%%%%%%%%%%%%%%%%%%%%%%%%%
\subsection{Simple and moderate regions in the plane}
\label{s:moderate}

Here we study regions in the plane which are nice enough when intersected with the product
of two $\delta$-dimensional sets. This will be used in the proof of rectangular Katz--Tao
property in~\S\ref{s:RKT-proof} below.

We first give a definition inspired by the concept of parallel and transversal lines:
%%%%%%%%%%%%%%%%%%%%%%%%%%%%%%%%%%%%%%%%%%%%%%%%%%%%%%%%%%%%%%%%%%%%%%%%%%%%%%%%
\begin{definition}
\label{d:partran}
Let $J\subset\mathbb R$ be an interval and $f,g:J\to \mathbb R$ be continuous.
\begin{itemize}
\item We say that $f,g$ are \emph{parallel}, if
\begin{equation}
  \label{e:moderate-region}
0<|g(y)-f(y)|\leq 10|g(y')-f(y')|\quad\text{for all }y,y'\in J.
\end{equation}
\item We say that $f,g$ are \emph{transversal}, if for each $c\in\mathbb R$,
the set $\{y\in J\colon g(y)-f(y)=c\}$ has at most 10 points.
\end{itemize}
\end{definition}
%%%%%%%%%%%%%%%%%%%%%%%%%%%%%%%%%%%%%%%%%%%%%%%%%%%%%%%%%%%%%%%%%%%%%%%%%%%%%%%%
One way to check that two functions are transversal is by using their derivatives:
%%%%%%%%%%%%%%%%%%%%%%%%%%%%%%%%%%%%%%%%%%%%%%%%%%%%%%%%%%%%%%%%%%%%%%%%%%%%%%%%
\begin{lemma}
  \label{l:transversal-der}
Let $J\subset\mathbb R$ be an interval and assume that $f,g\in C^\infty(J;\mathbb R)$ satisfy
at least one of the following conditions:
\begin{itemize}
\item $f'(y)\neq g'(y)$ for all $y\in J$; or
\item $f''(y)\neq g''(y)$ for all $y\in J$.
\end{itemize}
Then $f,g$ are transversal.
\end{lemma}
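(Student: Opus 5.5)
Set $\varphi:=g-f\in C^\infty(J;\mathbb R)$. Transversality of $f,g$ means that for each $c\in\mathbb R$ the level set $\{y\in J\colon \varphi(y)=c\}$ has at most 10 points. So the plan is to derive this counting bound from the sign conditions on $\varphi'$ or $\varphi''$ using Rolle's theorem, treating the two hypotheses separately.

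\emph{Case 1: $f'\neq g'$ on $J$.} Here $\varphi'(y)\neq 0$ for all $y\in J$. Since $J$ is an interval and $\varphi'$ is continuous, the intermediate value theorem shows that $\varphi'$ has constant sign on $J$. Hence $\varphi$ is strictly monotone on $J$. It follows that each level set $\{\varphi=c\}$ has at most one point.

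\emph{Case 2: $f''\neq g''$ on $J$.} Now $\varphi''$ never vanishes on $J$. Suppose some level set $\{\varphi=c\}$ contained three points $y_1<y_2<y_3$. Applying Rolle's theorem to $\varphi-c$ on $[y_1,y_2]$ and on $[y_2,y_3]$ gives points $z_1<z_2$ with $\varphi'(z_1)=\varphi'(z_2)=0$. Applying Rolle's theorem to $\varphi'$ on $[z_1,z_2]$ then gives a zero of $\varphi''$ in $J$, which is a contradiction. Equivalently, $\varphi$ is strictly convex or strictly concave. So each level set has at most two points.

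In both cases the bound is far below 10, so $f,g$ are transversal in the sense of Definition~\ref{d:partran}. I do not expect any genuine obstacle. The one point needing care is that $J$ is an interval, so that the intermediate value theorem and Rolle's theorem apply between any two of its points. This holds regardless of whether $J$ is open, closed, or unbounded.
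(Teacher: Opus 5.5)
Your proof is correct and matches the paper's argument: there, Rolle's theorem applied once shows that two points in a level set force a zero of $g'-f'$, and applied twice shows that three points force a zero of $g''-f''$. Your use of the intermediate value theorem to get strict monotonicity in Case 1 is just an equivalent rephrasing of that single Rolle step.
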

 %%%%%%%%%%%%%%%%%%%%%%%%%%%%%%%%%%%%%%%%%%%%%%%%%%%%%%%%%%%%%%%%%%%%%%%%%%%%%%%%
\begin{proof}
By Rolle's Theorem, if the set $\{y\in J\colon g(y)-f(y)=c\}$ contains at least 2 points,
then $g'-f'$ has a zero on $J$. By the same theorem applied twice,
if the set above contains at least 3 points, then $g''-f''$ has a zero on~$J$.
\end{proof}
%%%%%%%%%%%%%%%%%%%%%%%%%%%%%%%%%%%%%%%%%%%%%%%%%%%%%%%%%%%%%%%%%%%%%%%%%%%%%%%%
A \emph{vertically simple region} is a subset of $\mathbb R^2$ of the form
\begin{equation}
  \label{e:v-simple-region}
\Omega=\{(y_1,y_2)\colon y_1\in J,\ f(y_1)\leq y_2\leq g(y_1)\}
\end{equation}
where $J\subset\mathbb R$ is an interval and $f,g:J\to\mathbb R$ are two functions
such that $f\leq g$ on~$J$. Alternatively one can consider a \emph{horizontally simple region}
by switching the roles of~$y_1$ and $y_2$; the results presented here apply to such regions as well.

We will study simple regions where the functions $f,g$ lie in the set
$$
\Lip_{10}(J):=\{f:J\to [-10, 10]\text{ such that for all }y,y'\in J, |f(y)-f(y')|\leq 10|y-y'|\}.
$$
The following definition imposes the additional condition that
the region have roughly constant thickness:
%%%%%%%%%%%%%%%%%%%%%%%%%%%%%%%%%%%%%%%%%%%%%%%%%%%%%%%%%%%%%%%%%%%%%%%%%%%%%%%%
\begin{definition}
  \label{d:moderate-region}
Let $\Omega\subset\mathbb R^2$. We say $\Omega$ is a \emph{vertically moderate region},
if it has the form~\eqref{e:v-simple-region}
where $J\subset [-10, 10]$, $f,g\in \Lip_{10}(J)$, $f<g$ on $J$, and $f,g$ are parallel.
\end{definition}
%%%%%%%%%%%%%%%%%%%%%%%%%%%%%%%%%%%%%%%%%%%%%%%%%%%%%%%%%%%%%%%%%%%%%%%%%%%%%%%%
Vertically moderate regions are useful to us because of the following
%%%%%%%%%%%%%%%%%%%%%%%%%%%%%%%%%%%%%%%%%%%%%%%%%%%%%%%%%%%%%%%%%%%%%%%%%%%%%%%%
\begin{lemma}
  \label{l:moderate-regular}
Let $\Omega\subset\mathbb R^2$ be a vertically moderate region,
$r\in (0,1]$, and $Y\subset \mathbb R$ be a $\delta$-dimensional AD regular set
on scales $r$ to~1 with constant $C_{\AD}$. Then
\begin{equation}
  \label{e:moderate-regular}
|\Omega\cap Y^2|_r\leq 3\cdot 10^8 C_{\AD}^4|\Omega|_r^\delta.
\end{equation}
\end{lemma}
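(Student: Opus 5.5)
The plan is to bound both sides of \eqref{e:moderate-regular} by the same product $\big(\tfrac{L}{r}+1\big)\big(\tfrac{w}{r}+1\big)$, where $L:=|J|$ and $w:=\min_J(g-f)>0$. Because $f,g$ are parallel, $w\le g-f\le 10w$ on $J$. Also $L\le 20$ since $J\subset[-10,10]$, and $w\le 20$ since $f,g$ take values in $[-10,10]$. I will show
$$
|\Omega\cap Y^2|_r\lesssim C_{\AD}^4\Big(\Big({L\over r}+1\Big)\Big({w\over r}+1\Big)\Big)^\delta,\qquad
|\Omega|_r\gtrsim \Big({L\over r}+1\Big)\Big({w\over r}+1\Big).
$$
Combining the two gives \eqref{e:moderate-regular} with some absolute constant; I expect that tracking the constants yields $3\cdot10^8$.

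\emph{Upper bound.} By Lemma~\ref{l:AD-cover-2} (applicable since $L\le 20\le100$), $Y\cap J$ is covered by at most $1200C_{\AD}^2(L/r+1)^\delta$ intervals $K$ of length $r$. Fix such a $K$ with $K\cap J\neq\emptyset$. Since $f,g$ are $10$-Lipschitz, for $y_1\in K\cap J$ the fiber $[f(y_1),g(y_1)]$ lies in a single interval $I_K$ of length at most $10w+20r$, and $|I_K|\le 100$ after trimming to $[-10,10]$. Then $\Omega\cap Y^2\cap(K\times\mathbb R)\subset K\times(Y\cap I_K)$. Applying Lemma~\ref{l:AD-cover-2} again, $Y\cap I_K$ is covered by $1200C_{\AD}^2(|I_K|/r+1)^\delta\lesssim C_{\AD}^2(w/r+1)^\delta$ intervals of length $r$. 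The products of these intervals with $K$ are $r$-squares. Summing over $K$ gives the claimed upper bound with $C_{\AD}^4$. Doing the count this way, slicing along $r$-intervals directly instead of first grouping at an intermediate scale, avoids an extra factor of $C_{\AD}^2$.

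\emph{Lower bound.} By \eqref{e:volume-nbhd} with $d=2$, $|\Omega|_r\ge |\Omega(r)|/(9r^2)$. For each $y_1\in J$, the set $\Omega(r)$ contains the vertical segment $\{y_1\}\times[f(y_1)-r,g(y_1)+r]$, whose length is at least $w+2r$. This gives $|\Omega(r)|\ge L(w+r)$. Near an endpoint $y_1^0$ of $J$, the set $\Omega(r)$ also contains the $r$-square neighborhood $[y_1^0-r,y_1^0+r]\times[f(y_1^0)-r,f(y_1^0)+r]$, and more generally the horizontal $r$-thickening of the fiber over $y_1^0$. This gives $|\Omega(r)|\ge r(w+r)$. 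Hence $|\Omega(r)|\ge\tfrac12(L+r)(w+r)$, so $|\Omega|_r\ge \tfrac1{18}(L/r+1)(w/r+1)$.

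The main obstacle is this lower bound, specifically making sure the geometric containments in $\Omega(r)$ are valid when $L<r$ or $w<r$, and fitting the numerical constants. The upper bound is a direct double application of Lemma~\ref{l:AD-cover-2}. The parallel condition enters only through the factor $10$ that relates the maximal thickness to $w$.
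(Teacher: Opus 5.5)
Your proof is correct. The upper bound is the paper's: cover $Y\cap J$ by $r$-intervals using Lemma~\ref{l:AD-cover-2}, note that the fibers of $\Omega$ over each such interval lie in one short interval, and apply Lemma~\ref{l:AD-cover-2} again. You measure thickness by $w=\inf_J(g-f)$ where the paper uses $T_\Omega=\max_J(g-f)$, and the parallel condition makes these interchangeable.

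The difference is in the lower bound on $|\Omega|_r$.

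\emph{The paper's route.} It packs points into $\Omega$: it takes $y_1$ in a $2r$-separated set $E\subset J$ and $y_2$ in a $2r$-separated subset of each fiber $[f(y_1),g(y_1)]$. No $r$-square contains two of these points, so $|\Omega|_r\ge\frac14(\frac{|J|}{2r}+1)(\frac{T_\Omega}{20r}+1)$.

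\emph{Your route.} You pass to the neighborhood via $|\Omega|_r\ge|\Omega(r)|/(9r^2)$ from \eqref{e:volume-nbhd}. You then compute $|\Omega(r)|$ by Fubini over vertical slices plus one thickened fiber.

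Both are elementary. The paper's version is purely discrete and never leaves $\Omega$. Yours is a bit shorter. Contrary to your stated worry, it needs no separate treatment of $L<r$ or $w<r$. The containments $\{y_1\}\times[f(y_1)-r,g(y_1)+r]\subset\Omega(r)$ and $\big(\{y_1^0\}\times[f(y_1^0),g(y_1^0)]\big)+[-r,r]^2\subset\Omega(r)$ hold unconditionally.

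On the explicit constant: with $|I_K|\le 10w+20r$ and $|\Omega|_r\ge\frac1{18}(\frac Lr+1)(\frac wr+1)$, your bookkeeping gives $1200^2\cdot 21\cdot 18\approx 5.4\cdot 10^8$ when $\delta=1$. This overshoots $3\cdot 10^8$. The fix is to bound the diameter of the union of fibers over $K\cap J$ directly, as the paper does with $T_\Omega+10r$. For $y_1,y'_1\in K\cap J$ you have $g(y_1)-f(y'_1)\le g(y_1)-f(y_1)+10r\le 10w+10r$. This gives $1200^2\cdot 11\cdot 18<3\cdot10^8$.
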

%%%%%%%%%%%%%%%%%%%%%%%%%%%%%%%%%%%%%%%%%%%%%%%%%%%%%%%%%%%%%%%%%%%%%%%%%%%%%%%%
\begin{proof}
1. We write~$\Omega$ in the form~\eqref{e:v-simple-region} and define the maximal
thickness of~$\Omega$ as
$$
T_\Omega:=\max_{y\in J} (g(y)-f(y)).
$$
We first estimate $|\Omega|_r$ from below in terms of the width $|J|$ and the thickness $T_\Omega$.
Take a $2r$-separated set
$$
E\subset J,\quad
\#(E)=\Big\lfloor {|J|\over 2r}\Big\rfloor+1.
$$
For each $y_1\in E$, take a $2r$-separated set
$$
S_{y_1}\subset [f(y_1),g(y_1)],\quad
\#(S_{y_1})=\Big\lfloor {g(y_1)-f(y_1)\over 2r}\Big\rfloor+1
\geq \Big\lfloor{T_\Omega\over 20r} \Big\rfloor +1
$$
where in the last inequality we use the parallel condition~\eqref{e:moderate-region}.

The set
$$
\{(y_1,y_2)\colon y_1\in E,\ y_2\in S_{y_1}\}
$$
is contained in~$\Omega$ and no two distinct points in this set can lie in the same $r$-square.
Thus
\begin{equation}
  \label{e:mod-reg-int-1}
\begin{aligned}
|\Omega|_r &\geq
\bigg(\Big\lfloor {|J|\over 2r}\Big\rfloor+1\bigg)\bigg(\Big\lfloor{T_\Omega\over 20r} \Big\rfloor +1\bigg)
\\
&\geq \frac14\bigg({|J|\over 2r}+1\bigg)\bigg({T_\Omega\over 20r} +1\bigg).
\end{aligned}
\end{equation}

\noindent 2. We now cover $\Omega\cap Y^2$ by $r$-squares.
By Lemma~\ref{l:AD-cover-2}, we can cover
$J\cap Y$ by intervals:
$$
J\cap Y\ \subset\ \bigcup_{\ell=1}^L J_\ell,\quad
|J_\ell|=r,\quad
L\leq 1200C_{\AD}^2\Big({|J|\over r}+1\Big)^\delta.
$$
For each $\ell$ the projection
$$
Q_\ell:=\{y_2\in\mathbb R\colon \text{there exists }y_1\in J_\ell\text{ such that }(y_1,y_2)\in\Omega\}
$$
is contained in an interval $K_\ell$ with $|K_\ell|=T_\Omega+10r$.
Indeed, if $y_2,y'_2\in Q_\ell$, then there exist $y_1,y'_1\in J_\ell\cap J$
such that
$$
f(y_1)\leq y_2\leq g(y_1),\quad
f(y'_1)\leq y'_2\leq g(y'_1).
$$
Then
$$
y_2-y'_2\leq g(y_1)-f(y'_1)\leq g(y_1)-f(y_1)+f(y_1)-f(y'_1)
\leq T_\Omega+ 10 r
$$
where in the last inequality we use that $f\in \Lip_{10}(J)$ and $|y_1-y'_1|\leq r$.
We similarly estimate $y'_2-y_2$, and it follows that the diameter of $Q_\ell$
is bounded above by~$T_\Omega+10r$.

Next, by Lemma~\ref{l:AD-cover-2}, we can cover each $K_\ell\cap Y$ by intervals:
$$
K_\ell\cap Y\ \subset\ \bigcup_{n=1}^{N_\ell} K_{\ell n},\quad
|K_{\ell n}|=r,\quad
N_\ell\leq 1200 C_{\AD}^2\Big({T_\Omega\over r}+11\Big)^{\delta}.
$$
The set $\Omega\cap Y^2$ is covered by the $r$-squares
$J_\ell\times K_{\ell n}$. It follows that
$$
|\Omega\cap Y^2|_r\leq \sum_{\ell=1}^L N_\ell \leq 15\cdot 10^5 C_{\AD}^4
\Big({|J|\over r}+1\Big)^\delta \Big({T_\Omega\over r}+11\Big)^\delta.
$$
Together with~\eqref{e:mod-reg-int-1} this gives~\eqref{e:moderate-regular}.
\end{proof}
%%%%%%%%%%%%%%%%%%%%%%%%%%%%%%%%%%%%%%%%%%%%%%%%%%%%%%%%%%%%%%%%%%%%%%%%%%%%%%%%
The next lemma shows how to split a simple region whose bounding functions are transversal
into a union of moderate regions.
%%%%%%%%%%%%%%%%%%%%%%%%%%%%%%%%%%%%%%%%%%%%%%%%%%%%%%%%%%%%%%%%%%%%%%%%%%%%%%%%
\begin{lemma}
  \label{l:simple-split}
Let $r\in (0,1]$ and $\Omega$ be a region of the form~\eqref{e:v-simple-region} where
$J\subset [-10,10]$, $f,g\in \Lip_{10}(J)$, $g-f\geq r$ on~$J$, and
$f,g$ are transversal.
Then we can write $\Omega$ as a union of no more than $31\log(1/r)+150$ many
vertically moderate regions.
\end{lemma}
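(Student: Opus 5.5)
We split $J$ according to the dyadic size of the thickness $h:=g-f$, which takes values in $[r,20]$ since $f,g\in\Lip_{10}(J)$ and $g-f\geq r$. Let $K$ be the smallest integer with $2^K r\geq 20$, so $K\leq \log_2(20/r)+1$. For $k=0,1,\dots,K$ put
$$
A_k:=\{y\in J\colon 2^k r\leq h(y)< 2^{k+1}r\},
$$
with the last one also containing the points where $h=2^{K+1}r$. These sets cover $J$, since $h\geq r$.

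We show each $A_k$ is a union of at most $21$ intervals, up to finitely many boundary points. By transversality, each level set $\{h=c\}$ has at most $10$ points. The continuous function $h$ crosses the levels $2^kr$ and $2^{k+1}r$ at no more than $20$ points in total. Away from these points, $h-2^kr$ and $h-2^{k+1}r$ have constant sign on each complementary interval. So $J$ minus at most $20$ points splits into at most $21$ open intervals, and on each of them $h$ either lies entirely in $[2^kr,2^{k+1}r]$ or entirely outside that range. We take the closures of the intervals of the first kind, which keeps $h$ in the closed range $[2^kr,2^{k+1}r]$ by continuity. This gives a family of at most $21(K+1)$ intervals $J_i$ covering $J$, and on each $J_i$ the function $h$ takes values in some $[a,2a]$.

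On each $J_i$ we therefore have $0<h(y)\leq 2h(y')$, which is much stronger than the parallel condition~\eqref{e:moderate-region}. Also $J_i\subset J\subset[-10,10]$, the restrictions of $f,g$ stay in $\Lip_{10}(J_i)$, and $f<g$. So
$$
\Omega_i:=\{(y_1,y_2)\colon y_1\in J_i,\ f(y_1)\leq y_2\leq g(y_1)\}
$$
is a vertically moderate region in the sense of Definition~\ref{d:moderate-region}, and $\Omega=\bigcup_i\Omega_i$ because the $J_i$ cover $J$.

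It remains to count. The number of pieces is at most
$$
21(K+1)\leq 21\log_2(20/r)+42=\frac{21}{\log 2}\log(1/r)+21\log_2 20+42.
$$
Here $21/\log 2\approx 30.3\leq 31$ and $21\log_2 20+42\approx 132.8\leq 150$, so the bound $31\log(1/r)+150$ holds.

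The main obstacle is the bookkeeping in the second step. Transversality only bounds the number of exact solutions of $h=c$, so we must check that the region between two crossings really has $h$ on one side of each threshold. This follows from the intermediate value theorem. A degenerate case where $h$ equals a threshold on an entire subinterval cannot occur, since that would give infinitely many points in one level set, contradicting transversality.
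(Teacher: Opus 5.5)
Your proof is correct and is essentially the paper's argument: split $J$ dyadically by the size of $g-f$, use transversality to bound the number of threshold crossings by $20$ so that each dyadic piece consists of at most $21$ intervals, and observe that $f,g$ are parallel on each piece. The one step you assert without proof is that points with $g-f=2^kr$ exactly lie in the closure of some first-kind interval at scale $k$ or $k-1$; this does follow from continuity and finiteness of level sets when $J$ is nondegenerate, and the paper avoids the issue by using the closed ranges $2^{-m}\leq g-f\leq 2^{1-m}$ directly, at the cost of possibly producing single-point pieces.
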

%%%%%%%%%%%%%%%%%%%%%%%%%%%%%%%%%%%%%%%%%%%%%%%%%%%%%%%%%%%%%%%%%%%%%%%%%%%%%%%%
\begin{proof}
For $m\in\mathbb Z$, define the set
$$
J_m:=\{y\in J\colon 2^{-m}\leq g(y)-f(y)\leq 2^{1-m}\}.
$$
Then
$$
J=\bigcup_{m=-4}^R J_m\quad\text{where }R:=\lceil\log_2(1/r)\rceil.
$$
For each $m$, the set $\{y_1\in J\colon g(y_1)-f(y_1)\in \{2^{-m},2^{1-m}\}\}$ has
at most 20 points and thus splits $J$ into no more than 21 intervals.
For each of these intervals, the inequality $2^{-m}\leq g-f\leq 2^{1-m}$ either
holds on the entire interval or fails in its interior. Thus we can write
$$
J_m=\bigcup_{k=1}^{K_m} J_{mk},\quad
0\leq K_m\leq 21,
$$
where each $J_{mk}$ is an interval or a point.
We then have
$$
\Omega=\bigcup_{m=-4}^R \bigcup_{k=1}^{K_m}\Omega_{mk}\quad\text{where }
\Omega_{mk}:=\Omega\cap \{y_1\in J_{mk}\},
$$
and each region $\Omega_{mk}$ is vertically moderate. (If $J_{mk}$ is a point and
we do not want to allow length~0 intervals, we can expand $J_{mk}$ to a tiny interval
in a way that $\Omega_{mk}$ is still vertically moderate.)
\end{proof}
%%%%%%%%%%%%%%%%%%%%%%%%%%%%%%%%%%%%%%%%%%%%%%%%%%%%%%%%%%%%%%%%%%%%%%%%%%%%%%%%
Our last lemma tightly covers a region bounded by several curves by moderate regions:
%%%%%%%%%%%%%%%%%%%%%%%%%%%%%%%%%%%%%%%%%%%%%%%%%%%%%%%%%%%%%%%%%%%%%%%%%%%%%%%%
\begin{lemma}
  \label{l:moderate-split-general}
Let $J\subset [-10,10]$ be an interval and $f_1,\dots,f_N,g_1,\dots,g_M\in\Lip_{10}(J)$,
where $N,M\leq 10$. Denote
$$
f_{\max}(y):=\max_{1\leq n\leq N}f_n(y),\quad
g_{\min}(y):=\min_{1\leq m\leq M}g_m(y).
$$
Define the region
$$
\Omega:=\{(y_1,y_2)\colon y_1\in J,\ f_{\max}(y_1)\leq y_2\leq g_{\min}(y_1)\}. 
$$
Assume that for each distinct pair of functions $f,g\in \{f_1,\dots,f_N,g_1,\dots,g_M\}$,
$f$ and~$g$ are either parallel or transversal. Fix $r\in (0,1]$. Then we can write
$$
\Omega\subset\bigcup_{\ell=1}^L \Omega_\ell,\quad
L\leq 6\cdot 10^4 \log(1/r)+3\cdot 10^5,\quad
|\Omega_\ell|_r\leq 9|\Omega|_r,
$$
where each $\Omega_\ell$ is a vertically moderate region.
\end{lemma}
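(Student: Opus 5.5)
The plan is to cut $J$ into a bounded number of subintervals on which the maximum and the minimum are each attained by a single function, and then to apply Lemma~\ref{l:simple-split} on each piece.

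\textbf{Step 1: partition $J$.} Take a pair $f,g$ from the list $\{f_1,\dots,f_N,g_1,\dots,g_M\}$.
\begin{itemize}
\item If $f,g$ are transversal, the set $\{f=g\}$ has at most 10 points.
\item If $f,g$ are parallel, $g-f$ never vanishes, so by continuity it has a constant sign on $J$.
\end{itemize}
There are at most $\binom{20}{2}=190$ pairs, so the union of all coincidence points has at most 1900 points. These points cut $J$ into at most 1901 intervals $J'$. On each $J'$ the ordering of all the functions is fixed. Hence $f_{\max}=f_a$ and $g_{\min}=g_b$ on $J'$ for fixed indices $a,b$. Discard the pieces where $f_a>g_b$, since there $\Omega$ is empty.

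\textbf{Step 2: handle thin pieces.} On a remaining piece, set $\Omega'=\Omega\cap\{y_1\in J'\}$, a simple region bounded by $f_a\le g_b$. The pair $f_a,g_b$ is either parallel or transversal (or $a$-th and $b$-th are the same function, in which case the region is a graph and is handled like the thin part below). Lemma~\ref{l:simple-split} requires $g-f\ge r$, so:
\begin{itemize}
\item Replace $g_b$ by $\tilde g:=\max(g_b,f_a+r)$ and take $\Omega_\ell$ inside $\{f_a\le y_2\le\tilde g\}$.
\item Better, split $J'$ into the set where $g_b-f_a\ge r$ and the set where $g_b-f_a<r$. By transversality (or, in the parallel case, a direct check) this uses boundedly many intervals.
\item On the thin intervals, cover by the moderate region between $f_a$ and $f_a+r$. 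This region has $r$-covering number comparable to that of the graph, which is at most $9|\Omega|_r$ because the graph lies in $\Omega$. Use the $(3r)$-neighborhood bound \eqref{e:volume-nbhd} and its doubling.
\end{itemize}

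\textbf{Step 3: thick pieces.} On intervals where $g_b-f_a\ge r$:
\begin{itemize}
\item If $f_a,g_b$ are parallel, $\Omega'$ is already moderate.
\item If they are transversal, Lemma~\ref{l:simple-split} writes $\Omega'$ as a union of at most $31\log(1/r)+150$ moderate regions.
\end{itemize}
Each of these lies in $\Omega$, so its $r$-covering number is at most $|\Omega|_r$. Multiplying by the roughly $2\cdot1901$ pieces gives $L\lesssim 6\cdot10^4\log(1/r)+3\cdot10^5$.

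\textbf{Main obstacle.} The hardest part is the thin pieces. There the enlarged moderate region must stay Lipschitz with values in $[-10,10]$, and the bound $|\Omega_\ell|_r\le 9|\Omega|_r$ must still hold. I would handle this as follows:
\begin{itemize}
\item Use $f_a$ and $\min(f_a+r,10)$ (or shift down near the top boundary) as the bounding functions.
\item Note that any $r$-square meeting the graph of $f_a$ over $J'$ is covered, together with its neighbors, by at most 9 grid squares. This is where the constant 9 comes from.
\item Note that $\Omega$ contains that graph.
\end{itemize}
Counting the transversal zero sets carefully keeps all constants within the stated bounds.
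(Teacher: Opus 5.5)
Your skeleton matches the paper: cut $J$ at the at most 1900 coincidence points, identify $f_{\max}=f_a$ and $g_{\min}=g_b$ on each piece, then invoke Lemma~\ref{l:simple-split}. Your bound for the thin parts via the graph $\Gamma$ of $f_a$, namely $|\Gamma(r)|_r\le 9|\Gamma(r)|_{3r}\le 9|\Gamma|_r\le 9|\Omega|_r$, is also correct.

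\textbf{Main gap: the constants.} The thick/thin splitting does not yield the stated bound on $L$. On a transversal piece the set $\{g_b-f_a=r\}$ can have 10 points. So $\{g_b-f_a\ge r\}$ can consist of up to 6 separate intervals, each needing its own application of Lemma~\ref{l:simple-split} at cost $31\log(1/r)+150$. You therefore cannot bound the number of applications by 1901, whereas the budget $6\cdot 10^4\log(1/r)$ allows only about $1935$ of them. Even your own count of ``roughly $2\cdot1901$ pieces'' overshoots, since $2\cdot 1901\cdot 31>10^5$. What you actually prove is $L\le C\log(1/r)+C'$. That would suffice for Lemma~\ref{l:RKT-Y}, but it is not the statement.

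\textbf{The missing device.} Do not split; instead apply Lemma~\ref{l:simple-split} once per full transversal piece, to $f_a$ and $g_b+r$. Adding a constant preserves transversality, since the level sets of $g_b+r-f_a$ are level sets of $g_b-f_a$, and it forces the gap to be $\ge r$. The resulting moderate regions cover $\Omega'$ and lie in $\Omega'(r)$. Hence by \eqref{e:r-cover-monotone} and \eqref{e:X-r-nbhd} their $r$-covering numbers are at most $|\Omega'(r)|_r\le 9|\Omega'(r)|_{3r}\le 9|\Omega'|_r\le 9|\Omega|_r$. This gives $L\le 1901(31\log(1/r)+150)+1900$, within the stated bound. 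Your first option, $\max(g_b,f_a+r)$, does not work: $\max(g_b,f_a+r)-f_a$ equals $r$ on whole intervals, so transversality is lost. The additive shift is what makes it work.

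\textbf{Two smaller slips.}
\begin{enumerate}
\item You claim that in the parallel case ``a direct check'' splits $J'$ into boundedly many thick and thin intervals. This is false, since a parallel pair can cross the level $r$ infinitely often. It is also unnecessary: on a full piece where $f_a,g_b$ are parallel, $\Omega'$ is already vertically moderate.
\item By discarding whole closed pieces with $f_a>g_b$ you can lose isolated points of $\Omega$. At a coincidence point where $f_{\max}$ touches $g_{\min}$ from above, both adjacent pieces are empty, yet $\Omega$ contains a point there. These points must be covered separately, e.g.\ by at most 1900 single $r$-squares, as in the paper.
\end{enumerate}
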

%%%%%%%%%%%%%%%%%%%%%%%%%%%%%%%%%%%%%%%%%%%%%%%%%%%%%%%%%%%%%%%%%%%%%%%%%%%%%%%%
\begin{proof}
We say $y\in J$ is an \emph{intersection point}, if there exist
two distinct $f,g\in \{f_1,\dots,f_N,\break g_1,\dots,g_M\}$ such that
$f(y)=g(y)$. For $f,g$ parallel, there are no intersection points
and for $f,g$ transversal, there are at most 10 intersection points.
Thus there are at most 1900 intersection points in total.
These points split $J$ into intervals $J_1,\dots,J_P$ where
$P\leq 1901$.

For each $p\in [1,P]$, we have either $f_{\max}<g_{\min}$ or $f_{\max}>g_{\min}$
in the interior of $J_p$. In the first case we call $J_p$ \emph{full} and in the second
case we call $J_p$ \emph{empty}. The region~$\Omega$ is the union of the regions $\Omega^{(p)}:=\Omega\cap \{y_1\in J_p\}$ where $J_p$ is full and
at most 1900 single-point sets. Each single-point set is contained in an $r$-square,
which is a vertically moderate region with $r$-covering number equal to~1.

Thus it remains to consider a region of the form $\Omega^{(p)}$ where
$J_p$ is full. On $J_p$, we have $f_{\max}=f_{n(p)}$ and $g_{\min}=g_{m(p)}$
for some $n(p)$ and $m(p)$ depending on~$p$. Moreover, $f_{\max}\leq g_{\min}$ on~$J_p$.
Thus we have two cases:
\begin{enumerate}
\item $f_{\max}$ and $g_{\min}$ are parallel over $J_p$. Then the region $\Omega^{(p)}$
is vertically moderate by definition, and $|\Omega^{(p)}|_r\leq |\Omega|_r$.
\item $f_{\max}$ and $g_{\min}$ are transversal over $J_p$. We apply Lemma~\ref{l:simple-split}
to the functions $f_{\max}$ and $g_{\min}+r$ on $J_p$, which correspond to the region
$$
\widehat\Omega^{(p)}:=\{(y_1,y_2)\colon y_1\in J_p,\ f_{\max}(y_1)\leq y_2\leq g_{\min}(y_1)+r\}.
$$
Note that
$$
\Omega^{(p)}\ \subset\ \widehat\Omega^{(p)}\ \subset\ \Omega^{(p)}(r).
$$
We see that $\widehat\Omega^{(p)}$ can be written as a union of no more than $31\log(1/r)+150$ many vertically moderate regions $\widehat \Omega^{(p)}_1,\dots,\widehat\Omega^{(p)}_{Q_p}$. These regions
cover $\Omega^{(p)}$, and for each $q$ we have
$$
|\widehat\Omega^{(p)}_q|_r\leq |\Omega^{(p)}(r)|_r\leq 9|\Omega^{(p)}(r)|_{3r}\leq 9|\Omega^{(p)}|_r\leq 9|\Omega|_r.
$$
Here in the second inequality we use~\eqref{e:r-cover-monotone} and in the third inequality
we use~\eqref{e:X-r-nbhd}.
\end{enumerate}
\end{proof}
%%%%%%%%%%%%%%%%%%%%%%%%%%%%%%%%%%%%%%%%%%%%%%%%%%%%%%%%%%%%%%%%%%%%%%%%%%%%%%%%

%%%%%%%%%%%%%%%%%%%%%%%%%%%%%%%%%%%%%%%%%%%%%%%%%%%%%%%%%%%%%%%%%%%%%%%%%%%%%%%%
%%%%%%%%%%%%%%%%%%%%%%%%%%%%%%%%%%%%%%%%%%%%%%%%%%%%%%%%%%%%%%%%%%%%%%%%%%%%%%%%
\section{Reduction to an additive energy bound}
\label{s:reductions}

In this section we reduce Theorem~\ref{t:FUP-intro-general} to Proposition~\ref{l:additive-bound},
which is an additive energy bound.

%%%%%%%%%%%%%%%%%%%%%%%%%%%%%%%%%%%%%%%%%%%%%%%%%%%%%%%%%%%%%%%%%%%%%%%%%%%%%%%%
\subsection{Basic reductions}

We first reduce Theorem~\ref{t:FUP-intro-general} to a localized version
with the phase of a particular form. For this reduction it helps to make
%%%%%%%%%%%%%%%%%%%%%%%%%%%%%%%%%%%%%%%%%%%%%%%%%%%%%%%%%%%%%%%%%%%%%%%%%%%%%%%%
\begin{definition}
  \label{d:FUP}
Let $\mathcal U\subset\mathbb R^2$ be an open set,
$\Theta\in C^\infty(\mathcal U;\mathbb R)$, and
$\delta,\rho,\beta\in [0,1]$. We say that
\emph{$\FUP(\delta,\rho,\beta)$ holds for $\Theta$ on~$\mathcal U$} if
for each $\chi\in \CIc(\mathcal U)$ and each $C_{\AD}\geq 1$
there exists a constant $C=C(\delta,\rho,\beta,\Theta,\chi,C_{\AD})$ such that
for all $h\in (0,1]$ and all sets $X,Y\subset \mathbb R$ which are $\delta$-dimensional
AD regular on scales $C_{\AD}h^\rho$ to~1 with constant $C_{\AD}$, we have
\begin{equation}
  \label{e:FUP-defined-again}
\|\indic_X\mathcal B(h)\indic_Y\|_{L^2(\mathbb R)\to L^2(\mathbb R)}
\leq Ch^\beta,
\end{equation}
where the operator $\mathcal B(h)$ is defined in~\eqref{e:B-defined-general}.
\end{definition}
%%%%%%%%%%%%%%%%%%%%%%%%%%%%%%%%%%%%%%%%%%%%%%%%%%%%%%%%%%%%%%%%%%%%%%%%%%%%%%%%
In terms of Definition~\ref{d:FUP}, to show Theorem~\ref{t:FUP-intro-general}
it suffices to prove that $\FUP(\delta,\rho,\beta)$ holds for $\Theta$
on~$\mathcal U$, whenever $\Theta$ satisfies~\eqref{e:phase-nondegenerate}
and~\eqref{e:phase-nonlinear} and $\delta,\rho,\beta$ satisfy~\eqref{e:delta-beta-rho}.

We next give a local version of Definition~\ref{d:FUP}:
%%%%%%%%%%%%%%%%%%%%%%%%%%%%%%%%%%%%%%%%%%%%%%%%%%%%%%%%%%%%%%%%%%%%%%%%%%%%%%%%
\begin{definition}
  \label{d:FUP-local}
Let $\mathcal U\subset\mathbb R^2$ be an open set, $\Theta\in C^\infty(\mathcal U;\mathbb R)$,
$\delta,\rho,\beta\in [0,1]$, and $(x_0,y_0)\in \mathcal U$. We say that
\emph{$\FUP(\delta,\rho,\beta)$ holds for $\Theta$ near $(x_0,y_0)$}
if there exists a neighborhood $\mathcal U_0$ of $(x_0,y_0)$
such that $\FUP(\delta,\rho,\beta)$ holds for $\Theta$ on $\mathcal U_0$.
\end{definition}
%%%%%%%%%%%%%%%%%%%%%%%%%%%%%%%%%%%%%%%%%%%%%%%%%%%%%%%%%%%%%%%%%%%%%%%%%%%%%%%%
Taking a partition of unity for the amplitude~$\chi$, we see that Theorem~\ref{t:FUP-intro-general}
reduces to
%%%%%%%%%%%%%%%%%%%%%%%%%%%%%%%%%%%%%%%%%%%%%%%%%%%%%%%%%%%%%%%%%%%%%%%%%%%%%%%%
\begin{proposition}
  \label{l:FUP-reduced-1}
Let $\mathcal U\subset\mathbb R^2$ be an open set, $\Theta\in C^\infty(\mathcal U;\mathbb R)$
satisfy~\eqref{e:phase-nondegenerate} and~\eqref{e:phase-nonlinear},
and $\delta,\rho,\beta\in [0,1]$ satisfy~\eqref{e:delta-beta-rho}.
Then for each $(x_0,y_0)\in \mathcal U$, $\FUP(\delta,\rho,\beta)$ holds
for $\Theta$ near $(x_0,y_0)$.
\end{proposition}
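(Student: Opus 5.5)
Here the paper asks for the core local fractal uncertainty statement, not a formal reduction. I would follow the \cite{hgap} strategy with a different additive quantity, ending at an additive energy bound (Proposition~\ref{l:additive-bound}, still to come).

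\textbf{Normal form.} Fix $(x_0,y_0)$ and shrink to a small neighborhood $\mathcal U_0$. I change variables $x\mapsto F(x)$, $y\mapsto G(y)$ by diffeomorphisms and multiply $\mathcal B(h)$ by unimodular factors $e^{i\varphi(x)/h}$, $e^{i\psi(y)/h}$. These preserve operator norms, and by Lemmas~\ref{l:AD-diffeo}, \ref{l:AD-upper} and \ref{l:AD-nbhd} they preserve AD regularity up to constants. The aim is to put $\Theta$ in the form $\Theta(x,y)=xy+\tfrac12 a\,x^2y^2+O(|x|^3+|y|^3)$ near $(0,0)$, with $a\neq0$ guaranteed by~\eqref{e:phase-nonlinear}.

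\textbf{$TT^*$ and the quadrilateral structure.} Write $\|\indic_X\mathcal B\indic_Y\|^2=\|\indic_Y\mathcal B^*\indic_X\mathcal B\indic_Y\|$. I would not iterate $TT^*$ as in the linear case. Instead I would pass to a dual formulation via a higher power $(\mathcal B^*\indic_X\mathcal B)^2$ and Schur's test. The kernel involves integrals of $e^{i(\Theta(x,y_1)-\Theta(x,y_2)+\Theta(x',y_3)-\Theta(x',y_4))/h}$. After discretizing $X,Y$ at scale $h^\rho$ and applying nonstationary phase, only quadruples nearly satisfying $\partial_x$-relations survive. Up to $h^{-\epsilon}$ losses, the norm is then bounded by a count: the number of $(y_1,\dots,y_4)\in Y^4$ and $x,x'\in X$ with $\partial_x\Theta(x,y_1)-\partial_x\Theta(x,y_2)\approx0$ modulo the curvature structure.

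In the linear case $\Theta=-xy$ this reduces to the usual additive energy of $Y$, which cannot beat $\tfrac12-\delta$ for arbitrary AD sets. The condition~\eqref{e:phase-nonlinear} makes the relevant family of curves $\{y\mapsto\partial_x\Theta(x,y)\}_{x\in X}$ genuinely curved and non-concurrent. This converts the count into an incidence problem between the $\delta$-dimensional set $Y^2\subset\mathbb R^2$ and a $\delta$-dimensional family of curves or tubes of width $h$. This is the ``different additive quantity''. The moderate regions of \S\ref{s:moderate} (Lemmas~\ref{l:moderate-regular}, \ref{l:moderate-split-general}) supply the rectangular Katz--Tao property of $Y^2$ needed there.

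\textbf{Exponent bookkeeping.} I would combine a trivial bound with the energy bound. The trivial bound is $\|\indic_X\mathcal B\indic_Y\|\lesssim h^{\frac12-\delta}$, from $|X|\,|Y|\lesssim h^{2(1-\delta)}$ and the kernel size $h^{-1/2}$. The energy bound would be something like $E\lesssim h^{-3\delta-\frac{\delta}{8}+}$ in the appropriate normalization. Interpolating via the standard $L^2$ estimate should produce exactly $\tfrac78(\tfrac12-\delta)+\tfrac1{32}\delta-2\epsilon$, with $\rho=1-\epsilon$ absorbing the scale loss.

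\textbf{Main obstacle.} The hardest step is verifying that the curve family produced by the normal form satisfies the hypotheses of the two-ends Furstenberg inequality uniformly, including the Katz--Tao and two-ends nonconcentration conditions. I would attempt this first using Lemma~\ref{l:AD-split} to localize to AD-regular pieces. I would also reduce the general phase to a model where the curves are exactly a projective family of lines after a change of variables, so that the theorem of O'Regan--Wu--Yi applies directly.
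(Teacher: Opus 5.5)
There is a genuine gap at the central step: the reduction of the operator norm to a counting problem. The relation you arrive at, $\partial_x\Theta(x,y_1)-\partial_x\Theta(x,y_2)\approx 0$, is by~\eqref{e:phase-nondegenerate} just $y_1\approx y_2$, so it carries no information. The nonstationary phase step meant to produce it also fails as described. In the kernel of $(\mathcal B^*\indic_X\mathcal B)^2$ the integration variables run over the fractal sets $X,Y$. Pieces of length $h^\rho=h^{1-\epsilon}$ are far too short for integration by parts against the phase $\Theta/h$ to gain anything.

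The missing idea is the $L^\infty\to L^4$ bound of Lemmas~\ref{l:Linfty-L4-interval} and~\ref{l:Linfty-L4}. One covers $X$ by $h^{1/2}$-intervals $J$ and dominates $\indic_J$ by a smooth cutoff, which is allowed because $|\mathcal B(h)\indic_Y f|^4\geq 0$. One then expands the fourth power and integrates by parts in $x$ over an entire interval of length $\sim h^{1/2}$. What survives are the $\vec y\in Y^4$ with $|\partial_x\vec\Theta(x,\vec y)|\lesssim h^{1/2-\epsilon}$, where $\vec\Theta$ is as in~\eqref{e:Theta-vec-def}. This is a sum of two values of $\partial_x\Theta$ at the same $x$ matched against another such sum, at scale $r\approx h^{1/2}$ rather than $h$. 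Interpolating with $\|\mathcal B(h)\|_{L^1\to L^\infty}\lesssim h^{-1/2}$ gives~\eqref{e:L2-L2}.

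This is also where the exponent comes from, which your bookkeeping only asserts. The trivial energy bound $r^{5-4\delta}$, fed through~\eqref{e:L2-L2}, yields exactly $h^{\frac78(\frac12-\delta)}$. The gain $r^{\delta/2}$ of Proposition~\ref{l:additive-bound}, raised to the power $\frac18$ with $r\approx h^{1/2}$, yields the extra $h^{\frac1{32}\delta}$. In particular, $\beta$ is not obtained by interpolating with the Hilbert--Schmidt bound $h^{\frac12-\delta}$.

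The incidence problem is also misidentified. In the paper the curves are the graphs of $F_{\tilde y}(x)=\partial_x\Theta(x,y_1)+\partial_x\Theta(x,y_2)$, indexed by pairs $\tilde y\in Y^2$, and the points are $r$-squares in $X\times[-1,1]$ (Lemma~\ref{l:AE-prepare}). The nonlinear phase condition matters because the slope $\partial_xF_{\tilde y}(x)\approx 2(y_1^2+y_2^2)$ is a second independent function of $\tilde y$. This makes the family $2\delta$-dimensional in the rectangular Katz--Tao sense; for $\Theta=-xy$, by contrast, every $F_{\tilde y}$ is the constant $-(y_1+y_2)$. Your formulation takes $Y^2$ as the point set and curves indexed by $x\in X$ alone at width $h$. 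That has the wrong parametrization (even dually, the curves $\{\tilde y\colon F_{\tilde y}(x)=z\}$ are indexed by $(x,z)$), the wrong dimensions, and the wrong scale.

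Moreover, the Jacobian~\eqref{e:A-x-J} of $\tilde y\mapsto\mathcal A_x(\tilde y)$ vanishes on the diagonal. So one must pigeonhole in $|y_1-y_2|\sim\tilde r$ and in the multiplicity $m$ to obtain the RKT constant~\eqref{e:RKT-bound-C}. The moderate-region lemmas serve to prove that bound, not a Katz--Tao property of $Y^2$. They do so by covering preimages under $\mathcal A_x$ of rectangles, which are regions cut out by diagonals, level curves of $F_{\tilde y}(x)$, and circle-like level curves of $\partial_xF_{\tilde y}(x)$. Only then does Theorem~\ref{t:OWY}, balanced against the trivial incidence count in Lemma~\ref{l:final-nail}, give $r^{5-\frac72\delta-\epsilon}$.

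No reduction to exact lines is needed, nor is one available in general, since Theorem~\ref{t:OWY} is stated for $\mathfrak T$-transversal $C^2$ families. Finally, your normal form with remainder $O(|x|^3+|y|^3)$ swamps the $x^2y^2$ term. The $x^2y$, $xy^2$, $x^3y$, $xy^3$ terms must be removed as in Lemma~\ref{l:gauge-reduce}.
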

%%%%%%%%%%%%%%%%%%%%%%%%%%%%%%%%%%%%%%%%%%%%%%%%%%%%%%%%%%%%%%%%%%%%%%%%%%%%%%%%

%%%%%%%%%%%%%%%%%%%%%%%%%%%%%%%%%%%%%%%%%%%%%%%%%%%%%%%%%%%%%%%%%%%%%%%%%%%%%%%%
\subsubsection{Gauge transformations}

We next study local gauge transformations of the phase function which preserve FUP:
%%%%%%%%%%%%%%%%%%%%%%%%%%%%%%%%%%%%%%%%%%%%%%%%%%%%%%%%%%%%%%%%%%%%%%%%%%%%%%%%
\begin{lemma}
  \label{l:FUP-gauge}
Let $\mathcal U_1,\mathcal U_2$ be open sets and $\Theta_j\in C^\infty(\mathcal U_j;\mathbb R)$,
$(x_j,y_j)\in \mathcal U_j$ for $j=1,2$. Fix $\delta,\rho,\beta\in [0,1]$.
Assume that the phase functions $\Theta_1$ near $(x_1,y_1)$ and $\Theta_2$
near $(x_2,y_2)$
are related by the following local gauge transformation:
there exist intervals $\mathcal V,\mathcal W\subset\mathbb R$ with
$x_1\in \mathcal V^\circ,y_1\in\mathcal W^\circ$, a constant $c\neq 0$, and functions
$$
\begin{aligned}
a,\varphi\in C^\infty(\mathcal V;\mathbb R),
&\quad
b,\psi\in C^\infty(\mathcal W;\mathbb R),
\\
\varphi(x_1)=x_2,\quad
\psi(y_1)=y_2,
&\quad
\varphi',\psi'\neq 0,
\\
\mathcal V\times\mathcal W\subset\mathcal U_1,
&\quad
\varphi(\mathcal V)\times\psi(\mathcal W)\subset \mathcal U_2
\end{aligned}
$$
such that
\begin{equation}
\label{e:FUP-gauge}
\Theta_1(x,y)=c\,\Theta_2(\varphi(x),\psi(y))+a(x)+b(y)\quad\text{for all }
(x,y)\in \mathcal V\times\mathcal W.
\end{equation}
Then $\FUP(\delta,\rho,\beta)$ holds for $\Theta_1$ near $(x_1,y_1)$
if and only if it holds for $\Theta_2$ near $(x_2,y_2)$.
\end{lemma}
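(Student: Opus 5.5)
The plan is to conjugate the operator for $\Theta_1$ into an operator for $\Theta_2$ using unitary multiplications, a change of variables and a rescaling of $h$, and to check that the indicator functions transform into indicators of sets that are again AD regular. By symmetry it suffices to prove one direction. Indeed, \eqref{e:FUP-gauge} can be inverted: $\Theta_2(x',y')=c^{-1}\Theta_1(\varphi^{-1}(x'),\psi^{-1}(y'))-c^{-1}a(\varphi^{-1}(x'))-c^{-1}b(\psi^{-1}(y'))$ on $\varphi(\mathcal V)\times\psi(\mathcal W)$, which is again a gauge transformation. So assume $\FUP(\delta,\rho,\beta)$ holds for $\Theta_2$ on some neighborhood $\mathcal U_2^0$ of $(x_2,y_2)$. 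Shrink $\mathcal V,\mathcal W$ to compact intervals so that $\varphi(\mathcal V)\times\psi(\mathcal W)\subset\mathcal U_2^0$, and take $\mathcal U_0:=\mathcal V^\circ\times\mathcal W^\circ$ as the neighborhood for $\Theta_1$.

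Fix $\chi\in\CIc(\mathcal U_0)$ and let $\mathcal B_1(h)$ be the corresponding operator \eqref{e:B-defined-general}. By \eqref{e:FUP-gauge},
$$
\mathcal B_1(h)=e^{ia/h}\,\mathcal B'(h)\,e^{ib/h},\qquad
\mathcal B'(h)f(x)=(2\pi h)^{-1/2}\int e^{\frac{ic}{h}\Theta_2(\varphi(x),\psi(y))}\chi(x,y)f(y)\,dy .
$$
The multiplication operators $e^{ia/h}$ and $e^{ib/h}$ (extended by $1$ outside $\mathcal V$, $\mathcal W$) are unitary and commute with $\indic_X$ and $\indic_Y$, so they can be dropped.

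Next extend $\varphi,\psi$ to $C^\infty$ diffeomorphisms $F,G$ of $\mathbb R$ with $C_F^{-1}\le|F'|,|G'|\le C_F$, keeping $F=\varphi$ near $\pi_x(\supp\chi)$ and $G=\psi$ near $\pi_y(\supp\chi)$. The weighted pullbacks $U_Ff=|(F^{-1})'|^{1/2}f\circ F^{-1}$ and $U_G$ are unitary, and $U_F\indic_X U_F^{-1}=\indic_{F(X)}$, similarly for $G$. Changing variables $x'=F(x)$, $y'=G(y)$ gives
$$
U_F\,\mathcal B'(h)\,U_G^{-1}=\mathcal B_2^{\tilde\chi}(h),\qquad
\tilde\chi(x',y')=\chi(F^{-1}x',G^{-1}y')\,|(F^{-1})'(x')|^{1/2}|(G^{-1})'(y')|^{1/2},
$$
where $\mathcal B_2^{\tilde\chi}(h)$ is the operator \eqref{e:B-defined-general} with phase $c\,\Theta_2$, amplitude $\tilde\chi\in\CIc(\mathcal U_2^0)$ and parameter $h$.

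Now handle $c$. If $c<0$, replace the operator by its transpose complex conjugate, which has the same norm; this swaps the roles of $X,Y$ and of $x,y$ and replaces $c$ by $|c|$ (applying the symmetric argument to $\Theta_2(y,x)$, or equivalently noting that the norm of $\indic_X T\indic_Y$ equals that of $\indic_Y T^*\indic_X$). Set $h'=h/|c|$. Then
$$
\mathcal B_2^{\tilde\chi}(h)=|c|^{-1/2}\,\mathcal B_2(h'),
$$
where $\mathcal B_2(h')$ has phase $\Theta_2$ and amplitude $\tilde\chi$. If $h'>1$, that is $h\ge|c|$, the desired bound is trivial: Schur's test bounds $\|\mathcal B_1(h)\|\le Ch^{-1/2}\le C|c|^{-1/2}$.

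It remains to verify the AD regularity hypotheses, and this bookkeeping is the main obstacle. Lemma~\ref{l:AD-diffeo} shows that $F(X)$ and $G(Y)$ are $\delta$-regular on scales $C_FC_{\AD}h^\rho$ to $C_F^{-1}$ with constant $C_FC_{\AD}$. This requires $1\ge C_F^2C_{\AD}h^\rho$; otherwise $h$ is bounded below and the estimate is again trivial. Lemma~\ref{l:AD-upper} then raises the upper scale back to $1$ at the cost of the constant $2C_F^2C_{\AD}$. Since $h^\rho\le \max(1,|c|)\,h'^\rho$, the lower scale is at most $C'h'^\rho$ for
$$
C':=2C_F^2\max(1,|c|)\,C_{\AD}.
$$
Definition~\ref{d:FUP} allows an arbitrary constant $C_{\AD}$, so $\FUP$ for $\Theta_2$ yields
$$
\|\indic_{F(X)}\mathcal B_2(h')\indic_{G(Y)}\|\le Ch'^\beta\le C|c|^{-\beta}h^\beta,
$$
which proves the claim. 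One point to double-check is that $F$ and $\tilde\chi$ depend only on $\varphi,\psi,\chi$ and not on $X,Y,h$, so that all constants are uniform as required.
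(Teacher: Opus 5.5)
Your proof is correct and follows the same route as the paper: remove the unimodular factors $e^{ia/h},e^{ib/h}$, change variables by $\varphi,\psi$, rescale $h\mapsto h/|c|$, and get AD regularity of the image sets from Lemmas~\ref{l:AD-diffeo} and~\ref{l:AD-upper}. You add care the paper omits, such as extending $\varphi,\psi$ to global diffeomorphisms so that Lemma~\ref{l:AD-diffeo} applies to all of $X,Y$, and treating the regime where $h$ is bounded below.

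One small slip, shared by the paper's one-line remark, is the case $c<0$: the adjoint has the transposed phase $|c|\,\Theta_2(y,x)$, for which FUP is not assumed. Complex conjugating the kernel instead preserves $\|\indic_X T\indic_Y\|$, since the indicators are real, and directly gives phase $|c|\,\Theta_2$ with amplitude $\overline{\tilde\chi}$.
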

%%%%%%%%%%%%%%%%%%%%%%%%%%%%%%%%%%%%%%%%%%%%%%%%%%%%%%%%%%%%%%%%%%%%%%%%%%%%%%%%
\begin{proof}
We assume that $\FUP(\delta,\rho,\beta)$ holds for $\Theta_2$ near $(x_2,y_2)$
and show that it holds for $\Theta_1$ near $(x_1,y_1)$; since gauge equivalence
is a symmetric relation, the other direction will follow as well.
Shrinking $\mathcal V,\mathcal W$ if needed, we may assume that
$\FUP(\delta,\rho,\beta)$ holds for $\Theta_2$ on the interior $\varphi(\mathcal V^\circ)\times\psi(\mathcal W^\circ)$.
We will show that $\FUP(\delta,\rho,\beta)$ holds for $\Theta_1$ on~$\mathcal V^\circ\times\mathcal W^\circ$.

Take any $\chi_1\in \CIc(\mathcal V^\circ\times\mathcal W^\circ)$ and let $\mathcal B_1(h)$ be defined
by~\eqref{e:B-defined-general} using $\Theta_1,\chi_1$. Assume that $X,Y\subset\mathbb R$
are $\delta$-dimensional AD regular sets on scales $C_{\AD}h^{\rho}$ to~1 with constant $C_{\AD}$.
By~\eqref{e:FUP-gauge} we have
$$
\mathcal B_1(h)f(x)=(2\pi h)^{-\frac12} \int_{\mathbb R}e^{{ic\over h}\Theta_2(\varphi(x),\psi(y))}
e^{{i\over h}a(x)}e^{{i\over h}b(y)}\chi_1(x,y)f(y)\,dy.
$$
Assume that $c>0$. Making the change of variables $\tilde x=\varphi(x),\tilde y=\psi(y)$, we see that
$$
\mathcal B_1(h)f=c^{-\frac12} e^{{i\over h}a}\varphi^* \mathcal B_2(h/c)\psi^{-*}e^{{i\over h}b}|\psi'|^{-1}f
$$
where $\mathcal B_2(h)$ is defined by~\eqref{e:B-defined-general} using $\Theta_2$
and $\chi_2(\tilde x,\tilde y):=\chi_1(\varphi^{-1}(\tilde x),\psi^{-1}(\tilde y))$.
Here $\varphi^*u=u\circ\varphi$ and $\psi^{-*}f=f\circ\psi^{-1}$. It follows that
for some constant $C_{\varphi,\psi,c}$, we have
$$
\|\indic_X\mathcal B_1(h)\indic_Y\|_{L^2\to L^2}
 \leq C_{\varphi,\psi,c} \|\indic_{\varphi(X)}\mathcal B_2(h/c)
 \indic_{\psi(Y)}\|_{L^2\to L^2}.
$$
Now the norm on the right-hand side can be estimated using that $\FUP(\delta,\rho,\beta)$
holds for $\Theta_2$ on $\varphi(\mathcal V^\circ)\times \psi(\mathcal W^\circ)$.
The required AD regularity for $\varphi(X),\psi(Y)$ follows from Lemmas~\ref{l:AD-diffeo} and~\ref{l:AD-upper}.

If $c<0$ instead, then the argument proceeds in a similar way, except we relate
$\mathcal B_1(h)$ to the adjoint of $\mathcal B_2(h/c)$.
\end{proof}
%%%%%%%%%%%%%%%%%%%%%%%%%%%%%%%%%%%%%%%%%%%%%%%%%%%%%%%%%%%%%%%%%%%%%%%%%%%%%%%%
The next lemma gives a normal form for the local equivalence class under gauge transformations of phase
functions satisfying the nondegeneracy condition~\eqref{e:phase-nondegenerate} and the
nonlinearity condition~\eqref{e:phase-nonlinear}:
%%%%%%%%%%%%%%%%%%%%%%%%%%%%%%%%%%%%%%%%%%%%%%%%%%%%%%%%%%%%%%%%%%%%%%%%%%%%%%%%
\begin{lemma}
  \label{l:gauge-reduce}
Assume that $\mathcal U_1\subset\mathbb R^2$ is an open set and
$\Theta_1\in C^\infty(\mathcal U_1;\mathbb R)$ satisfies~\eqref{e:phase-nondegenerate}
and~\eqref{e:phase-nonlinear}. Fix any $(x_1,y_1)\in \mathcal U_1$.
Then $\Theta_1$ near $(x_1,y_1)$ is related by a local gauge transformation of the form~\eqref{e:FUP-gauge} to some function $\Theta_2$ near $(0,0)$, where $\Theta_2\in C^\infty(\mathcal U_2)$ has the following Taylor expansion at $(0,0)$:
\begin{equation}
  \label{e:gauge-taylor}
\Theta_2(x,y)=xy+x^2y^2+\mathcal O(|(x,y)|^5)\quad\text{as }(x,y)\to (0,0).
\end{equation}
\end{lemma}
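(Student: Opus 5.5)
We reduce $\Theta_1$ to the normal form~\eqref{e:gauge-taylor} in three steps. First we shift coordinates, then we remove the pure-$x$ and pure-$y$ parts of the Taylor expansion with $a,b$, and finally we use reparametrizations $\varphi,\psi$ and the constant $c$ to normalize the mixed terms.

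\textbf{Step 1: centering and the gauge-invariant quantity.} Translate so that $(x_1,y_1)$ goes to $(0,0)$; this is a gauge transformation with $\varphi,\psi$ translations. Any term of the Taylor expansion depending only on $x$, or only on $y$, is absorbed into $a(x)+b(y)$. So we may assume $\Theta_2(x,0)=\Theta_2(0,y)=0$, that is, $\Theta_2=xy\,F(x,y)$ with $F$ smooth.

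The key object is the mixed derivative $\partial_x\partial_y\Theta$. Under $\Theta_1=c\,\Theta_2(\varphi(x),\psi(y))+a+b$ it transforms as
$$
\partial_x\partial_y\Theta_1=c\,\varphi'(x)\psi'(y)\,(\partial_x\partial_y\Theta_2)(\varphi,\psi).
$$
Consequently $\partial_x\partial_y\log|\partial_x\partial_y\Theta|$ transforms as a density: it picks up the factor $\varphi'\psi'$ and composition. This is why~\eqref{e:phase-nonlinear} is gauge invariant.

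\textbf{Step 2: normalizing the mixed derivative.} Write $\partial_x\partial_y\Theta_1=e^{G(x,y)}\,\sigma$, where $\sigma=\pm1$, $G$ is smooth, and $\partial_x\partial_y G\neq0$. Choose $c$ to fix the sign and the overall constant. We then want $\varphi,\psi$ with $\varphi(0)=\psi(0)=0$ such that
$$
G(x,y)=\log|c|+\log\varphi'(x)+\log\psi'(y)+H(\varphi(x),\psi(y)),
$$
where $H:=\log\partial_x\partial_y\Theta_2$ is the target. The target in~\eqref{e:gauge-taylor} has $\partial_x\partial_y\Theta_2=1+4xy+O(|(x,y)|^3)$, so
$$
H=4xy+O(|(x,y)|^3).
$$
Since only the Taylor expansion of $\Theta_2$ up to order~4 is prescribed, it suffices to match $\partial_x\partial_y\Theta$ up to order~2, that is, the terms of degree $\le2$ in $(x,y)$.

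Expand $G=g_0+g_1x+g_2y+g_{11}x^2+g_{12}xy+g_{22}y^2+O(3)$. Choose $\log\varphi'(x)=\alpha_1x+\alpha_2x^2$ and $\log\psi'(y)=\beta_1y+\beta_2y^2$, together with $\log|c|=g_0$. These choices kill $g_1,g_2,g_{11},g_{22}$. Only the term $g_{12}xy$ remains.

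To make its coefficient equal to $4$, rescale $\varphi\mapsto\lambda\varphi$ and $\psi\mapsto\mu\psi$, adjusting $c$ to compensate. Under this rescaling $H(\lambda x,\mu y)$ has mixed coefficient $4\lambda\mu$, so the condition is $\lambda\mu=g_{12}/4$. This is solvable because $g_{12}=\partial_x\partial_y G(0,0)\neq0$ by~\eqref{e:phase-nonlinear}. If $g_{12}<0$, we flip the sign of one of $\varphi,\psi$ and absorb the resulting sign into $c$.

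The main subtlety is that the higher-order parts of $\varphi,\psi$ feed back into lower-order terms after composition. We handle this by solving the matching order by order (a triangular system) at degrees $0,1,2$. Only finitely many Taylor coefficients of $\varphi,\psi$ are involved, and they can be taken polynomial.

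\textbf{Step 3: integrating back.} Define $\widetilde\Theta_2$ on a small neighborhood by
$$
c\,\widetilde\Theta_2(\varphi(x),\psi(y)):=\Theta_1(x,y)-\Theta_1(x,0)-\Theta_1(0,y)+\Theta_1(0,0),
$$
which corresponds to $a(x)=\Theta_1(x,0)-\Theta_1(0,0)$ and $b(y)=\Theta_1(0,y)$. Then $\widetilde\Theta_2$ vanishes on both axes. By Step~2, $\partial_x\partial_y\widetilde\Theta_2=1+4xy+O(3)$, and integrating in $x$ and $y$ from the axes gives $\widetilde\Theta_2=xy+x^2y^2+O(|(x,y)|^5)$.

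Since $\varphi'(0),\psi'(0)\neq0$, shrinking to small intervals $\mathcal V,\mathcal W$ makes $\varphi,\psi$ diffeomorphisms onto their images, so all the requirements of Lemma~\ref{l:FUP-gauge} hold. The main obstacle is the order-by-order bookkeeping in Step~2. The only genuinely nontrivial input is $g_{12}\neq0$, which is exactly the nonlinear phase condition~\eqref{e:phase-nonlinear}.
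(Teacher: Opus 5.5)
Your proof is correct, but it is organized differently from the paper's. The paper works with the Taylor coefficients of the phase itself. After removing the axis values and normalizing $\partial_x\partial_y\Theta(0,0)=1$, it kills $c_{21},c_{12},c_{31},c_{13}$ with the explicit cubic substitutions $x\mapsto x-c_{21}x^2+(2c_{21}^2-c_{31})x^3$ (and similarly in $y$). It then reads off the surviving $x^2y^2$ coefficient $\tilde c=c_{22}-c_{12}c_{21}$ and rescales $x$ by $\tilde c$. You instead normalize $G=\log|\partial_x\partial_y\Theta|$ through second order and integrate back from the axes.

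The advantage of your route is that $G$ transforms additively under gauge transformations, so the conditions decouple. The constant is absorbed by $c$, and the pure-$x$ and pure-$y$ terms by $\log|\varphi'|$ and $\log|\psi'|$. Since the target $H=4uv+\mathcal O(|(u,v)|^3)$ vanishes to second order, $H(\varphi(x),\psi(y))$ contributes only $4\varphi'(0)\psi'(0)\,xy$ through order~2. So the ``feedback'' you single out as the main obstacle does not occur in your formulation: the system is diagonal, not just triangular. Those cross-terms are exactly what the $2c_{21}^2$, $2c_{12}^2$ corrections handle in the paper's direct computation.

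Your version also makes the role of \eqref{e:phase-nonlinear} transparent. Your $g_{12}=\partial_x\partial_y\log|\partial_x\partial_y\Theta|(0,0)$ equals $4(c_{22}-c_{12}c_{21})=4\tilde c$ in the paper's notation, a computation the paper leaves implicit when it asserts $\tilde c\neq0$. The paper's route, in turn, gives explicit polynomial changes of variables and a closed-form invariant, with no logarithm or integration step.

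One small point: when $\lambda\mu<0$, write $\log|\varphi'|,\log|\psi'|$ and choose the sign of $c$ so that $\partial_x\partial_y\widetilde\Theta_2(0,0)=+1$. Your remark about absorbing the sign into $c$ already covers this.
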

%%%%%%%%%%%%%%%%%%%%%%%%%%%%%%%%%%%%%%%%%%%%%%%%%%%%%%%%%%%%%%%%%%%%%%%%%%%%%%%%
\begin{proof}
Shifting $x,y$, we may assume that $(x_1,y_1)=(0,0)$.
Subtracting $\Theta_1(x,0)+\Theta_1(0,y)-\Theta_1(0,0)$ from~$\Theta_1$,
and dividing the result by the constant $\partial_x\partial_y\Theta_1(0,0)$,
we may pass to a gauge equivalent function $\Theta_3$ satisfying
$$
\Theta_3(x,0)=\Theta_3(0,y)=0,\quad
\partial_x\partial_y\Theta_3(0,0)=1.
$$
The function $\Theta_3$ has the following Taylor expansion at $(0,0)$:
$$
\Theta_3(x,y)=xy+c_{21}x^2y+c_{12}xy^2+c_{31}x^3y+c_{22}x^2y^2+c_{13}xy^3
+\mathcal O(|(x,y)|^5).
$$
We make the change of variables
$$
\Theta_4(x,y):=\Theta_3(x-c_{21} x^2+(2c_{21}^2-c_{31}) x^3,y-c_{12} y^2+(2c_{12}^2-c_{13}) y^3)
$$
and the function $\Theta_4$ has the following Taylor expansion at $(0,0)$:
$$
\Theta_4(x,y)=xy+(c_{22}-c_{12}c_{21})x^2y^2+\mathcal O(|(x,y)|^5).
$$
By the nonlinear phase condition~\eqref{e:phase-nonlinear}, the coefficient $\tilde c:=c_{22}-c_{12}c_{21}$ is nonzero. We make the final gauge transformation
$$
\Theta_2(x,y)=\tilde c\,\Theta_4(x/\tilde c,y)
$$
and $\Theta_2$ has the Taylor expansion~\eqref{e:gauge-taylor}.
\end{proof}
%%%%%%%%%%%%%%%%%%%%%%%%%%%%%%%%%%%%%%%%%%%%%%%%%%%%%%%%%%%%%%%%%%%%%%%%%%%%%%%%

%%%%%%%%%%%%%%%%%%%%%%%%%%%%%%%%%%%%%%%%%%%%%%%%%%%%%%%%%%%%%%%%%%%%%%%%%%%%%%%%
\subsubsection{A reduced form of FUP}

Combining Lemmas~\ref{l:FUP-gauge} and~\ref{l:gauge-reduce}, we see that
Proposition~\ref{l:FUP-reduced-1} (and thus Theorem~\ref{t:FUP-intro-general}) reduces to showing that
for each $\Theta\in C^\infty(\mathcal U;\mathbb R)$ satisfying~\eqref{e:gauge-taylor}
and any $\delta,\rho,\beta\in[0,1]$ satisfying~\eqref{e:delta-beta-rho},
$\FUP(\delta,\rho,\beta)$ holds for $\Theta$ near~$(0,0)$.

To make the localization near $0$ explicit, take a small ($h$-independent) parameter $r_0>0$
and define the intervals
\begin{equation}
  \label{e:I-0-def}
I_0:=[-r_0,r_0],\quad
I'_0:=[-0.4r_0,0.4r_0],\quad
I''_0:=[-0.1r_0,0.1r_0].
\end{equation}
It suffices to show that for any $\Theta$ satisfying~\eqref{e:gauge-taylor}
and $r_0>0$ small enough, we have the estimate~\eqref{e:FUP-defined-again}
when $\supp\chi\subset (I''_0)^2$. The estimate~\eqref{e:FUP-defined-again}
stays the same if we replace $X,Y$ by $X\cap I''_0$, $Y\cap I''_0$.
By Lemma~\ref{l:AD-split} and Lemma~\ref{l:AD-upper}, we can write
$$
X\cap I''_0\subset \bigsqcup_{n=1}^N X_n
$$
where $N$ is bounded and each $X_n$ is contained in a $0.1r_0$-interval
and is $\delta$-dimensional AD regular on scales~$C_{\AD}h^{\rho}$ to~1.
We may assume that each $X_n$ is contained in $I'_0$. 
Decomposing the set $Y$ in a similar way, we can reduce to
the case when $X,Y\subset I'_0$. 

The above discussion reduces Theorem~\ref{t:FUP-intro-general} to the following
%%%%%%%%%%%%%%%%%%%%%%%%%%%%%%%%%%%%%%%%%%%%%%%%%%%%%%%%%%%%%%%%%%%%%%%%%%%%%%%%
\begin{proposition}
  \label{l:FUP-reduced-2}
Let $\mathcal U\subset\mathbb R^2$ be an open set containing $(0,0)$
and $\Theta\in C^\infty(\mathcal U;\mathbb R)$ have the following Taylor expansion
at $(0,0)$:
\begin{equation}
  \label{e:phase-cooked}
\Theta(x,y)=xy+x^2y^2+\mathcal O(|(x,y)|^5)\quad\text{as }(x,y)\to (0,0).
\end{equation}
Assume that $\delta,\rho,\beta\in [0,1]$ satisfy~\eqref{e:delta-beta-rho}.
Then there exists $r_0>0$, depending on~$\Theta$, such that for each $\chi\in\CIc(\mathcal U)$ with $\supp\chi\subset (I''_0)^2$
and each $C_{\AD}\geq 1$, there exists a constant $C$ such that the following holds.
Let $h\in (0,1]$ and $X,Y\subset I'_0$ be $\delta$-dimensional AD regular sets on scales $C_{\AD}h^\rho$ to~1
with constant $C_{\AD}$. Then
\begin{equation}
  \label{e:FUP-reduced-2}
\|\indic_X\mathcal B(h)\indic_Y\|_{L^2\to L^2}\leq Ch^\beta
\end{equation}
where $\mathcal B(h)$ is defined by~\eqref{e:B-defined-general}.
\end{proposition}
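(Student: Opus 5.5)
We would prove Proposition~\ref{l:FUP-reduced-2} with a $TT^*$ argument, turning the operator norm into an additive-type energy; that energy is what Proposition~\ref{l:additive-bound} is meant to bound. First we discretize. Put $r:=h^\rho$ and cover $X,Y$ by the grid intervals $\mathcal D_r(X)$, $\mathcal D_r(Y)$. By Lemma~\ref{l:AD-cover} there are $\lesssim h^{-\rho\delta}$ of each. Next we pass to a higher power,
$$
\|\indic_X\mathcal B\indic_Y\|^{4}=\|(\indic_X\mathcal B\indic_Y\mathcal B^*\indic_X)^2\|.
$$
We estimate this norm by a Schur test, using the explicit kernel of $\mathcal B\indic_Y\mathcal B^*$. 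Its kernel is an oscillatory integral $\int_Y e^{\frac ih(\Theta(x,y)-\Theta(x',y))}\chi\bar\chi\,dy$. We expand to fourth order with $|x-x'|$ treated as a free variable.

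Squaring and integrating over $y$ pairs leads to the following count. We need the number of quadruples $(y_1,y_2,y_3,y_4)\in Y^4$, up to $r$-resolution, for which the phase differences nearly cancel. With the normal form~\eqref{e:phase-cooked}, this is the condition that
$$
\partial_x\Theta(x,y_1)-\partial_x\Theta(x,y_2)\approx \partial_x\Theta(x,y_3)-\partial_x\Theta(x,y_4)
$$
for $x$ in an $h/r$-neighborhood of $X$. Since $\partial_x\Theta=y+2xy^2+\dots$, the condition is a family of additive relations $(y_1-y_2-y_3+y_4)+2x(y_1^2-y_2^2-y_3^2+y_4^2)\approx 0$. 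It is parametrized by $x\in X$, which ranges over a $\delta$-dimensional set of directions. So the energy becomes an incidence count: the points $(y_1-y_2-y_3+y_4,\;y_1^2-y_2^2-y_3^2+y_4^2)$ in the plane are projected in directions $(1,2x)$, $x\in X$. The $x^2y^2$ term, which exists because of the nonlinear phase condition~\eqref{e:phase-nonlinear}, is exactly what makes these directions genuinely vary.

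The key analytic input is a two-ends Furstenberg / projection bound, namely \cite[Theorem~4.4]{ORegan-Wu-Yi}. It applies to the set $\{(y_1+y_4-y_2-y_3,\,y_1^2+y_4^2-y_2^2-y_3^2)\}$, or more simply to pairs $(y_1-y_2,\,y_1^2-y_2^2)$. The Katz--Tao (nonconcentration) hypotheses on these point sets are verified with the moderate-region lemmas of \S\ref{s:moderate}. The preimages of tubes and squares are regions bounded by curves like $y_2=y_1+c$ or $y_1^2-y_2^2=c$. These curves are pairwise parallel or transversal, which we check with Lemma~\ref{l:transversal-der}. Lemma~\ref{l:moderate-split-general} then splits such regions into $O(\log(1/r))$ moderate regions. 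On each moderate region, Lemma~\ref{l:moderate-regular} gives $|\Omega\cap Y^2|_r\lesssim|\Omega|_r^\delta$, which is the rectangular Katz--Tao property.

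Feeding the resulting energy bound, with a gain of roughly $h^{\delta/8}$ beyond the trivial count, into the Schur test gives $h^{\frac78(\frac12-\delta)+\frac1{32}\delta-2\epsilon}$. The $\epsilon$-losses come from $\rho=1-\epsilon$ and from the logarithmic factors. The main obstacle is the precise formulation of the energy so that the O'Regan--Wu--Yi hypotheses (two-ends condition and Frostman-type bounds on both tubes and points) hold uniformly. In particular, we must control the error terms $\mathcal O(|(x,y)|^5)$ by taking $r_0$ small, and we must verify the two-ends condition using AD regularity of $Y$ at all scales between $h^\rho$ and $1$. I would first try to apply the theorem directly to the pair set $\{(y_1-y_2,y_1^2-y_2^2)\}$ at scale $h/r$, and track the exponents.
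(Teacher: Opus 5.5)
\textbf{The reduction from the operator norm to the energy has a genuine gap.} This reduction is the actual content of this proposition; the energy estimate itself is Proposition~\ref{l:additive-bound}.

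A Schur test on the kernel of $(\indic_X\mathcal B\indic_Y\mathcal B^*\indic_X)^2$ does not produce a count with one $x$ and four $y$'s. That kernel contains an integral over an intermediate variable $x''\in X$ of an oscillatory expression. Since $X$ is fractal, you cannot integrate by parts in $x''$. You also cannot replace $\indic_X(x'')$ by a smooth cutoff, because the integrand is not positive. The same obstruction occurs for $(A^*A)^2$ with $y''\in Y$.

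The paper uses a different mechanism. It bounds $\|\indic_X\mathcal B(h)\indic_Y\|_{L^\infty\to L^4}$, where positivity of $|\mathcal B(h)\indic_Y f|^4$ allows replacing $\indic_X$ by smooth bumps on a cover of $X$ by $h^{1/2}$-intervals. Expanding the fourth power gives a kernel $F_J(\vec y)$, which is an oscillatory integral in $x$ over an $h^{1/2}$-window. It is $\mathcal O(h^{-3/2})$ and negligible unless $|\partial_x\vec\Theta(x_0,\vec y)|\le h^{1/2-\epsilon}$ (Lemmas~\ref{l:ibp} and~\ref{l:Linfty-L4-interval}). This yields $\|\cdot\|_{L^\infty\to L^4}\lesssim h^{-1/2}E^{1/4}$ with $E:=|(X(h^{1/2})\times Y^4)\cap\mathcal E_{h^{1/2-\epsilon}}(\Theta)|$. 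Interpolating with $\|\mathcal B(h)\|_{L^1\to L^\infty}\lesssim h^{-1/2}$ and using H\"older gives $\|\indic_X\mathcal B(h)\indic_Y\|_{L^2\to L^2}\lesssim h^{-1/2}|X|^{3/8}E^{1/8}$ (Lemma~\ref{l:Linfty-L4}).

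\textbf{Your scales are also off.} With $r:=h^\rho$, an $h/r=h^{\epsilon}$-neighborhood of $X$ is essentially all of $I'_0$, so the fractal structure in $x$ is lost. An incidence bound at scale $h^\epsilon$ gains only $h^{\mathcal O(\epsilon)}$. The uncertainty principle in the $x$-integration forces the balanced scale $h^{1/2}$. One then passes to $\widehat X=X(2h^{1/2})$, $\widehat Y=Y(h^{1/2})$ at a dyadic scale $r\sim h^{1/2-\epsilon}$, at cost $h^{-2}\mathbf M_{h^{1/2}}(Y)^4\lesssim h^{2(1-\delta)-4\epsilon}$, and applies Proposition~\ref{l:additive-bound}.

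\textbf{Your final exponent is asserted rather than derived.} Up to $h^{\mathcal O(\epsilon)}$ factors, $\frac78(\frac12-\delta)$ is exactly what $h^{-1/2}|X|^{3/8}E^{1/8}$ gives with the trivial bound $E\lesssim h^{2(1-\delta)}r^{5-4\delta}$. The extra $\frac1{32}\delta$ is one eighth of the energy gain $r^{\delta/2}\approx h^{\delta/4}$, not of $h^{\delta/8}$.

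\textbf{On the energy side, your plan omits the steps that make Corollary~\ref{c:OWY} applicable.}
\begin{itemize}
\item The curves are $F_{\tilde y}=\partial_x\Theta(\cdot,y_1)+\partial_x\Theta(\cdot,y_2)$, incident to the squares of $\mathcal D_r(X\times[-1,1])$.
\item The jet map has Jacobian $4|y_1-y_2|$. This forces the decomposition into the sets $\Sigma_{\tilde r}$ and pigeonholing in the multiplicity $m$ and in $w$.
\item The result is the geometric mean of the O'Regan--Wu--Yi bound and the trivial incidence bound, in which the $\tilde r$-dependence cancels.
\end{itemize}
Your alternative grouping $(y_1-y_2,y_1^2-y_2^2)$ sends the entire diagonal to a single point (the zero function). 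Katz--Tao nonconcentration then fails there unless the decomposition in $|y_1-y_2|$ is carried all the way down to scale $r$.
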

%%%%%%%%%%%%%%%%%%%%%%%%%%%%%%%%%%%%%%%%%%%%%%%%%%%%%%%%%%%%%%%%%%%%%%%%%%%%%%%%

%%%%%%%%%%%%%%%%%%%%%%%%%%%%%%%%%%%%%%%%%%%%%%%%%%%%%%%%%%%%%%%%%%%%%%%%%%%%%%%%
\subsection{Preparation for the reduction}

Before reducing Proposition~\ref{l:FUP-reduced-2} to an additive energy bound,
we need some preparatory work. We first introduce some notation.
Denote
\begin{equation}
  \label{e:vec-y}
\vec y=(y_1,y_2,y'_1,y'_2)\in \mathbb R^4,\quad
d\vec y:=dy_1dy_2dy'_1dy'_2.
\end{equation}
Given $\Theta\in C^\infty(I_0^2;\mathbb R)$, define the following function
for $x\in I_0$, $\vec y\in I_0^4$:
\begin{equation}
  \label{e:Theta-vec-def}
\vec\Theta(x,\vec y):=\Theta(x,y_1)+\Theta(x,y_2)
-\Theta(x,y'_1)-\Theta(x,y'_2).
\end{equation}
Next, for $r>0$ define the set
\begin{equation}
  \label{e:E-r-def}
\mathcal E_r(\Theta):=\{(x,\vec y)\in I_0^5\colon
|\partial_x\vec\Theta(x,\vec y)|\leq r\}.
\end{equation}

%%%%%%%%%%%%%%%%%%%%%%%%%%%%%%%%%%%%%%%%%%%%%%%%%%%%%%%%%%%%%%%%%%%%%%%%%%%%%%%%
\subsubsection{Nonstationary phase}

We will use the following nonstationary phase lemma, similar to~\cite[Lemma~5.1]{hgap}.
%%%%%%%%%%%%%%%%%%%%%%%%%%%%%%%%%%%%%%%%%%%%%%%%%%%%%%%%%%%%%%%%%%%%%%%%%%%%%%%%
\begin{lemma}
  \label{l:ibp}
Assume that $\mathcal U\subset \mathbb R$ is an open set,
$\Phi\in C^\infty(\mathcal U;\mathbb R)$, $a\in \CIc(\mathcal U)$, and
the following inequalities hold for some positive numbers $R_1, R_2,(C_j)_{j\geq 0}$:
\begin{align}
  \label{e:ibpa-1}
|\partial_x\Phi(x)|&\geq C_0^{-1}R_1\quad\text{for all }x\in\supp a;
\\
  \label{e:ibpa-2}
|\partial_x^{j+1}\Phi(x)|&\leq C_j R_1R_2^j\quad\text{for all }x\in\supp a,\
j\geq 1;
\\
  \label{e:ibpa-3}
|\partial_x^j a(x)|&\leq C_j R_2^j\quad\text{for all }x\in \mathcal U,\
j\geq 0.
\end{align}
Then we have for each $N\geq 0$
\begin{equation}
  \label{e:ibp}
\bigg|\int_{\mathbb R} e^{i\Phi(x)}a(x)\,dx\bigg|\leq C'_N\Big({R_2\over R_1}\Big)^N|\supp a|
\end{equation}
where the constant $C'_N$ depends only on $N,C_0,C_1,\dots,C_N$.
\end{lemma}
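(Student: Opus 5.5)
The plan is the classical repeated integration by parts with the transpose operator
$$
L:=\frac{1}{i\,\partial_x\Phi}\,\partial_x,\qquad L\big(e^{i\Phi}\big)=e^{i\Phi},\qquad
L^t a:=-\partial_x\Big(\frac{a}{i\,\partial_x\Phi}\Big).
$$
We work on a neighborhood of $\supp a$ where $\partial_x\Phi\neq0$; by \eqref{e:ibpa-1} and continuity such a neighborhood exists. Since $a$ is compactly supported, integrating by parts $N$ times with no boundary terms gives
$$
\int_{\mathbb R} e^{i\Phi}a\,dx=\int_{\mathbb R} e^{i\Phi}(L^t)^Na\,dx .
$$
The integrand on the right is supported in $\supp a$. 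It therefore suffices to prove the pointwise bound
$$
|(L^t)^N a(x)|\leq C'_N (R_2/R_1)^N\quad\text{on }\supp a,
$$
and \eqref{e:ibp} follows after integrating over $\supp a$.

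To get the pointwise bound, we use a scaling/induction argument. Set $\phi:=\partial_x\Phi$, so that on $\supp a$ we have $|\phi|\geq C_0^{-1}R_1$ and $|\partial_x^j\phi|\leq C_jR_1R_2^j$ for $j\geq1$. Consider the class of functions that are finite sums of terms
$$
b\cdot\frac{\partial_x^{k_1}\phi\cdots\partial_x^{k_m}\phi}{\phi^{m+n}},
$$
with $b$ a derivative of $a$ and orders matching appropriately. By induction on $n$ we show that $(L^t)^n a$ is a linear combination, with universal coefficients, of terms
$$
\partial_x^{j_0}a\;\prod_{\ell=1}^m\partial_x^{k_\ell+1}\Phi\;\cdot\;(\partial_x\Phi)^{-n-m},
\qquad k_\ell\geq1,\quad j_0+\sum_\ell k_\ell=n .
$$
The inductive step is direct: applying $L^t$ divides by $\phi$ and then differentiates once. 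The derivative lands either on $\partial^{j_0}a$, on one of the factors $\partial^{k_\ell+1}\Phi$, or on the power $\phi^{-n-m}$; the last case creates a new factor $\partial_x^2\Phi$ with $k=1$ and raises the power by one. In each case the total "derivative count" $j_0+\sum k_\ell$ increases by exactly one, and the exponent bookkeeping is preserved.

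Each such term is then estimated using \eqref{e:ibpa-1}–\eqref{e:ibpa-3}:
$$
C_{j_0}R_2^{j_0}\prod_\ell \big(C_{k_\ell}R_1R_2^{k_\ell}\big)\,(C_0R_1^{-1})^{n+m}
\lesssim R_2^{n}R_1^{m}R_1^{-n-m}=(R_2/R_1)^n .
$$
The number of terms, the coefficients, and the constants depend only on $n$ and $C_0,\dots,C_n$. This gives the claim with $n=N$.

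The only step needing care is the combinatorial bookkeeping in the induction. Checking that every derivative of $\partial_x\Phi$ that appears comes with one extra factor of $R_1$ in the numerator, matched by one extra power of $\phi$ in the denominator, is what produces exactly $(R_2/R_1)^N$; it is routine but is the main point of the proof.
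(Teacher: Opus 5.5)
Your proposal is correct and follows the paper's proof essentially step for step. You use the same operator $L=-i(\partial_x\Phi)^{-1}\partial_x$ with transpose $i\partial_x(\cdot/\partial_x\Phi)$, and the same structural description of $(L^t)^Na$ as a combination of terms $(\partial_x\Phi)^{-N-K}\prod_{k}\partial_x^{j_k+1}\Phi\,\partial_x^{j_0}a$ with $j_0+\sum_k j_k=N$. Each term is then bounded by a constant times $(R_2/R_1)^N$ and multiplied by $|\supp a|$, exactly as in the paper. The paper leaves two details implicit that you fill in: the explicit induction behind the structural claim, and the restriction to a neighborhood of $\supp a$ where $\partial_x\Phi\neq0$.
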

%%%%%%%%%%%%%%%%%%%%%%%%%%%%%%%%%%%%%%%%%%%%%%%%%%%%%%%%%%%%%%%%%%%%%%%%%%%%%%%%
\begin{proof}
Consider the following first order differential operator on $\mathcal U$:
$$
\mathcal L:=-i(\partial_x\Phi(x))^{-1}\partial_x.
$$
Then $e^{i\Phi}=\mathcal Le^{i\Phi}$, so we can integrate by parts $N$ times to get
$$
\int_{\mathbb R} e^{i\Phi(x)}a(x)\,dx
=\int_{\mathbb R} \big(\mathcal L^N e^{i\Phi(x)}\big)a(x)\,dx
=\int_{\mathbb R} e^{i\Phi(x)}
(\mathcal L^t)^Na(x)\,dx
$$
where $\mathcal L^t$ is the transpose operator:
$$
\mathcal L^tf(x)=i\partial_x\bigg({f(x)\over\partial_x\Phi(x)}\bigg).
$$
It follows that
\begin{equation}
  \label{e:ibp-1}
\bigg|\int_{\mathbb R} e^{i\Phi(x)}a(x)\,dx\bigg|
\leq |\supp a|\cdot \sup_{x\in\supp a}|(\mathcal L^t)^Na(x)|.
\end{equation}
The function $(\mathcal L^t)^ Na(x)$ is a linear combination with constant
coefficients of terms of the form
$$
(\partial_x\Phi(x))^{-N-K}\Big(\prod_{k=1}^K {\partial_x^{j_k+1}\Phi(x)}\Big)\partial_x^{j_0}a(x)
$$
where $j_0\geq 0$, $j_1,\dots,j_K\geq 1$, and $j_0+j_1+\dots+j_K= N$.
By~\eqref{e:ibpa-1}--\eqref{e:ibpa-3}, the sup-norm of this expression
over $x\in\supp a$ is bounded above by
$$
C'_N R_1^{-N-K}\Big(\prod_{k=1}^K R_1R_2^{j_k}\Big)R_2^{j_0}=C'_N\Big({R_2\over R_1}\Big)^N.
$$
Together with~\eqref{e:ibp-1} this gives~\eqref{e:ibp}.
\end{proof}
%%%%%%%%%%%%%%%%%%%%%%%%%%%%%%%%%%%%%%%%%%%%%%%%%%%%%%%%%%%%%%%%%%%%%%%%%%%%%%%%

%%%%%%%%%%%%%%%%%%%%%%%%%%%%%%%%%%%%%%%%%%%%%%%%%%%%%%%%%%%%%%%%%%%%%%%%%%%%%%%%
\subsubsection{An $L^4$ estimate}

We now bound the norm of $\indic_X\mathcal B(h)\indic_Y$ in terms of the volume
of the set $\mathcal E_r$ introduced in~\eqref{e:E-r-def}. For this it is convenient
to consider the $L^\infty\to L^4$ norm; later in~\eqref{e:L2-L2} we interpolate between
this and the $L^1\to L^\infty$ norm to bound the $L^2\to L^2$ norm.

Recall the notation~\eqref{e:I-0-def}.
We start with the case when $X$ is replaced by a $2h^{\frac12}$-interval:
%%%%%%%%%%%%%%%%%%%%%%%%%%%%%%%%%%%%%%%%%%%%%%%%%%%%%%%%%%%%%%%%%%%%%%%%%%%%%%%%
\begin{lemma}
  \label{l:Linfty-L4-interval}
Let $\mathcal B(h)$ be the operator defined by~\eqref{e:B-defined-general}
with some $\Theta\in C^\infty(I_0^2;\mathbb R)$ and $\supp\chi\subset (I''_0)^2$.
Let $x_0\in I_0$, $Y\subset I_0$, and put $J:=[x_0-h^{\frac12},x_0+h^{\frac12}]$. Then
we have for all $\epsilon>0$ and $N$
\begin{equation}
  \label{e:Linfty-L4-interval}
\|\indic_J \mathcal B(h)\indic_Y\|_{L^\infty\to L^4}^4\leq
Ch^{-\frac32}\big|\{\vec y\in Y^4\colon (x_0,\vec y)\in \mathcal E_{h^{\frac12-\epsilon}}(\Theta)\}\big|+Ch^N.
\end{equation}
Here the constant $C$ depends only on $\epsilon,N,\Theta,\chi$.
\end{lemma}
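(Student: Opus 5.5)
We expand the fourth power directly. For $f\in L^\infty$ with $\|f\|_{L^\infty}\le 1$, set $u:=\mathcal B(h)\indic_Y f$. Then
$$
\|\indic_J u\|_{L^4}^4=\int_J |u(x)|^4\,dx=\int_J u(x)^2\,\overline{u(x)}^2\,dx .
$$
Substituting the kernel of $\mathcal B(h)$ from~\eqref{e:B-defined-general} gives
$$
(2\pi h)^{-2}\int_{Y^4}\bigg(\int_{\mathbb R}\indic_J(x)\,e^{{i\over h}\vec\Theta(x,\vec y)}\,\chi(x,y_1)\chi(x,y_2)\overline{\chi(x,y'_1)\chi(x,y'_2)}\,dx\bigg)f(y_1)f(y_2)\overline{f(y'_1)f(y'_2)}\,d\vec y ,
$$
with $\vec\Theta$ as in~\eqref{e:Theta-vec-def}. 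The sharp cutoff $\indic_J$ cannot be integrated by parts, so we first replace it by a smooth cutoff $a_J(x)=\psi((x-x_0)/h^{1/2})$. Here $\psi\in\CIc((-2,2))$, $\psi\ge 0$, and $\psi=1$ on $[-1,1]$. Since $|u|^4\ge 0$ and $\indic_J\le a_J$, it suffices to bound $\int a_J|u|^4\,dx$, which has the same expansion with $\indic_J$ replaced by $a_J$. The derivatives satisfy $|\partial_x^j a_J|\le C_j h^{-j/2}$. Because $|f|\le 1$, the whole quantity is bounded by
$$
Ch^{-2}\int_{Y^4}|I(\vec y)|\,d\vec y ,
$$
where $I(\vec y)$ denotes the inner $x$-integral.

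We then split $Y^4$ according to the size of $|\partial_x\vec\Theta(x_0,\vec y)|$, with threshold $2h^{\frac12-\epsilon}$.

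\emph{Near-stationary part.} If $|\partial_x\vec\Theta(x_0,\vec y)|\le 2h^{\frac12-\epsilon}$, we use the trivial bound $|I(\vec y)|\le Ch^{1/2}$, coming from the length of $\supp a_J$. This contributes
$$
Ch^{-3/2}\,\big|\{\vec y\in Y^4\colon |\partial_x\vec\Theta(x_0,\vec y)|\le 2h^{\frac12-\epsilon}\}\big| .
$$
The factor $2$ in the threshold is harmless. We can either restate the lemma with $\epsilon$ replaced by $\epsilon/2$, or absorb it for small $h$ since $2h^{\frac12-\epsilon}\le h^{\frac12-\epsilon/2}$. The $\chi$ factors force $y_j\in I''_0$, and if $x_0\notin I''_0$ the integral vanishes, so the restriction to $I_0^5$ in the definition of $\mathcal E$ is automatic.

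\emph{Nonstationary part.} If $|\partial_x\vec\Theta(x_0,\vec y)|\ge 2h^{\frac12-\epsilon}$, we first check that the lower bound persists across the support. For $|x-x_0|\le 2h^{1/2}$ we have $|\partial_x\vec\Theta(x,\vec y)-\partial_x\vec\Theta(x_0,\vec y)|\le Ch^{1/2}$, so $|\partial_x\vec\Theta(x,\vec y)|\ge h^{\frac12-\epsilon}$ on $\supp a_J$ for $h$ small. We apply Lemma~\ref{l:ibp} to $\Phi=\vec\Theta(\cdot,\vec y)/h$ with amplitude $a_J$ times the $\chi$ factors, taking $R_1=R:=h^{-1}|\partial_x\vec\Theta(x_0,\vec y)|\ge h^{-\frac12-\epsilon}$ and $R_2=h^{-1/2}$. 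Condition~\eqref{e:ibpa-3} holds since the $\chi$ derivatives are $O(1)$ and $h^{-1/2}\ge 1$.

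The main obstacle is condition~\eqref{e:ibpa-2}. The higher derivatives $\partial_x^{j+1}\Phi$ are only $O(h^{-1})$, not $O(R_1R_2^j)$ with $R_1=R$ in general. When $R\ge h^{-1}$ this is fine. Otherwise $h^{-1}\le R_1R_2^j$ requires $R\ge h^{-1+j/2}$, which holds for all $j\ge 1$ since $h^{-1+j/2}\le h^{-1/2}\le R$. Hence Lemma~\ref{l:ibp} applies with uniform constants and gives
$$
|I(\vec y)|\le C_N h^{1/2}(R_2/R_1)^N\le C_N h^{1/2}h^{\epsilon N}.
$$
Integrating over $Y^4\subset I_0^4$ and multiplying by $h^{-2}$, this part contributes $O(h^{\epsilon N-3/2})$, which is $O(h^{N'})$ once $N$ is chosen large. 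Adding the two contributions gives~\eqref{e:Linfty-L4-interval}. For $h$ bounded below, say $h\ge h_0$, the estimate is trivial by absorbing everything into $Ch^N$.
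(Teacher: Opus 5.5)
Your proof is correct and is essentially the paper's argument: a smooth cutoff $\chi_J\ge\indic_J$, expansion of the fourth power into an integral over $Y^4$, and Lemma~\ref{l:ibp} with $R_1=h^{-1}|\partial_x\vec\Theta(x_0,\vec y)|$, $R_2=h^{-1/2}$. The paper packages this as the pointwise bound $|F_J(\vec y)|\le C_{\widetilde N}h^{-3/2}(1+h^{-1/2}|\partial_x\vec\Theta(x_0,\vec y)|)^{-\widetilde N}$, splitting at $2\widetilde C h^{1/2}$ so that no smallness of $h$ is needed, and only then splits at $h^{\frac12-\epsilon}$.

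There are two small slips to fix. First, $2h^{\frac12-\epsilon}\le h^{\frac12-\epsilon/2}$ is false for small $h$; use your first option instead, which works because $2h^{\frac12-\epsilon/2}\le h^{\frac12-\epsilon}$ for small $h$. Second, for~\eqref{e:ibpa-1} with $R_1=R$ you need $|\partial_x\vec\Theta(x,\vec y)|\ge\frac12|\partial_x\vec\Theta(x_0,\vec y)|$ on the support, not just $\ge h^{\frac12-\epsilon}$; the same Taylor estimate gives this.
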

%%%%%%%%%%%%%%%%%%%%%%%%%%%%%%%%%%%%%%%%%%%%%%%%%%%%%%%%%%%%%%%%%%%%%%%%%%%%%%%%
\begin{proof}
\noindent 1. Fix a cutoff function
$$
\chi\in \CIc((-2,2);[0,1]),\quad
\chi=1\quad\text{on }[-1,1]
$$
and define the function
$$
\chi_J\in\CIc(\mathbb R),\quad
\chi_J(x)=\chi\Big({x-x_0\over h^{\frac12}}\Big).
$$
Note that $\chi_J=1$ on $J$.

Take $f\in L^\infty(\mathbb R)$. Then, using the notation~\eqref{e:vec-y} and~\eqref{e:Theta-vec-def},
\begin{equation}
  \label{e:Li4i-0}
\begin{aligned}
\|\indic_J\mathcal B(h)\indic_Y f\|_{L^4}^4
&\leq
\int_{\mathbb R}\chi_J(x)|\mathcal B(h)\indic_Y f(x)|^4\,dx
\\
&=\int_{\mathbb R}\chi_J(x)\mathcal B(h)\indic_Y f(x)\mathcal B(h)\indic_Y f(x)
\overline{\mathcal B(h)\indic_Y f(x)\mathcal B(h)\indic_Y f(x)}\,dx
\\
&=\int_{Y^4}F_J(\vec y)f(y_1)f(y_2)\overline{f(y'_1)f(y'_2)}\,d\vec y
\end{aligned}
\end{equation}
where
$$
\begin{aligned}
F_J(\vec y)\,&:=
(2\pi h)^{-2}\int_{\mathbb R}e^{{i\over h}\vec\Theta(x,\vec y)}\vec\chi_J(x,\vec y)\,dx,
\\
\vec\chi_J(x,\vec y)\,&:=
\chi_J(x)\chi(x,y_1)\chi(x,y_2)
\overline{\chi(x,y'_1)\chi(x,y'_2)}.
\end{aligned}
$$

\noindent 2. We claim that for all $\vec y\in I_0^4$ and $\widetilde N$
\begin{equation}
  \label{e:Li4i-1}
|F_J(\vec y)|\leq C_{\widetilde N} h^{-\frac32}\bigg(1+{|\partial_x\vec\Theta(x_0,\vec y)|\over h^{\frac12}}\bigg)^{-\widetilde N}.
\end{equation}
For that we use Lemma~\ref{l:ibp} with
$$
\mathcal U:=I_0^{\circ},\quad
\Phi(x):={\vec\Theta(x,\vec y)\over h},\quad
a(x):=\vec\chi_J(x,\vec y).
$$
Note that $\supp a\subset \supp\chi_J\subset [x_0-2h^{\frac12},x_0+2h^{\frac12}]$,
so $|\supp a|\leq 4h^{\frac12}$ and for all $x\in\supp a$ we have
$$
|\partial_x\vec\Theta(x,\vec y)-\partial_x\vec\Theta(x_0,\vec y)|\leq \widetilde Ch^{\frac12},\quad
\widetilde C:=\max(8\sup|\partial_x^2\Theta|,1).
$$
If $|\partial_x\vec\Theta(x_0,\vec y)|\leq 2\widetilde Ch^{\frac12}$, then~\eqref{e:Li4i-1} is immediate.
Assume now that $|\partial_x\vec\Theta(x_0,\vec y)|\geq 2\widetilde Ch^{\frac12}$. We have for all $x\in \supp a$ and $j\geq 0$
$$
|\partial_x\Phi(x)|\geq {|\partial_x\vec\Theta(x_0,\vec y)|\over 2h},\quad
|\partial_x^{j+1}\Phi(x)|\leq {C_j\over h},\quad
|\partial_x^j a(x)|\leq C_j h^{-\frac12 j}.
$$
The bounds~\eqref{e:ibpa-1}--\eqref{e:ibpa-3} hold with
$$
R_1:={|\partial_x\vec\Theta(x_0,\vec y)|\over h},\quad
R_2:=h^{-\frac12}.
$$
Then~\eqref{e:ibp} gives~\eqref{e:Li4i-1} as needed.

\noindent 3. By~\eqref{e:Li4i-0}, we have
$$
\begin{aligned}
{\|\indic_J\mathcal B(h)\indic_Y f\|_{L^4}^4\over \|f\|_{L^\infty}^4}
\leq 
\int_{Y^4}|F_J(\vec y)|\,d\vec y
\leq &\,
\big|\{\vec y\in Y^4\colon |\partial_x\vec\Theta(x_0,\vec y)|\leq h^{\frac12-\epsilon}\}\big|\sup_{\vec y\in I_0^4}|F_J(\vec y)|
\\
&+ \sup_{\vec y\in I_0^4\colon |\partial_x\vec\Theta(x_0,\vec y)|\geq h^{\frac12-\epsilon}}
|F_J(\vec y)|.
\end{aligned}
$$
By~\eqref{e:Li4i-1}, we see that the first supremum on the right-hand side
is bounded above by $Ch^{-\frac32}$ and the second one is bounded by
$C_{\widetilde N}h^{\epsilon\widetilde N-\frac32}\leq Ch^N$,
where we take $\widetilde N\geq\epsilon^{-1}(N+\frac32)$. This finishes the proof of~\eqref{e:Linfty-L4-interval}.
\end{proof}
%%%%%%%%%%%%%%%%%%%%%%%%%%%%%%%%%%%%%%%%%%%%%%%%%%%%%%%%%%%%%%%%%%%%%%%%%%%%%%%%
We now estimate the norm of $\indic_X\mathcal B(h)\indic_Y$ by summing Lemma~\ref{l:Linfty-L4-interval}
over a collection of $h^{\frac12}$-intervals covering~$X$:
%%%%%%%%%%%%%%%%%%%%%%%%%%%%%%%%%%%%%%%%%%%%%%%%%%%%%%%%%%%%%%%%%%%%%%%%%%%%%%%%
\begin{lemma}
  \label{l:Linfty-L4}
Let $X,Y\subset I_0$. Then
for all $\epsilon>0$ and~$N$
\begin{align}
  \label{e:Linfty-L4}
\|\indic_X\mathcal B(h)\indic_Y\|_{L^\infty\to L^4}
&\leq
Ch^{-\frac12}|(X(h^{\frac12})\times Y^4)\cap \mathcal E_{h^{\frac12-\epsilon}}(\Theta)|^{\frac14}
+Ch^N,
\\
  \label{e:L2-L2}
\|\indic_X\mathcal B(h)\indic_Y\|_{L^2\to L^2}
&\leq
Ch^{-\frac12}|X|^{\frac38}
|(X(h^{\frac12})\times Y^4)\cap \mathcal E_{h^{\frac12-\epsilon}}(\Theta)|^{\frac18}+Ch^N.
\end{align}
Here the constant $C$ depends only on $\epsilon,N,\Theta,\chi$.
\end{lemma}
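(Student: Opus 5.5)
Cover $X$ by $h^{\frac12}$-intervals and sum Lemma~\ref{l:Linfty-L4-interval}, then get~\eqref{e:L2-L2} by Riesz--Thorin interpolation.

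\textbf{Proof of~\eqref{e:Linfty-L4}.} Consider the grid intervals $\mathbf p\in\mathcal D_{h^{1/2}}(X)$. For each one, let $x_{\mathbf p}$ be its center and $J_{\mathbf p}:=[x_{\mathbf p}-h^{\frac12},x_{\mathbf p}+h^{\frac12}]\supset\mathbf p$. Then
$$
\|\indic_X\mathcal B(h)\indic_Y f\|_{L^4}^4\leq\sum_{\mathbf p}\|\indic_{\mathbf p}\mathcal B(h)\indic_Y f\|_{L^4}^4,
$$
and each summand is bounded by Lemma~\ref{l:Linfty-L4-interval} with $x_0=x_{\mathbf p}$. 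There are at most $O(h^{-1/2})$ intervals, so the $Ch^N$ errors sum to $Ch^{N-1/2}$; this is absorbed by relabeling $N$.

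The main step converts the pointwise quantity $|\{\vec y\in Y^4:(x_{\mathbf p},\vec y)\in\mathcal E_{h^{1/2-\epsilon}}\}|$ into an $x$-average. Since $|\partial_x^2\vec\Theta|\leq C$, for $|x-x_{\mathbf p}|\leq h^{\frac12}$ we have
$$
|\partial_x\vec\Theta(x,\vec y)-\partial_x\vec\Theta(x_{\mathbf p},\vec y)|\leq Ch^{\frac12}\leq h^{\frac12-\epsilon}
$$
for $h$ small. Hence $(x_{\mathbf p},\vec y)\in\mathcal E_{h^{1/2-\epsilon}}$ implies $(x,\vec y)\in\mathcal E_{2h^{1/2-\epsilon}}$ for all $x\in\mathbf p$. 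Averaging over $x\in\mathbf p$ (length $h^{\frac12}$) bounds the measure by $h^{-\frac12}|(\mathbf p\times Y^4)\cap\mathcal E_{2h^{1/2-\epsilon}}|$.

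The intervals $\mathbf p$ are nonoverlapping and contained in $X(h^{\frac12})$. Summing therefore gives
$$
\|\indic_X\mathcal B\indic_Y\|^4_{L^\infty\to L^4}\leq Ch^{-2}|(X(h^{\frac12})\times Y^4)\cap\mathcal E_{2h^{1/2-\epsilon}}|+Ch^N.
$$
The factor $2$ is removed by applying Lemma~\ref{l:Linfty-L4-interval} with a slightly smaller $\epsilon'<\epsilon$, since $2h^{1/2-\epsilon'}\leq h^{1/2-\epsilon}$ for small $h$; large $h$ is trivial. Taking fourth roots gives~\eqref{e:Linfty-L4}.

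There is also a boundary issue: $x\in\mathbf p$ may leave $I_0$ when $X$ is near the edge. I expect this to be harmless, since $\chi$ is supported in $(I''_0)^2$ and one may restrict to $\mathbf p$ meeting $I''_0$ up to $O(h^\infty)$ errors, or intersect with $I_0$. This bookkeeping, rather than anything substantive, is the only delicate point.

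\textbf{Proof of~\eqref{e:L2-L2}.} The kernel of $\indic_X\mathcal B(h)\indic_Y$ is bounded by $Ch^{-\frac12}$, and the output is supported on $X$. Hence
$$
\|\indic_X\mathcal B\indic_Y\|_{L^1\to L^2}\leq Ch^{-\frac12}|X|^{\frac12}.
$$
Now interpolate between $L^1\to L^2$ and $L^\infty\to L^4$ (exponents $1/p=1/2$, i.e.\ $\theta=1/2$ with $1/2=\tfrac12\cdot1+\tfrac12\cdot0$). This gives $L^2\to L^q$ with $1/q=\tfrac14+\tfrac18=\tfrac38$.

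Then use Hölder on $X$:
$$
\|\indic_X g\|_{L^2}\leq |X|^{\frac12-\frac38}\|g\|_{L^{8/3}} = |X|^{\frac18}\|g\|_{L^{8/3}}.
$$
Collecting powers, $|X|^{\frac14}\cdot|X|^{\frac18}=|X|^{\frac38}$. For $h$, interpolation gives $(h^{-\frac12})^{\frac12}(h^{-\frac12})^{\frac12}=h^{-\frac12}$, and the remaining factor is $|\cdots|^{\frac18}$. This is exactly~\eqref{e:L2-L2}. The $h^N$ term passes through interpolation as a separate term, using $(a+b)^{1/2}\leq a^{1/2}+b^{1/2}$ and relabeling $N$.
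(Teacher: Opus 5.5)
Your proof is correct and follows essentially the same route as the paper: cover $X$ by $h^{1/2}$-intervals, apply Lemma~\ref{l:Linfty-L4-interval}, turn the pointwise-in-$x_0$ measure into an integral over $x$, sum using that the intervals lie in $X(h^{1/2})$, then interpolate against the $Ch^{-1/2}$ kernel bound and apply H\"older on $X$. The one difference is that the paper applies Lemma~\ref{l:Linfty-L4-interval} with $x_0=x$ for \emph{every} $x$ in the interval, which is allowed because an $h^{1/2}$-interval lies in $[x-h^{1/2},x+h^{1/2}]$ for each of its points, and then averages; this makes your perturbation step and the $\epsilon'<\epsilon$ adjustment unnecessary.
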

%%%%%%%%%%%%%%%%%%%%%%%%%%%%%%%%%%%%%%%%%%%%%%%%%%%%%%%%%%%%%%%%%%%%%%%%%%%%%%%%
\begin{proof}
1. Let $x_1,\dots,x_L\in X$ be a maximal $\frac12h^{\frac12}$-separated set
and define
$$
J_\ell:=[x_\ell-\tfrac12h^{\frac12},x_\ell+\tfrac12h^{\frac12}].
$$
Then $X\subset\bigcup_{\ell=1}^L J_\ell$ and thus
$$
\|\indic_X\mathcal B(h)\indic_Y\|_{L^\infty\to L^4}^4
\leq \sum_{\ell=1}^L\|\indic_{J_\ell}\mathcal B(h)\indic_Y\|_{L^\infty\to L^4}^4.
$$
For each $x\in J_\ell\cap I_0$, we have $J_\ell\subset [x-h^{\frac12},x+h^{\frac12}]$ and thus
by Lemma~\ref{l:Linfty-L4-interval}
$$
\|\indic_{J_\ell} \mathcal B(h)\indic_Y\|_{L^\infty\to L^4}^4\leq
Ch^{-\frac32}\big|\{\vec y\in Y^4\colon (x,\vec y)\in \mathcal E_{h^{\frac12-\epsilon}}(\Theta)\}\big|
+Ch^{4N+1}.
$$
Integrating over $x\in J_\ell\cap I_0$, we get
$$
\|\indic_{J_\ell} \mathcal B(h)\indic_Y\|_{L^\infty\to L^4}^4\leq
Ch^{-2}\big|
(J_\ell\times Y^4)\cap \mathcal E_{h^{\frac12-\epsilon}}(\Theta)
\big|+Ch^{4N+1}.
$$
Summing over $\ell$, and using that each point lies in at most 3 intervals $J_\ell$ and
$\bigcup_{\ell=1}^L J_\ell\subset X(h^{\frac12})$,
we then have
$$
\|\indic_X\mathcal B(h)\indic_Y\|_{L^\infty\to L^4}^4
\leq
Ch^{-2}\big|
(X(h^{\frac12})\times Y^4)\cap \mathcal E_{h^{\frac12-\epsilon}}(\Theta)
\big|
+Ch^{4N}
$$
which implies~\eqref{e:Linfty-L4}.

\noindent 2. By H\"older's inequality, we have the bounds
\begin{align}
  \label{e:Linfty-L2}
\|\indic_X\mathcal B(h)\indic_Y\|_{L^\infty\to L^2}
&\leq
|X|^{\frac14}\,\|\indic_X\mathcal B(h)\indic_Y\|_{L^\infty\to L^4},
\\
  \label{e:L1-L2}
\|\indic_X\mathcal B(h)\indic_Y\|_{L^1\to L^2}
&\leq
|X|^{\frac12}\,\|\indic_X\mathcal B(h)\indic_Y\|_{L^1\to L^\infty}.
\end{align}
Interpolating between these gives
\begin{equation}
  \label{e:bizzy}
\|\indic_X\mathcal B(h)\indic_Y\|_{L^2\to L^2}
\leq
|X|^{\frac38}\,
\|\indic_X\mathcal B(h)\indic_Y\|_{L^\infty\to L^4}^{\frac12}\,
\|\indic_X\mathcal B(h)\indic_Y\|_{L^1\to L^\infty}^{\frac12}.
\end{equation}
From the definition~\eqref{e:B-defined-general} of~$\mathcal B(h)$, we have
$$
\|\mathcal B(h)\|_{L^1\to L^\infty}\leq Ch^{-\frac12}.
$$
Together with~\eqref{e:Linfty-L4} and~\eqref{e:bizzy} this gives~\eqref{e:L2-L2}.
\end{proof}
%%%%%%%%%%%%%%%%%%%%%%%%%%%%%%%%%%%%%%%%%%%%%%%%%%%%%%%%%%%%%%%%%%%%%%%%%%%%%%%%

%%%%%%%%%%%%%%%%%%%%%%%%%%%%%%%%%%%%%%%%%%%%%%%%%%%%%%%%%%%%%%%%%%%%%%%%%%%%%%%%
\subsection{Proof of the reduction}

We now state the additive energy bound to which Proposition~\ref{l:FUP-reduced-2} will be reduced.
It is proved in~\S\ref{s:additive-bound}.
%%%%%%%%%%%%%%%%%%%%%%%%%%%%%%%%%%%%%%%%%%%%%%%%%%%%%%%%%%%%%%%%%%%%%%%%%%%%%%%%
\begin{proposition}
  \label{l:additive-bound}
Let $\mathcal U\subset\mathbb R^2$ be an open set containing $(0,0)$
and $\Theta\in C^\infty(\mathcal U;\mathbb R)$ have the Taylor expansion~\eqref{e:phase-cooked}.
Then there exists $r_0>0$, depending on~$\Theta$, such that, using the notation~\eqref{e:I-0-def}, we have
$I_0^2\subset\mathcal U$ and the following holds. Assume that
$\delta\in (0,\frac23]$, $r\leq 1$ is a dyadic number, and
$X,Y\subset I'_0$ are $\delta$-dimensional AD regular sets on scales~$r$ to~1
with constant $C_{\AD}$. Then we have for each $\epsilon>0$
\begin{equation}
\label{e:additive-bound}
|(X\times Y^4)\cap \mathcal E_r(\Theta)|\leq C r^{5-\frac72\delta-\epsilon}
\end{equation}
where the constant $C$ depends only on~$\Theta,\delta,\epsilon,C_{\AD}$ and
$\mathcal E_r(\Theta)$ was defined in~\eqref{e:E-r-def}.
\end{proposition}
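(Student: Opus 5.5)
The plan is to reinterpret $(X\times Y^4)\cap\mathcal E_r(\Theta)$ as an incidence count between a $2\delta$-dimensional planar set and a $\delta$-dimensional family of directions, and then to apply the two-ends Furstenberg inequality \cite[Theorem~4.4]{ORegan-Wu-Yi}. Write $\phi_x(y):=\partial_x\Theta(x,y)$. By~\eqref{e:phase-cooked},
$$
\phi_x(y)=y+2xy^2+\mathcal O(|(x,y)|^4).
$$
For a pair $p=(y_1,y_2)\in Y^2$ put $F_x(p):=\phi_x(y_1)+\phi_x(y_2)$. Then $(x,\vec y)\in\mathcal E_r$ means $|F_x(p)-F_x(q)|\leq r$ with $q=(y'_1,y'_2)$. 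To leading order, $F_x(p)=\langle \Psi(p),(1,2x)\rangle$ with $\Psi(y_1,y_2)=(y_1+y_2,\,y_1^2+y_2^2)$. So the condition says that $\Psi(p)$ and $\Psi(q)$ lie in a common $r$-tube whose direction is determined by $x\in X$. The nonlinear phase condition is exactly what makes $\Psi$ (rather than the linear map $(y_1,y_2)\mapsto y_1+y_2$ of the Fourier case) appear. Discretizing at scale $r$, the left-hand side of~\eqref{e:additive-bound} is $\lesssim r^5\sum_{x}\sum_{T\parallel x}\#(P\cap T)^2$, where $x$ runs over $\mathcal D_r(X)$, $P:=\Psi(Y^2)$ is discretized at scale $r$, and $T$ runs over $r$-tubes in direction $x$. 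The target count is $r^{-3\frac12\delta-\epsilon}$. Since $\#X\sim r^{-\delta}$ and $\#P\sim r^{-2\delta}$, this amounts to saying that for most directions $x$ the projection of $P$ has dimension $\frac{3}{2}\delta=\frac{s+t}{2}$, with $s=2\delta$ and $t=\delta$. This is the Furstenberg-type exponent, and it is at most $1$ precisely because $\delta\leq\frac23$.

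The key steps, in order, are as follows. (1) Remove the near-diagonal part. Decompose dyadically in $|y_1-y_2|\sim 2^{-k}$, and similarly for $q$, at a cost of $\log(1/r)^2\lesssim r^{-\epsilon}$. On each piece the Jacobian of $\Psi$ is $2(y_2-y_1)$, so $\Psi$ is bi-Lipschitz after rescaling; the pieces with $2^{-k}$ tiny are handled by the trivial bound $r^{5-4\delta}$ times the small measure of that piece. (2) Verify the hypotheses of \cite[Theorem~4.4]{ORegan-Wu-Yi}. The required rectangular Katz--Tao condition on $P$ says that every $a\times b$ rectangle $R$ satisfies $\#(P\cap R)\lesssim r^{-\epsilon}(ab/r^2)^{\delta}$. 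Its preimage $\Psi^{-1}(R)$ is a region bounded by four curves $\{y_1+y_2=c\}$ and $\{y_1+y_2+\lambda(y_1^2+y_2^2)=c\}$. Each pair of these curves is either parallel or transversal by Lemma~\ref{l:transversal-der}, since the second derivatives differ. Lemma~\ref{l:moderate-split-general} then covers $\Psi^{-1}(R)$ by $\mathcal O(\log(1/r))$ moderate regions, and Lemma~\ref{l:moderate-regular} gives $|\Omega\cap Y^2|_r\lesssim|\Omega|_r^{\delta}$, which is the claimed bound. The $\delta$-dimensional Katz--Tao condition on the direction set $X$, and the two-ends (nonconcentration) condition on the point–tube incidences, come from AD regularity via Lemma~\ref{l:AD-cover}. (3) Apply the theorem to bound $\sum_{x,T}\#(P\cap T)^2$ by $r^{-\frac72\delta-\epsilon}$, and undo the discretization.

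The main obstacle is the error term $\mathcal O(|(x,y)|^4)$. Because of it, the sets $\{p:F_x(p)=c\}$ are only approximately preimages of lines, so the problem is genuinely one about a one-parameter family of curves rather than straight tubes. My first attempt would be to absorb the error by a coordinate change. Using the gauge freedom of Lemma~\ref{l:FUP-gauge}, I would try to choose $\tilde\Psi$ and a reparametrization $x\mapsto\xi(x)$ so that $F_x(p)=\langle\tilde\Psi(p),(1,\xi(x))\rangle+\mathcal O(r)$, with $r_0$ small. Failing that, I would work at scale $r^{1/2}$ in $x$, where the curves are $r$-close to lines, and run the argument on each piece. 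A fallback is to check that the argument of \cite{ORegan-Wu-Yi} goes through for curve families satisfying a cinematic (Pramanik--Yang--Zahl type) condition, which the nondegeneracy of $\partial_x\partial_y\Theta$ together with~\eqref{e:phase-nonlinear} should supply.
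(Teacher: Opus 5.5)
Your plan is the paper's argument in the point--line dual picture. Your point $\Psi(p)$ corresponds to the curve $\Gamma_{F_{\tilde y}}$, a tube in direction $x$ at offset $z$ corresponds to a square $\mathbf x\times\mathbf z$, and $\sum_{x,T}\#(P\cap T)^2$ is $\sum_{\mathbf p}W(\mathbf p)^2$. However, there is a genuine gap in the range $\sqrt r\ll\tilde r:=|y_1-y_2|\ll 1$. Your trivial bound does not reach this range: it gives $r^{5-4\delta}\tilde r^{\delta}$, so it only suffices for $\tilde r\lesssim\sqrt r$.

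In this range the map $\tilde y\mapsto F_{\tilde y}$ (equivalently $\Psi$) has Jacobian $\sim\tilde r$. So many distinct $r$-squares $\tilde{\mathbf y}\subset Y^2\cap\Sigma_{\tilde r}$ produce functions that are $\mathcal O(r)$-close in $C^2$; Lemma~\ref{l:mul-bound} bounds their number only by $C\log(1/r)(\tilde r+\sqrt r)^{-\delta}$. The left-hand side of~\eqref{e:additive-bound} counts these with multiplicity, so your reduction to $\sum_{x,T}\#(P\cap T)^2$ with $P$ a set is false. If you keep $P$ as a multiset instead, two things break:
\begin{itemize}
\item Theorem~\ref{t:OWY} no longer applies, since it requires an $r$-separated family.
\item The RKT bound you state is not what your argument yields. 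For $\Omega=\Psi^{-1}(R)\cap\Sigma_{\tilde r}$ with $R$ an $r_1\times r_2$ rectangle, $|\Omega|_r$ is of order $r_1r_2/(\tilde r r^2)$. Lemmas~\ref{l:moderate-split-general} and~\ref{l:moderate-regular} then only give $\log(1/r)\,(r_1r_2/(\tilde r r^2))^{\delta}$, exactly as in Lemma~\ref{l:RKT-Y}.
\end{itemize}
This multiplicity is intrinsic to the function family at scale $r$, so a reparametrization (``bi-Lipschitz after rescaling'') does not remove it. Bounding the multiplicities by their maximum loses powers of $\tilde r^{-\delta}$ and falls short of the exponent $\frac72\delta$.

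The missing step is to pigeonhole the multiplicity $m$ as in~\eqref{e:multiplicity-def}--\eqref{e:F-r-m-def}. Then both $\#(\mathbb F_{\tilde r,m})\lesssim m^{-1}(\tilde r+\sqrt r)^{\delta}r^{-2\delta}$ and the RKT constant $m^{-1}\log(1/r)(\tilde r+\sqrt r)^{-\delta}$ of Lemma~\ref{l:RKT-bound} gain a factor $m^{-1}$. After also pigeonholing the richness $w$ of the squares, you get two bounds on $(mw)^2\#(\mathbb P_{\tilde r,m,w})$:
\begin{itemize}
\item $mw(\tilde r+\sqrt r)^{\delta}r^{-3\delta}$, from counting incidences trivially;
\item $(mw)^{-1}(\tilde r+\sqrt r)^{-\delta}r^{-4\delta-\epsilon}$, from Theorem~\ref{t:OWY}.
\end{itemize}
Their geometric mean is $r^{-\frac72\delta-\frac12\epsilon}$, with the $\tilde r$-factors cancelling exactly (Lemma~\ref{l:final-nail}).

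Two smaller corrections. First, the ``main obstacle'' you single out is not one. Theorem~\ref{t:OWY} is already a statement about $\mathfrak T$-transversal families of $C^2$ curves with an RKT condition on 1-jets. The paper applies it directly to $F_{\tilde y}(x)=\partial_x\Theta(x,y_1)+\partial_x\Theta(x,y_2)$, checking transversality in the coordinates $\hat y=(y_1+y_2,(y_1-y_2)^2)$ (Lemma~\ref{l:distance-verified}). Your first proposed fix cannot work in general anyway. An identity $F_x(p)=\langle\tilde\Psi(p),(1,\xi(x))\rangle+\mathcal O(r)$ on a fixed neighborhood for all small $r$ would force the functions $y\mapsto\partial_x\Theta(x,y)$ to span at most a two-dimensional space modulo constants. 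This property is gauge-invariant, and it fails for the phase~\eqref{e:our-Theta-def}, where $\partial_\theta\Theta=\cot\frac{\theta-\theta'}{2}$.

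Second, the RKT rectangles live in the 1-jet coordinates $(F_{\tilde y}(x),\partial_xF_{\tilde y}(x))$ at each $x$. So the preimage is bounded by level sets of $F_{\tilde y}(x)$ and by the \emph{circles} $b_x(y_1)^2+b_x(y_2)^2=\mathrm{const}$, plus level sets of $y_1-y_2$ from the dyadic decomposition. It is not bounded by lines $\{y_1+y_2=c\}$. Those circle arcs are graphs with bounded slope only after the quadrant reduction of Lemma~\ref{l:quadrant-circle}, which is what makes Lemma~\ref{l:moderate-split-general} applicable.
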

%%%%%%%%%%%%%%%%%%%%%%%%%%%%%%%%%%%%%%%%%%%%%%%%%%%%%%%%%%%%%%%%%%%%%%%%%%%%%%%%
\begin{remark}
We note that the trivial bound is
$$
|(X\times Y^4)\cap \mathcal E_r(\Theta)|\leq Cr|X|\cdot|Y|^3\leq C r^{5-4\delta}
$$
so Proposition~\ref{l:additive-bound} gives a gain of $r^{\frac12\delta-}$.
\end{remark}
%%%%%%%%%%%%%%%%%%%%%%%%%%%%%%%%%%%%%%%%%%%%%%%%%%%%%%%%%%%%%%%%%%%%%%%%%%%%%%%%

We are ready to finish the proof of the reduction:
%%%%%%%%%%%%%%%%%%%%%%%%%%%%%%%%%%%%%%%%%%%%%%%%%%%%%%%%%%%%%%%%%%%%%%%%%%%%%%%%
\begin{proof}[Proof of Proposition~\ref{l:FUP-reduced-2}]
Recall that $X,Y\subset I'_0$ are $\delta$-dimensional
AD regular on scales $C_{\AD}h^\rho$ to~1 with constant $C_{\AD}$. By Lemma~\ref{l:AD-cover} and~\eqref{e:volume-covering-M}, we get
\begin{align}
  \label{e:reduction-XY-1}
|X| &\leq 12C_{\AD}^3 h^{\rho(1-\delta)},
\\
  \label{e:reduction-XY-2}
\mathbf M_{h^{\frac12}}(Y) &\leq 12C_{\AD}^3 h^{\frac12\delta+\rho(1-\delta)}.
\end{align}
By Lemma~\ref{l:Linfty-L4} with $N:=1$, and recalling from~\eqref{e:delta-beta-rho} that $\rho=1-\epsilon$, we then have
\begin{equation}
  \label{e:reduction-int-1}
\|\indic_X\mathcal B(h)\indic_Y\|_{L^2\to L^2}
\leq
Ch^{-\frac18-\frac38(\delta+\epsilon)}
|(X(h^{\frac12})\times Y^4)\cap \mathcal E_{h^{\frac12-\epsilon}}(\Theta)|^{\frac18}+Ch.
\end{equation}
Let $r$ be a dyadic number such that
$$
\tfrac14 r\leq h^{\frac12-\epsilon}\leq \tfrac12 r.
$$
Denote
$$
\widehat X:=X(2h^{\frac12}),\quad
\widehat Y:=Y(h^{\frac12}).
$$
The neighborhood $((X(h^{\frac12})\times Y^4)\cap \mathcal E_{h^{\frac12-\epsilon}}(\Theta))(h^{\frac12})$
is contained in $(\widehat X\times \widehat Y^4)\cap \mathcal E_r(\Theta)$. Therefore
\begin{equation}
  \label{e:reduction-int-2}
\begin{aligned}
|(X(h^{\frac12})\times Y^4)\cap \mathcal E_{h^{\frac12-\epsilon}}(\Theta)|
&\leq h^{\frac12}\mathbf M_{h^{\frac12}}(Y)^4|(X(h^{\frac12})\times Y^4)\cap \mathcal E_{h^{\frac12-\epsilon}}(\Theta)|_{h^{\frac12}}
\\
&\leq 32h^{-2}\mathbf M_{h^{\frac12}}(Y)^4|(\widehat X\times \widehat Y^4)\cap \mathcal E_r(\Theta)|
\\
&\leq Ch^{2(1-\delta)-4\epsilon}|(\widehat X\times \widehat Y^4)\cap \mathcal E_r(\Theta)|.
\end{aligned}
\end{equation}
Here in the first inequality we use~\eqref{e:volume-covering-M}. 
In the second inequality we use~\eqref{e:volume-nbhd}.
In the last inequality we use~\eqref{e:reduction-XY-2}.

By Lemma~\ref{l:AD-nbhd}, the sets $\widehat X,\widehat Y\subset I'_0$ are $\delta$-dimensional
AD regular on scales $r$ to~1 with constant $4C_{\AD}$.
Applying Proposition~\ref{l:additive-bound} to these sets,
we see that
\begin{equation}
  \label{e:reduction-int-3}
|(\widehat X\times \widehat Y^4)\cap \mathcal E_r(\Theta)|
\leq C h^{\frac52-\frac74\delta-5.5\epsilon}.
\end{equation}
Putting together~\eqref{e:reduction-int-1}, \eqref{e:reduction-int-2}, and~\eqref{e:reduction-int-3},
we get
$$
\|\indic_X\mathcal B(h)\indic_Y\|_{L^2\to L^2}\leq 
Ch^{\frac{7}{16}-\frac{27}{32}\delta-\frac{25}{16}\epsilon}
+Ch.
$$
Recalling~\eqref{e:delta-beta-rho}, we see that this gives~\eqref{e:FUP-reduced-2}.
\end{proof}
%%%%%%%%%%%%%%%%%%%%%%%%%%%%%%%%%%%%%%%%%%%%%%%%%%%%%%%%%%%%%%%%%%%%%%%%%%%%%%%%

%%%%%%%%%%%%%%%%%%%%%%%%%%%%%%%%%%%%%%%%%%%%%%%%%%%%%%%%%%%%%%%%%%%%%%%%%%%%%%%%
%%%%%%%%%%%%%%%%%%%%%%%%%%%%%%%%%%%%%%%%%%%%%%%%%%%%%%%%%%%%%%%%%%%%%%%%%%%%%%%%
\section{Proof of the additive energy bound}
\label{s:additive-bound}

In this section, we prove Proposition~\ref{l:additive-bound}.

%%%%%%%%%%%%%%%%%%%%%%%%%%%%%%%%%%%%%%%%%%%%%%%%%%%%%%%%%%%%%%%%%%%%%%%%%%%%%%%%
\subsection{A theorem from projection theory}

We will use a theorem in projection theory due to O'Regan--Wu--Yi~\cite{ORegan-Wu-Yi}. In this subsection we state this theorem and give a slight modification of it.

%%%%%%%%%%%%%%%%%%%%%%%%%%%%%%%%%%%%%%%%%%%%%%%%%%%%%%%%%%%%%%%%%%%%%%%%%%%%%%%%
\subsubsection{Notation}

We first need to review some notation and definitions from~\cite{ORegan-Wu-Yi}:%
\footnote{The paper~\cite{ORegan-Wu-Yi} uses the letter $\delta$ to denote the small scale at
which their results are applied. In our applications to hyperbolic surfaces, $\delta$ 
denotes the dimension of the limit set,
so we use the letter $r$ to denote the small scale instead of~$\delta$.
Also, we use $\#(\bullet)$ for the cardinality of a set and $|\bullet|$ for its Lebesgue
measure, while~\cite{ORegan-Wu-Yi} uses $|\bullet|$ for cardinality.}
\begin{itemize}
\item Let $r\in (0,1]$ and $\mathbb P\subset \mathcal D_r(\mathbb R^d)$ be a set of grid $r$-cubes.
Let also $s\geq 0$ (the dimension) and $C>0$ (the regularity constant). We say
that $\mathbb P$ is an \emph{$(r,s,C)$-KT cube family}, if 
$$
\#\{\mathbf p\in\mathbb P\colon \mathbf p\cap Q\neq\emptyset\}\leq
C\Big({r_1\over r}\Big)^s\quad\text{for all $r_1$-cubes $Q\subset\mathbb R^d$,
$r_1\in [r,1]$.}
$$
(Here KT stands for Katz--Tao.)
\item If $I\subset \mathbb R$ is a (compact) interval, we use the following norm on the space $C^2(I)$:
$$
\|F\|_{C^2(I)}:=\max_{x\in I}\sum_{k=0}^2 |F^{(k)}(x)|.
$$
(In this section, the functions in $C^2(I)$ are always real-valued.)
\item For $F\in C^2(I)$, denote its graph by
$$
\Gamma_F:=\{(x,F(x))\colon x\in I\}\ \subset\ \mathbb R^2.
$$
\item For $F\in C^2(I)$ and $x\in I$, denote the 1-jet of~$F$ at~$x$ by
$$
A_x(F)=(F(x),F'(x))\ \in\ \mathbb R^2.
$$
For a set $\mathbb F\subset C^2(I)$, this definition gives a set $A_x(\mathbb F)\subset \mathbb R^2$.
\item If $\mathfrak T\geq 1$, we say a set $\mathbb F\subset C^2(I)$ is a \emph{$\mathfrak T$-transversal family on~$I$}, if
\begin{equation}
\inf_{x\in I}|A_x(F)-A_x(G)|\geq\mathfrak T^{-1}\|F-G\|_{C^2(I)}\quad\text{for all }
F,G\in \mathbb F.
\end{equation}
\item If $r\in (0,1]$, $\tau\in [0,2]$, and $C\geq 1$, we say that $\mathbb F\subset C^2(I)$ is an \emph{$(r,\tau,C)$-RKT set}, if for all $x\in I$ we have
$$
\begin{gathered}
\big|A_x(\mathbb F)\cap (I_1\times I_2)\big|_r\leq C\Big({\sqrt{r_1r_2}\over r}\Big)^\tau
\\
\quad\text{for all }r_1,r_2\in [r,1]
\quad\text{and all intervals }I_1,I_2\subset\mathbb R
\quad\text{with }|I_j|=r_j.
\end{gathered}
$$
(Here RKT stands for rectangular Katz--Tao.)
\end{itemize}

%%%%%%%%%%%%%%%%%%%%%%%%%%%%%%%%%%%%%%%%%%%%%%%%%%%%%%%%%%%%%%%%%%%%%%%%%%%%%%%%
\subsubsection{Statement of the projection theorem}

We are now ready to state the result that we use, which is
\cite[Theorem~4.4]{ORegan-Wu-Yi}:
%%%%%%%%%%%%%%%%%%%%%%%%%%%%%%%%%%%%%%%%%%%%%%%%%%%%%%%%%%%%%%%%%%%%%%%%%%%%%%%%
\begin{theorem}
  \label{t:OWY}
Let $s\in (0,1]$, $C_1,C_2\geq 1$, and $\mathfrak T\geq 1$. For any $\epsilon>0$,
there exists $r'_0=r'_0(\mathfrak T,s,\epsilon)>0$ such that the following holds for
all dyadic $r\in (0,r'_0]$. Let $\mathbb F\subset B_{C^2([-2,2])}(0,1)$ be an $r$-separated
$\mathfrak T$-transversal family on $[-2,2]$. Assume that
\begin{itemize}
\item $\mathbb F$ is an $(r,2s,C_1)$-RKT set;
\item for each $F\in\mathbb F$, assume that there exists an $(r,\sigma,C_2)$-KT cube family
$$
\mathbb P(F)\subset \{\mathbf p\in \mathcal D_r([-1,1]^2)\colon\mathbf  p\cap \Gamma_F\neq\emptyset\}.
$$ 
Here $\sigma=\min(2s,2-2s)$.
\end{itemize}
Write $\mathbb P:=\bigcup_{F\in\mathbb F}\mathbb P(F)$, then
\begin{equation}
  \label{e:OWY}
\sum_{F\in\mathbb F} \#(\mathbb P(F))\leq C_1^{\frac23}C_2^{\frac13}r^{-\frac23 s-\epsilon}
\#(\mathbb P)^{\frac23 }\#(\mathbb F)^{\frac13}.
\end{equation}
\end{theorem}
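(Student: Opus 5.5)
This statement is \cite[Theorem~4.4]{ORegan-Wu-Yi}, so I would not reprove it. The plan is to check carefully that the version above is a faithful translation of the cited result into the notation of this paper. Concretely, I would go through the definitions in the order listed in the notation subsection and match each with its counterpart in~\cite{ORegan-Wu-Yi}:
\begin{enumerate}
\item the scale $r$ here is their $\delta$;
\item $\#(\bullet)$ here is their $|\bullet|$;
\item $(r,s,C)$-KT cube families, $\mathfrak T$-transversal families, the $C^2(I)$ norm, the $1$-jets $A_x(F)$, and $(r,\tau,C)$-RKT sets agree with theirs;
\item the normalizations match: $\mathbb F\subset B_{C^2([-2,2])}(0,1)$, cubes in $\mathcal D_r([-1,1]^2)$, $\sigma=\min(2s,2-2s)$, and the restriction to dyadic $r\le r'_0(\mathfrak T,s,\epsilon)$.
\end{enumerate}

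The one point needing genuine attention is the explicit dependence $C_1^{\frac23}C_2^{\frac13}$ on the Katz--Tao and RKT constants. If~\cite{ORegan-Wu-Yi} states the bound with this dependence, we are done. If instead it states the bound only for bounded constants, I would recover the dependence by a splitting argument.
\begin{itemize}
\item For $C_2$: randomly partition each $\mathbb P(F)$ into about $C_2$ subfamilies. Each subfamily is then an $(r,\sigma,O(\log(1/r)))$-KT family with high probability, by a Chernoff bound over the polynomially many dyadic test cubes.
\item For $C_1$: similarly partition $\mathbb F$ into about $C_1$ subfamilies, each $(r,2s,O(\log(1/r)))$-RKT.
\item Apply the bounded-constant theorem to each pair of pieces. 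Then sum using H\"older's inequality: with $\sum_j\#(\mathbb P_j)\lesssim C_2\#(\mathbb P)$ and $\sum_i\#(\mathbb F_i)=\#(\mathbb F)$, the weights $\frac23,\frac13$ produce exactly the factor $C_1^{\frac23}C_2^{\frac13}$.
\item The logarithmic losses are absorbed into $r^{-\epsilon}$.
\end{itemize}

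For the reader's orientation, I would also recall the structure of the proof in~\cite{ORegan-Wu-Yi}.
\begin{itemize}
\item Transversality lets one view curves $\Gamma_F$ through a common cube as dual to points $A_x(F)$ in jet space.
\item A two-ends reduction removes concentration of $\mathbb P(F)$ along short arcs.
\item The curved Furstenberg-type incidence estimate of Demeter--Wang and Wang--Wu, run via induction on scales and a high--low argument, then gives the $\frac23,\frac13$ incidence exponent.
\end{itemize}

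I expect the only real obstacle to be the bookkeeping of the constants $C_1,C_2$: confirming that the cited statement carries them in this form, or otherwise carrying out the random-splitting reduction.
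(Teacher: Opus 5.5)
Your primary plan matches the paper. Theorem~\ref{t:OWY} is quoted from \cite[Theorem~4.4]{ORegan-Wu-Yi} with the factor $C_1^{\frac23}C_2^{\frac13}$ already explicit, and the paper gives no further argument; its own adaptation starts with the rescaling in Corollary~\ref{c:OWY}. So the citation together with your notational dictionary is all that is required.

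Your fallback, however, does not deliver what you claim, so fix it before you ever rely on it. As written, the $C_2$-splitting gives a loss of $C_2$, not $C_2^{\frac13}$. You randomly partition each $\mathbb P(F)$ separately, so a cube lying in $\mathbb P(F)$ for many $F$ receives many colors, and each union $\mathbb P_j$ can be almost all of $\mathbb P$. With $K\approx C_2$ pieces and only $\sum_j\#(\mathbb P_j)\lesssim C_2\#(\mathbb P)$, H\"older gives $\sum_j\#(\mathbb P_j)^{\frac23}\le K^{\frac13}\big(C_2\#(\mathbb P)\big)^{\frac23}\approx C_2\,\#(\mathbb P)^{\frac23}$.

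The fix is to color $\mathbb P$ globally. Give each $\mathbf p\in\mathbb P$ a single random color and set $\mathbb P_j(F):=\mathbb P(F)\cap\mathbb P_j$. The $\mathbb P_j$ are then disjoint, so $\sum_j\#(\mathbb P_j)=\#(\mathbb P)$. The Chernoff argument for the KT property is unchanged, since the indicators are still independent across cubes. Combined with your partition $\mathbb F=\bigsqcup_i\mathbb F_i$, this gives
$\sum_{i,j}\#(\mathbb P_j)^{\frac23}\#(\mathbb F_i)^{\frac13}\le K_2^{\frac13}K_1^{\frac23}\#(\mathbb P)^{\frac23}\#(\mathbb F)^{\frac13}$, as needed. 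Your $C_1$-splitting is fine as stated.

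One more point on the fallback. You cannot automatically feed constants of size $O(\log(1/r))$ into a theorem known only for fixed $C_1,C_2$ with $r'_0$ depending on them. If that is the form available, use about $C_2r^{-\epsilon'}$ colors and $C_1r^{-\epsilon'}$ pieces. The pieces then have constants bounded in terms of $\epsilon'$ and $\mathfrak T$ with overwhelming probability, at a cost of $r^{-\epsilon'}$, which you absorb into $r^{-\epsilon}$.
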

%%%%%%%%%%%%%%%%%%%%%%%%%%%%%%%%%%%%%%%%%%%%%%%%%%%%%%%%%%%%%%%%%%%%%%%%%%%%%%%%

%%%%%%%%%%%%%%%%%%%%%%%%%%%%%%%%%%%%%%%%%%%%%%%%%%%%%%%%%%%%%%%%%%%%%%%%%%%%%%%%
\subsubsection{Point-line incidences and a restatement of the projection theorem}

Fix dyadic $r_0\in(0,1]$ and define $I_0=[-r_0,r_0]$ as in~\eqref{e:I-0-def}.
For $n\geq 0$, define the set of all incidence pairs
\begin{equation}
  \label{e:incidence-set-def}
\mathscr I_n:=\{(F,\mathbf p)\in C^2(I_0)\times \mathcal D_r(\mathbb R^2)\colon
\Gamma_F\cap \mathbf p(nr)\neq\emptyset\}.
\end{equation}
Here $\mathbf p(nr)\subset\mathbb R^2$ is the $nr$-neighborhood of~$\mathbf p$, defined in~\eqref{e:X-r-def}.

We give here a slight modification of Theorem~\ref{t:OWY} which is better suited to our use:
%%%%%%%%%%%%%%%%%%%%%%%%%%%%%%%%%%%%%%%%%%%%%%%%%%%%%%%%%%%%%%%%%%%%%%%%%%%%%%%%
\begin{corollary}
  \label{c:OWY}
Let $\delta\in (0,1]$, $C_{\RKT},C_{\KT},\mathfrak T\geq 1$, $\epsilon>0$.
Then for all dyadic $r$ which are small enough depending only on $\mathfrak T,\delta,\epsilon,r_0$
the following holds. Let $\mathbb F\subset B_{C^2(I_0)}(0,1)$ be an $r$-separated
$\mathfrak T$-transversal family on $I_0$. Let $\mathbb P\subset \mathcal D_r(I'_0\times \mathbb R)$
be a family of dyadic $r$-cubes (recalling the notation~\eqref{e:D-r-X-def}
and~\eqref{e:I-0-def}).
Assume that:
\begin{itemize}
\item $\mathbb F$ satisfies the following RKT bound for all $x\in I_0$, all $r_1,r_2\in [r,1]$,
and all intervals $I_1,I_2\subset\mathbb R$ with $|I_j|=r_j$:
\begin{equation}
  \label{e:RKT-needed}
\#\{ F\in\mathbb F\colon A_x(F)\in I_1\times I_2\}\leq C_{\RKT}\Big({r_1r_2\over r^2}\Big)^\delta;
\end{equation}
\item for each $F\in\mathbb F$, the set $\{\mathbf p\in\mathbb P\colon (F,\mathbf p)\in \mathscr I_{8}\}$
is an $(r,\sigma,C_{\KT})$-KT cube family, where $\sigma:=2\min (\delta,1-\delta).$
\end{itemize}
Then
\begin{equation}
  \label{e:OWY-c}
\#((\mathbb F\times\mathbb P)\cap \mathscr I_8)
\leq C_{\RKT}^{\frac23}C_{\KT}^{\frac13}r^{-\frac23\delta-\epsilon}
\#(\mathbb P)^{\frac23}\#(\mathbb F)^{\frac13}.
\end{equation}
\end{corollary}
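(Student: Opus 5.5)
Our approach is to derive Corollary~\ref{c:OWY} from Theorem~\ref{t:OWY} by rescaling $I_0$ to $[-2,2]$ and passing from the thickened incidences $\mathscr I_8$ to genuine graph-cube incidences. The proof has three steps.

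\textbf{Step 1: rescaling.} Set $\lambda:=2/r_0$, and for $F\in\mathbb F$ define $\widetilde F(t):=c\,\lambda F(t/\lambda)$ on $[-2,2]$, with a small constant $c=c(r_0)$ chosen so that $\|\widetilde F\|_{C^2([-2,2])}\leq 1$. Note that $\widetilde F''=c\lambda^{-1}F''$ and $\widetilde F'=cF'$, so all derivatives stay controlled. The new scale is $\tilde r:=c\lambda r$, rounded to a dyadic number up to a bounded factor, and $I'_0\times\mathbb R$ is mapped inside a bounded region. Under this map:
\begin{itemize}
\item $r$-separation is preserved up to constants.
\item $\mathfrak T$-transversality becomes $C(r_0)\mathfrak T$-transversality, since $A_x$ and the $C^2$ norm change by bounded factors.
\item The RKT hypothesis~\eqref{e:RKT-needed} implies the $(\tilde r,2\delta,C\,C_{\RKT})$-RKT property with $s=\delta$. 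Indeed, we have $(r_1r_2/r^2)^\delta=(\sqrt{r_1r_2}/r)^{2\delta}$, and covering numbers are bounded by cardinalities of $r$-separated sets up to constants.
\end{itemize}
Since $\delta$ may exceed $1/2$, we must also check that $\sigma=\min(2s,2-2s)$ in Theorem~\ref{t:OWY} matches $\sigma=2\min(\delta,1-\delta)$ here; it does. Since $s=\delta\leq1$ is required, this is fine.

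\textbf{Step 2: thickened incidences to genuine incidences.} For each $F$, the cubes $\mathbf p$ with $(F,\mathbf p)\in\mathscr I_8$ do not necessarily meet $\Gamma_F$. We handle this with a pigeonholing and translation trick. Each such $\mathbf p$ lies within distance $8r\sqrt2$ of $\Gamma_F$, so some grid cube $\mathbf p+r\,\vec k$ meets $\Gamma_F$, with $\vec k\in\mathbb Z^2$ and $|\vec k|\leq 20$. Split $(\mathbb F\times\mathbb P)\cap\mathscr I_8$ into $O(1)$ classes according to $\vec k$. For fixed $\vec k$, set
\[
\mathbb P(F):=\{\mathbf p+r\vec k\colon \mathbf p\in\mathbb P,\ (F,\mathbf p)\in\mathscr I_8,\ (\mathbf p+r\vec k)\cap\Gamma_F\neq\emptyset\}.
\]
A translate of a KT family is still KT with the same constant, and its union over $F$ has cardinality at most $\#(\mathbb P)$.

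There is one remaining subtlety: $\mathbb P(F)$ must sit in $\mathcal D([-1,1]^2)$ after rescaling. The $x$-coordinate is fine because $\mathbb P\subset I'_0\times\mathbb R$ and $I'_0=[-0.4r_0,0.4r_0]$ maps inside $[-0.8,0.8]$, leaving room for the shift. The vertical coordinate is fine because the graphs are bounded by $\|F\|_{C^0}\leq 1$ times $c\lambda$, which is at most $1/2$ by the choice of $c$.

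\textbf{Step 3: apply Theorem~\ref{t:OWY} and sum.} We apply Theorem~\ref{t:OWY} for each of the $O(1)$ shifts. Summing the resulting bounds, and rescaling $\tilde r^{-\frac23\delta-\epsilon}$ back to $r^{-\frac23\delta-\epsilon}$ at the cost of constants, absorbs all constants into an extra $r^{-\epsilon/2}$ once $r$ is small (replace $\epsilon$ by $\epsilon/2$ in the theorem). This gives~\eqref{e:OWY-c} with the constants exactly as stated.

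We expect the main obstacle to be bookkeeping rather than ideas. Specifically, one must verify that the KT property with constant $C_{\KT}$ and the RKT bound with $C_{\RKT}$ survive the rescaling with only $r$-independent losses, which are then eaten by $r^{-\epsilon}$. One must also ensure that the separated-set cardinality bound~\eqref{e:RKT-needed} dominates the covering-number formulation used in the RKT definition.
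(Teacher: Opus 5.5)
Your plan (rescale $I_0$ to $[-2,2]$, convert the thickened incidences $\mathscr I_8$ into graph--cube incidences, apply Theorem~\ref{t:OWY}, absorb $r_0$-dependent constants into $r^{-\epsilon}$) is the paper's plan. However, there is a genuine gap: Steps~1 and~2 do not fit together, and the missing piece is essentially the whole content of the corollary.

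\emph{Grid cubes.} Your rescaling $(x,y)\mapsto(\lambda x,c\lambda y)$ is necessarily anisotropic. The horizontal factor is $\lambda=2/r_0$, while your vertical factor $c\lambda$ is at most $\tfrac12$; it cannot exceed $1$ if $\widetilde F$ is to stay in $B_{C^2([-2,2])}(0,1)$ when $\|F\|_{C^0}$ is close to $1$. So the translated cubes $\mathbf p+r\vec k$ from Step~2 become $\lambda r\times c\lambda r$ rectangles, not elements of $\mathcal D_{\tilde r}([-1,1]^2)$. Theorem~\ref{t:OWY} needs grid $\tilde r$-cubes meeting $\Gamma_{\widetilde F}$, and you only check where these rectangles lie.

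\emph{Separation.} Independently, the scale $\tilde r=c\lambda r$ is incompatible with the separation hypothesis. Since $\widetilde F'=cF'(\cdot/\lambda)$ and $\widetilde F''=c\lambda^{-1}F''(\cdot/\lambda)$, you only get $c\lambda^{-1}r$-separation in general. For instance, $F=0$, $G(x)=rx$ is a $2$-transversal, $r$-separated pair, yet $\|\widetilde F-\widetilde G\|_{C^2([-2,2])}=3cr<2cr/r_0=c\lambda r$ once $r_0<2/3$. The theorem requires separation at exactly the scale of the cubes, so ``preserved up to constants'' is not enough as stated.

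\emph{How the paper handles both points.} It rescales only the domain, $T_0F(x)=F(\tfrac12r_0x)$, and works at the dyadic scale $\hat r=\tfrac14r_0^2r$. At this scale separation is exact because $\|T_0F\|_{C^2([-2,2])}\geq\tfrac14r_0^2\|F\|_{C^2(I_0)}$. For each $\mathbf p$ with $(F,\mathbf p)\in\mathscr I_8$, it picks $x$ with $(x,F(x))\in\mathbf p(8r)$. It then lets $\Pi_F(\mathbf p)$ be the grid $\hat r$-cube containing $(2x/r_0,F(x))$, which is a point of $\Gamma_{T_0F}$; put $\widehat{\mathbb P}(T_0F):=\Pi_F(\mathbb P(F))$ and let $\widehat{\mathbb P}$ be their union.
\begin{itemize}
\item The KT property of $\widehat{\mathbb P}(T_0F)$ holds because, for an $r'$-square $Q$ with $r'\geq\hat r$, the $8r$-neighbourhood of $S^{-1}(Q(\hat r))$, where $S(a,b)=(2a/r_0,b)$, lies in a square of side $\leq C_0r'$.
\item The bounds $\#(\widehat{\mathbb P})\leq C_0\#(\mathbb P)$ and $\#(\mathbb P(F))\leq C_0\#(\widehat{\mathbb P}(T_0F))$ hold because each $\Pi_F(\mathbf p)$ meets $S(\mathbf p(8r))$. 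Each such set meets only boundedly many grid $\hat r$-cubes, and each grid $\hat r$-cube meets only boundedly many of them.
\end{itemize}

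This selection already absorbs the thickening, so your pigeonholing over $\vec k$ is unnecessary. If you prefer to keep it, you still have to choose, for each translated cube, a dyadic grid $\tilde r$-cube (with $\tilde r\lesssim c\lambda^{-1}r$) containing a point of $\Gamma_{\widetilde F}$ lying in its image. You then have to redo the KT and cardinality counts for these choices. Note that the union over $F$ then has size $\leq C_0\#(\mathbb P)$ rather than $\leq\#(\mathbb P)$, since different $F$ may select different cubes inside the same image rectangle.
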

%%%%%%%%%%%%%%%%%%%%%%%%%%%%%%%%%%%%%%%%%%%%%%%%%%%%%%%%%%%%%%%%%%%%%%%%%%%%%%%%
\begin{proof}
In this proof, $C_0$ denotes a constant depending only on~$r_0$.

1. We rescale the functions in $\mathbb F$ to be defined on the interval $[-2,2]$.
Define the linear operator
$$
T_0:C^2(I_0)\to C^2([-2,2]),\quad
T_0 F(x)=F(\tfrac12 r_0x).
$$
Note that
\begin{equation}
  \label{e:OWY-c-int-1}
\tfrac14r_0^2\|F\|_{C^2(I_0)}\leq \|T_0F\|_{C^2([-2,2])}\leq \|F\|_{C^2(I_0)}.
\end{equation}
Thus the family
$$
\widehat {\mathbb F}:= T_0(\mathbb F)\ \subset\ B_{C^2([-2,2])}(0,1)
$$
is $\hat r$-separated where
$$
\hat r:=\tfrac14 r_0^2r.
$$
We have for all $x\in [-2,2]$
\begin{equation}
\label{e:OWY-c-int-2}
A_x(T_0 F)=Z(A_{\frac12 r_0x}(F))\quad\text{where }
Z(a,b)=(a,\tfrac12r_0b).
\end{equation}
Thus for each $F,G\in\mathbb F$ we have
$$
\begin{aligned}
\inf_{x\in [-2,2]}|A_x(T_0F)-A_x(T_0G)|
&\geq
\tfrac12r_0 \inf_{x\in I_0}|A_x(F)-A_x(G)|
\\
&\geq \tfrac12r_0 \mathfrak T^{-1}\|F-G\|_{C^2(I_0)}
\\
&\geq \tfrac12r_0 \mathfrak T^{-1}\|T_0F-T_0G\|_{C^2([-2,2])}
\end{aligned}
$$
where in the second inequality we use the transversality of $\mathbb F$. It follows
that $\widehat{\mathbb F}$ is $\widehat{\mathfrak T}$-transversal over $[-2,2]$,
with $\widehat{\mathfrak T}:=2r_0^{-1}\mathfrak T$.

\noindent 2. Assume that $I_1,I_2\subset\mathbb R$ are intervals with
$|I_j|=r_j$. We estimate for each $x\in [-2,2]$
$$
\begin{aligned}
\big|A_x(\widehat{\mathbb F})\cap (I_1\times I_2)\big|_{\hat r}
&\leq  \#\{F\in\mathbb F\colon A_x(T_0F)\in I_1\times I_2\}
\\
&=\#\{F\in\mathbb F\colon A_{\frac12r_0x}(F)\in Z^{-1}(I_1\times I_2)\}.
\end{aligned}
$$
We have $Z^{-1}(I_1\times I_2)=I_1\times \hat I_2$ where $\hat I_2$ is an interval
of length $2r_0^{-1}r_2$. Thus by~\eqref{e:RKT-needed} we see that
\begin{equation}
\label{e:OWY-c-int-3}
\big|A_x(\widehat{\mathbb F})\cap (I_1\times I_2)\big|_{\hat r}\leq 2r_0^{-1}C_{\RKT}\Big({r_1r_2\over r^2}\Big)^\delta\quad\text{if }r_1\in [r,1],\
r_2\in [\tfrac12 r_0r,\tfrac12 r_0].
\end{equation}
This estimate is easy to extend to the case when $r_1,r_2\in [\hat r,1]$ with
a loss of $C_0$ in the constant by covering any $r_1\times r_2$-rectangle by rectangles
of sizes allowed in~\eqref{e:OWY-c-int-3}.
It follows that $\widehat{\mathbb F}$ is an $(\hat r,2\delta,C_1)$-RKT set
with $C_1:=C_0C_{\RKT}$.

\noindent 3. Let $F\in\mathbb F$ and denote
$$
\mathbb P(F):=\{\mathbf p\in\mathbb P\colon (F,\mathbf p)\in\mathscr I_8\}.
$$
Assume that $\mathbf p\in \mathbb P(F)$.
Take $x\in I_0$ such that $(x,F(x))\in \mathbf p(8r)$. Then
$$
(2x/r_0,F(x))\in\Gamma_{T_0F}\cap S(\mathbf p(8r))\quad\text{where }
S(a,b):=(2a/r_0,b).
$$
Since $\mathbf p\cap (I_0'\times\mathbb R)\neq\emptyset$, by~\eqref{e:I-0-def} we see that
(for $r$ small enough) $x\in [-r_0/2,r_0/2]$. We also have $F(x)\in [-1,1]$.
Take
$$
\hat{\mathbf p}\in \mathcal D_{\hat r}([-1,1]^2)\quad\text{such that }(2x/r_0,F(x))\in\hat{\mathbf p}.
$$
This defines a map
$$
\Pi_F:\mathbb P(F)\to \mathcal D_{\hat r}([-1,1]^2),\quad
\Pi_F(\mathbf p)=\hat{\mathbf p},\quad
\Pi_F(\mathbf p)\cap\Gamma_{T_0F} \cap S(\mathbf p(8r))\neq\emptyset.
$$
Denote $\widehat F:=T_0F\in\widehat{\mathbb F}$ and define the cube family
$$
\widehat{\mathbb P}(\widehat F):=\Pi_F(\mathbb P(F)).
$$
Assume that $Q\subset \mathbb R^2$ is an $r'$-square with $r'\in [\hat r,1]$.
If $\mathbf p\in \mathbb P(F)$ and $\Pi_F(\mathbf p)\cap Q\neq\emptyset$,
then $\mathbf p\cap S^{-1}(Q(\hat r))(8r)\neq\emptyset$. The set
$S^{-1}(Q(\hat r))(8r)$ is contained in an $r'+16r+2\hat r$-square.
Since $\mathbb P(F)$ is an $(r,\sigma,C_{\KT})$-KT cube family, we see
that
$$
\#\{\mathbf p\in\mathbb P(F)\colon \Pi_F(\mathbf p)\cap Q\neq\emptyset\}
\leq C_0C_{\KT}\Big({r'\over \hat r}\Big)^\sigma.
$$
Thus $\widehat{\mathbb P}(\widehat F)$ is an $(\hat r,\sigma,C_2)$-KT cube family, where $C_2=C_0 C_{\KT}$.

\noindent 4. Put
$$
\widehat{\mathbb P}:=\bigcup_{\widehat F\in \widehat{\mathbb F}} \widehat{\mathbb P}(\widehat F).
$$
Assume that $\hat{\mathbf p}\in\widehat{\mathbb P}$. Then there exist $F\in \mathbb F$ and $\mathbf p\in \mathbb P(F)\subset\mathbb P$ such that $\hat{\mathbf p}=\Pi_F(\mathbf p)$.
We have
\begin{equation}
  \label{e:intersector}
\hat{\mathbf p}\cap S(\mathbf p(8r))\neq\emptyset.
\end{equation}
For each given $\mathbf p$,
there are at most $C_0$ many cubes $\hat{\mathbf p}$ satisfying~\eqref{e:intersector}. It follows that
$$
\#(\widehat{\mathbb P})\leq C_0\#(\mathbb P).
$$
Also, for each given $\hat{\mathbf p}$, there are at most $C_0$ many cubes $\mathbf p$
satisfying~\eqref{e:intersector}. Thus for each $F\in\mathbb F$
$$
\#(\mathbb P(F))\leq C_0\#(\widehat{\mathbb P}(T_0 F)).
$$
Now we apply Theorem~\ref{t:OWY} with $s:=\delta$ and $\widehat{\mathfrak T},\hat r,\widehat{\mathbb F},
\widehat{\mathbb P}(\widehat F)$ replacing $\mathfrak T,r,\mathbb F,\mathbb P(F)$ and see that
$$
\begin{aligned}
\#((\mathbb F\times\mathbb P)\cap\mathscr I_8)
&=\sum_{F\in\mathbb F}\#(\mathbb P(F))
\leq 
C_0\sum_{\widehat F\in\widehat{\mathbb F}} \#(\widehat{\mathbb P}(\widehat F))
\\
&\leq C_0 C_{\RKT}^{\frac23}C_{\KT}^{\frac13}r^{-\frac23 \delta-\epsilon}
\#(\mathbb P)^{\frac23 }\#(\mathbb F)^{\frac13}.
\end{aligned}
$$
This gives~\eqref{e:OWY-c} (if we change $\epsilon$ and take $r$ small enough).
\end{proof}
%%%%%%%%%%%%%%%%%%%%%%%%%%%%%%%%%%%%%%%%%%%%%%%%%%%%%%%%%%%%%%%%%%%%%%%%%%%%%%%%

%%%%%%%%%%%%%%%%%%%%%%%%%%%%%%%%%%%%%%%%%%%%%%%%%%%%%%%%%%%%%%%%%%%%%%%%%%%%%%%%
\subsection{Ingredients of the proof}
\label{s:AE-ingredients}

We henceforth make the assumptions of Proposition~\ref{l:additive-bound}:
\begin{itemize}
\item $\mathcal U\subset\mathbb R^2$ is an open neighborhood of the origin;
\item $\Theta\in C^\infty(\mathcal U;\mathbb R)$ has the Taylor expansion~\eqref{e:phase-cooked};
\item $r_0>0$ is dyadic and small enough depending on~$\Theta$;
\item we use the notation~\eqref{e:I-0-def}
for $I_0,I'_0$;
\item $\delta\in (0,\tfrac23]$ and $\epsilon>0$;
\item $r\leq 1$ is a dyadic number;
\item $X,Y\subset I'_0$ are $\delta$-dimensional AD regular on scales~$r$ to~1 with constant $C_{\AD}$.
\end{itemize}
Denote by $C_\Theta$ a constant depending only on $\Theta$ and by
$C$ a constant depending only on $\Theta,r_0,\delta,\epsilon,C_{\AD}$.

%%%%%%%%%%%%%%%%%%%%%%%%%%%%%%%%%%%%%%%%%%%%%%%%%%%%%%%%%%%%%%%%%%%%%%%%%%%%%%%%
\subsubsection{Our family of functions}

Denote
$$
\tilde y=(y_1,y_2),\
\tilde y'=(y'_1,y'_2)\ \in\ \mathbb R^2.
$$
For $\tilde y\in I_0^2$, define
\begin{equation}
  \label{e:our-F-def}
F_{\tilde y}\in C^2(I_0),\quad
F_{\tilde y}(x):=\partial_x\Theta(x,y_1)+\partial_x\Theta(x,y_2).
\end{equation}
Then the quantity that we need to estimate in~\eqref{e:additive-bound}
can be written as follows:
\begin{equation}
  \label{e:AE-rewritten}
|(X\times Y^4)\cap \mathcal E_r(\Theta)|
=\big|\big\{
(x,\tilde y,\tilde y')\in X\times Y^4\colon
|F_{\tilde y}(x)-F_{\tilde y'}(x)|\leq r
\big\}\big|.
\end{equation}
For $x\in I_0$, define
\begin{equation}
  \label{e:A-x-cal-def}
\mathcal A_x(\tilde y):=A_x(F_{\tilde y})=(F_{\tilde y}(x),\partial_x F_{\tilde y}(x))
\ \in\ \mathbb R^2.
\end{equation}
See Figure~\ref{f:geometry-grid}.

%%%%%%%%%%%%%%%%%%%%%%%%%%%%%%%%%%%%%%%%%%%%%%%%%%%%%%%%%%%%%%%%%%%%%%%%%%%%%%%%
\begin{figure}
\includegraphics{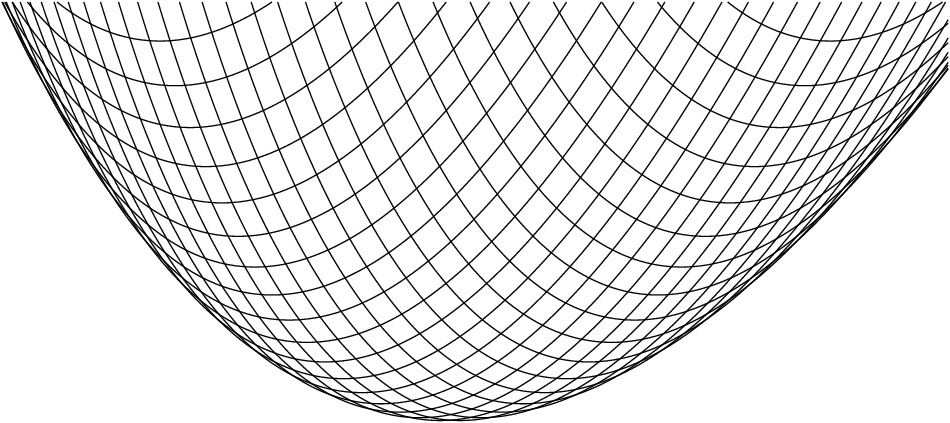}
\caption{The images of the grid lines $\{y_1=\const\}$ and
$\{y_2=\const\}$ under the map
$\tilde y\mapsto \mathcal A_x(\tilde y)$. Here
we take 
$\Theta(x,y)=xy+x^2y^2$ and $x=0.1$, so
$\mathcal A_x(y_1,y_2)=(y_1+y_2+0.2(y_1^2+y_2^2),2(y_1^2+y_2^2))$.
Note that the grid lines compress near the image of the diagonal $\{y_1=y_2\}$,
which is consistent with the degeneration of the Jacobian~\eqref{e:A-x-J}.}
\label{f:geometry-grid}
\end{figure}
%%%%%%%%%%%%%%%%%%%%%%%%%%%%%%%%%%%%%%%%%%%%%%%%%%%%%%%%%%%%%%%%%%%%%%%%%%%%%%%%

%%%%%%%%%%%%%%%%%%%%%%%%%%%%%%%%%%%%%%%%%%%%%%%%%%%%%%%%%%%%%%%%%%%%%%%%%%%%%%%%
\subsubsection{Transversality}

Since $F_{(y_1,y_2)}=F_{(y_2,y_1)}$, we see that $F_{\tilde y}(x)$ is a smooth function
of $(x,\hat y)$, where we put
\begin{equation}
  \label{e:y-pm-def}
\hat y:=(y_+,y_-),\quad
y_+:=y_1+y_2,\quad
y_-:=(y_1-y_2)^2.
\end{equation}
Note that $y_1^2+y_2^2=\tfrac12 (y_- + y_+^2)$ and
\begin{equation}
  \label{e:hat-y-det}
\partial_{\tilde y}\hat y=\begin{pmatrix}
1 & 1 \\
2(y_1-y_2) & 2(y_2-y_1)
\end{pmatrix}.
\end{equation}
The Taylor expansion~\eqref{e:phase-cooked} fixes all derivatives
$\partial^\alpha\Theta(0,0)$ with $|\alpha|\leq 4$, which gives
the Taylor expansions for the derivatives of~$\Theta$ as $(x,y)\to (0,0)$:
\begin{equation}
  \label{e:taylor-Theta}
\begin{aligned}
\partial_x\Theta(x,y) &=
y+2xy^2+\mathcal O(|(x,y)|^4),
\\
\partial_x^2\Theta(x,y) &=
2y^2 + \mathcal O(|(x,y)|^3).
\end{aligned}
\end{equation}
These in turn imply that
\begin{equation}
  \label{e:taylor-F}
\begin{aligned}
F_{\tilde y}(x) &=
y_1+y_2+2x(y_1^2+y_2^2)+\mathcal O(|(x,\tilde y)|^4),
\\
\partial_x F_{\tilde y}(x) &=
2(y_1^2+y_2^2) + \mathcal O(|(x,\tilde y)|^3),
\\
\partial_x^2 F_{\tilde y}(x) &=
\mathcal O(|(x,\tilde y)|^2).
\end{aligned}
\end{equation}
In particular, $\|F_{\tilde y}\|_{C^2(I_0)}\leq C_\Theta r_0$.

From the expansions~\eqref{e:taylor-F}, we get the following expansions
in the $(x,\hat y)$ coordinates:
\begin{equation}
  \label{e:taylor-F-new}
\begin{aligned}
F_{\tilde y}(x) &= y_+ + x(y_- + y_+^2) + \mathcal O(|x|^4+|y_-|^2+|y_+|^4),
\\
\partial_x F_{\tilde y}(x) &= y_- + y_+^2 + \mathcal O(|x|^3 + |y_-|^{\frac32}
+|y_+|^3).
\end{aligned}
\end{equation}
Moreover, the map $\mathcal A_x(\tilde y)$ defined in~\eqref{e:A-x-cal-def} depends smoothly on $x,\hat y$ and
\begin{equation}
  \label{e:A-x-d-hat}
\partial_{\hat y}\mathcal A_x(\tilde y) =
\begin{pmatrix}
1 & 0 \\ 0 & 1
\end{pmatrix}
+\mathcal O(|(x,\hat y)|).
\end{equation}
Combining this with~\eqref{e:hat-y-det}, we see that
the Jacobian of $\mathcal A_x$ in the $\tilde y$ variables is given by
\begin{equation}
  \label{e:A-x-J}
|\det \partial_{\tilde y}\mathcal A_x(\tilde y)|= 4|y_1-y_2|(1+\mathcal O(r_0))\quad\text{for all }
x\in I_0,\ \tilde y\in I_0^2.
\end{equation}
We can now show transversality of our family of functions:
%%%%%%%%%%%%%%%%%%%%%%%%%%%%%%%%%%%%%%%%%%%%%%%%%%%%%%%%%%%%%%%%%%%%%%%%%%%%%%%%
\begin{lemma}
\label{l:distance-verified}
Assume that $\tilde y=(y_1,y_2),\tilde y'=(y'_1,y'_2)\in I_0^2$ and define
$\hat y,\hat y'$ following~\eqref{e:y-pm-def}. Then for $r_0$ small enough
depending only on~$\Theta$
and some constant $\widehat C_\Theta$ depending only on~$\Theta$, we have
\begin{align}
  \label{e:distance-verified-0}
\|F_{\tilde y}-F_{\tilde y'}\|_{C^2(I_0)} &\leq 3 |\tilde y-\tilde y'|,
\\
  \label{e:distance-verified-1}
\|F_{\tilde y}-F_{\tilde y'}\|_{C^2(I_0)} &\leq \widehat C_\Theta|\hat y-\hat y'|,
\\  
  \label{e:distance-verified-2}
\inf_{x\in I_0}|\mathcal A_x(\tilde y)-\mathcal A_x(\tilde y')|
&\geq \tfrac12 |\hat y-\hat y'|.
\end{align}
Consequently the family $\{F_{\tilde y}\colon \tilde y\in I_0^2\}$
is $\mathfrak T_{\Theta}$-transversal on~$I_0$ for $\mathfrak T_\Theta:=2\widehat C_\Theta$.
\end{lemma}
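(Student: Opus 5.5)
The plan is to treat all three bounds as consequences of the smooth dependence of $F_{\tilde y}$ on $(x,\hat y)$ together with the Jacobian expansion~\eqref{e:A-x-d-hat}. Write $F_{\tilde y}(x)=G(x,\hat y)$ with $G$ smooth near the origin; this is legitimate because $F_{\tilde y}$ is a smooth symmetric function of $(y_1,y_2)$, hence a smooth function of $y_+$ and $y_-=(y_1-y_2)^2$ (by Whitney's theorem on even functions applied in the variable $y_1-y_2$). The transversality conclusion at the end is immediate from~\eqref{e:distance-verified-1} and~\eqref{e:distance-verified-2}, since they combine to
$$
\inf_x|\mathcal A_x(\tilde y)-\mathcal A_x(\tilde y')|\geq \tfrac12|\hat y-\hat y'|\geq (2\widehat C_\Theta)^{-1}\|F_{\tilde y}-F_{\tilde y'}\|_{C^2(I_0)}.
$$

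For~\eqref{e:distance-verified-0}, I will apply the mean value theorem in $\tilde y$ to $\partial_x^k F_{\tilde y}(x)$ for $k=0,1,2$. We have $\partial_{y_j}F_{\tilde y}(x)=\partial_y\partial_x\Theta(x,y_j)$. By~\eqref{e:taylor-Theta}, $\partial_y\partial_x\Theta=1+\mathcal O(r_0)$, while $\partial_y\partial_x^2\Theta$ and $\partial_y\partial_x^3\Theta$ are $\mathcal O(r_0)$. Hence
$$
\sum_k|\partial_x^kF_{\tilde y}(x)-\partial_x^kF_{\tilde y'}(x)|\leq (1+\mathcal O(r_0))(|y_1-y_1'|+|y_2-y_2'|)\leq(\sqrt2+\mathcal O(r_0))|\tilde y-\tilde y'|,
$$
which is at most $3|\tilde y-\tilde y'|$ once $r_0$ is small. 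The bound~\eqref{e:distance-verified-1} follows the same way using $G$: the derivatives $\partial_x^k\partial_{\hat y}G$ for $k\le2$ are bounded on a fixed compact neighborhood, and the image of $I_0^2$ under $\tilde y\mapsto\hat y$ is convex enough (a product of intervals $[-2r_0,2r_0]\times[0,4r_0^2]$) to integrate along straight segments.

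For~\eqref{e:distance-verified-2}, fix $x\in I_0$ and view $\mathcal A_x$ as the smooth map $\hat y\mapsto (G,\partial_xG)(x,\hat y)$ on the convex set $[-2r_0,2r_0]\times[0,4r_0^2]$. By~\eqref{e:A-x-d-hat} its differential is $\mathrm{Id}+\mathcal O(r_0)$ there. Writing
$$
\mathcal A_x(\tilde y)-\mathcal A_x(\tilde y')=\int_0^1 \partial_{\hat y}\mathcal A_x\big(\hat y'+t(\hat y-\hat y')\big)\,dt\cdot(\hat y-\hat y'),
$$
the averaged matrix is $\mathrm{Id}+\mathcal O(r_0)$, so the difference has norm at least $(1-C_\Theta r_0)|\hat y-\hat y'|\geq\frac12|\hat y-\hat y'|$ uniformly in $x$.

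I expect the main obstacle to be justifying that $G$ is smooth in $\hat y$ with the uniform expansion~\eqref{e:A-x-d-hat} up to the boundary $y_-=0$. This is where Whitney's even-function theorem, or an explicit Taylor-with-remainder argument, is needed. A secondary point is ensuring $r_0$ is small enough that the segment stays in the domain where the $\mathcal O(r_0)$ bounds hold.
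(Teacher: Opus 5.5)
Your proposal is correct and follows essentially the paper's proof: \eqref{e:distance-verified-0} comes from a first-order expansion in $\tilde y$, which the paper phrases via~\eqref{e:taylor-F} as the jet difference being $(y_1-y'_1+y_2-y'_2,0,0)+\mathcal O(r_0|\tilde y-\tilde y'|)$; \eqref{e:distance-verified-1} comes from smoothness of $F_{\tilde y}(x)$ in $(x,\hat y)$; and \eqref{e:distance-verified-2} comes from~\eqref{e:A-x-d-hat}, giving $\mathcal A_x(\tilde y)-\mathcal A_x(\tilde y')=\hat y-\hat y'+\mathcal O(r_0|\hat y-\hat y'|)$, which is your averaged-Jacobian argument. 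Your appeal to Whitney's even-function theorem only makes explicit the smoothness in $\hat y$ that the paper takes for granted; one small correction is that the image of $I_0^2$ in the $\hat y$ variables is the non-convex set $\{|y_+|+\sqrt{y_-}\leq 2r_0\}$, only contained in the box $[-2r_0,2r_0]\times[0,4r_0^2]$, but this is harmless because your $G$ is smooth on the whole box, so the straight segments may be taken there.
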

%%%%%%%%%%%%%%%%%%%%%%%%%%%%%%%%%%%%%%%%%%%%%%%%%%%%%%%%%%%%%%%%%%%%%%%%%%%%%%%%
\begin{proof}
To see~\eqref{e:distance-verified-0}, we use~\eqref{e:taylor-F} to show that
for each $x\in I_0$
$$
\begin{gathered}
(F_{\tilde y}(x)-F_{\tilde y'}(x),
\partial_x F_{\tilde y}(x)-\partial_x F_{\tilde y'}(x),
\partial_x^2 F_{\tilde y}(x)-\partial_x^2 F_{\tilde y'}(x))
\\
=(y_1-y'_1+y_2-y'_2,0,0)+\mathcal O(r_0|\tilde y-\tilde y'|).
\end{gathered}
$$
The estimate~\eqref{e:distance-verified-1} is immediate because
$F_{\tilde y}(x)$ is smooth in $x,\hat y$. To show~\eqref{e:distance-verified-2},
we note that by~\eqref{e:A-x-d-hat} and
the smoothness of $\mathcal A_{\tilde y}(x)$ in $x,\hat y$ we have
for all $x\in I_0$
$$
\mathcal A_x(\tilde y)-\mathcal A_x(\tilde y')=\hat y-\hat y'+\mathcal O(r_0|\hat y-\hat y'|).
$$
It remains to make $r_0$ small enough so that $\mathcal O(r_0)\leq \frac12$.
\end{proof}
%%%%%%%%%%%%%%%%%%%%%%%%%%%%%%%%%%%%%%%%%%%%%%%%%%%%%%%%%%%%%%%%%%%%%%%%%%%%%%%%

%%%%%%%%%%%%%%%%%%%%%%%%%%%%%%%%%%%%%%%%%%%%%%%%%%%%%%%%%%%%%%%%%%%%%%%%%%%%%%%%
\subsubsection{Discretization and pigeonholing}

The Jacobian formula~\eqref{e:A-x-J} suggests that we should split the space
of possible $\tilde y$ into dyadic pieces according to the size of $|y_1-y_2|$. Define
the set of scales
$$
\widetilde{\mathcal R}:= \{\tilde r\text{ dyadic}\colon \sqrt r\leq \tilde r\leq 2r_0\}\cup\{0\}.
$$
For $\tilde r\in\widetilde{\mathcal R}$, put
\begin{equation}
\label{e:Sigma-r-def}
\Sigma_{\tilde r}:=\begin{cases}
\{(y_1,y_2)\in I_0^2\colon \tilde r\leq |y_1-y_2|\leq 2\tilde r\}, &\text{if }\tilde r >0;
\\
\{(y_1,y_2)\in I_0^2\colon |y_1-y_2|\leq 2\sqrt r\}, &\text{if }\tilde r=0.
\end{cases}
\end{equation}
Then
\begin{equation}
  \label{e:Sigma-union}
I_0^2=\bigcup_{\tilde r\in\widetilde{\mathcal R}}\Sigma_{\tilde r}.
\end{equation}
The discretization of $Y^2\cap \Sigma_{\tilde r}$ is given by the set
$\mathcal D_r(Y^2\cap \Sigma_{\tilde r})$ of dyadic $r$-squares intersecting it
(see~\eqref{e:D-r-X-def}).
The next lemma discretizes the corresponding set of functions~$F_{\tilde y}$:
%%%%%%%%%%%%%%%%%%%%%%%%%%%%%%%%%%%%%%%%%%%%%%%%%%%%%%%%%%%%%%%%%%%%%%%%%%%%%%%%
\begin{lemma}
  \label{l:function-projection}
For each $\tilde r\in\widetilde{\mathcal R}$, there exists an $r$-separated
set of functions in $C^2(I_0)$
\begin{equation}
  \label{e:F-r-tilde-def}
\mathbb F_{\tilde r} \subset \{F_{\tilde y}\colon \tilde y\in Y^2\cap \Sigma_{\tilde r}\}
\end{equation}
and a surjective map
\begin{equation}
  \label{e:function-projection}
\tilde{\mathbf y}\in\mathcal D_r(Y^2\cap\Sigma_{\tilde r})\ \mapsto\
F_{\tilde{\mathbf y},\tilde r}\in \mathbb F_{\tilde r}
\end{equation}
such that for each $\tilde{\mathbf y}\in\mathcal D_r(Y^2\cap\Sigma_{\tilde r})$
we have
\begin{equation}
  \label{e:function-projection-close}
\|F_{\tilde y}-F_{\tilde{\mathbf y},\tilde r}\|_{C^2(I_0)}\leq 7 r\quad\text{for all }\tilde y\in\tilde{\mathbf y}.
\end{equation}
\end{lemma}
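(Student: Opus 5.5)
We construct $\mathbb F_{\tilde r}$ greedily as a maximal $r$-separated subfamily of a set of representatives, one chosen in each grid square, and define the map by sending each square to a nearby representative. First, for each $\tilde{\mathbf y}\in\mathcal D_r(Y^2\cap\Sigma_{\tilde r})$ pick a point $\tilde y_{\tilde{\mathbf y}}\in\tilde{\mathbf y}\cap Y^2\cap\Sigma_{\tilde r}$, which exists by the definition~\eqref{e:D-r-X-def}. This gives the finite family
$$
\mathcal G:=\{F_{\tilde y_{\tilde{\mathbf y}}}\colon \tilde{\mathbf y}\in\mathcal D_r(Y^2\cap\Sigma_{\tilde r})\}\subset\{F_{\tilde y}\colon\tilde y\in Y^2\cap\Sigma_{\tilde r}\}.
$$
The family is finite because $Y^2\cap\Sigma_{\tilde r}\subset I_0^2$ is bounded.

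Next, let $\mathbb F_{\tilde r}\subset\mathcal G$ be a maximal subset that is $r$-separated in the $C^2(I_0)$ norm. Such a subset exists since $\mathcal G$ is finite, and~\eqref{e:F-r-tilde-def} then holds automatically. By maximality, every $G\in\mathcal G$ satisfies $\|G-F\|_{C^2(I_0)}\leq r$ for some $F\in\mathbb F_{\tilde r}$; if $G\in\mathbb F_{\tilde r}$ we choose $F:=G$. Define $F_{\tilde{\mathbf y},\tilde r}$ to be this $F$ for $G=F_{\tilde y_{\tilde{\mathbf y}}}$. The map is surjective: each $F\in\mathbb F_{\tilde r}$ equals $F_{\tilde y_{\tilde{\mathbf y}}}$ for some square $\tilde{\mathbf y}$, and that square is sent to $F$ itself.

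For the closeness bound~\eqref{e:function-projection-close}, take $\tilde y\in\tilde{\mathbf y}$. Both $\tilde y$ and $\tilde y_{\tilde{\mathbf y}}$ lie in the same $r$-square, so $|\tilde y-\tilde y_{\tilde{\mathbf y}}|\leq\sqrt2\,r$. We need $\tilde y\in I_0^2$ in order to apply~\eqref{e:distance-verified-0}. This holds because $Y\subset I'_0$, so every square meeting $Y^2$ lies in $(I'_0(r))^2\subset I_0^2$ once $r<0.6r_0$. For larger $r$, $F_{\tilde y}$ can be extended using smoothness of $\Theta$ on $\mathcal U\supset I_0^2$ after shrinking $r_0$; alternatively, the finitely many such scales can be absorbed into constants. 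Then~\eqref{e:distance-verified-0} gives
$$
\|F_{\tilde y}-F_{\tilde y_{\tilde{\mathbf y}}}\|_{C^2(I_0)}\leq 3\sqrt2\,r,
$$
and the triangle inequality yields
$$
\|F_{\tilde y}-F_{\tilde{\mathbf y},\tilde r}\|_{C^2(I_0)}\leq 3\sqrt2\,r+r<7r.
$$

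The only delicate point is the domain issue just described: ensuring the square $\tilde{\mathbf y}$ lies inside $I_0^2$ so that Lemma~\ref{l:distance-verified} applies. This is handled by $Y\subset I'_0$ together with $r$ small compared to $r_0$, or by using $C^2$ bounds for $\partial_x\Theta$ slightly beyond $I_0$. Everything else is routine.
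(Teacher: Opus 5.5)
Your proof is correct and is essentially the paper's argument: a maximal $r$-separated family, one representative $\tilde y_{\tilde{\mathbf y}}$ per square, and the triangle inequality with \eqref{e:distance-verified-0}. The differences are cosmetic. You extract the separated set from the finite family of representatives and get surjectivity by sending each representative that lies in $\mathbb F_{\tilde r}$ to itself, whereas the paper takes a maximal separated subset of all $F_{\tilde y}$ and then shrinks it to the image; and your $\sqrt2\,r$ sharpens the paper's $2r$.

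The domain issue you flag is assumed silently in the paper and is harmless. Only small $r$ is ever used (Corollary~\ref{c:OWY} requires it), and in any case \eqref{e:function-projection-close} can be read for $\tilde y\in\tilde{\mathbf y}\cap I_0^2$, where your argument applies verbatim. So your ``extend $\Theta$'' alternative, which would fail for $r$ comparable to $1$ since the square can leave $\mathcal U$, is not needed.
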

%%%%%%%%%%%%%%%%%%%%%%%%%%%%%%%%%%%%%%%%%%%%%%%%%%%%%%%%%%%%%%%%%%%%%%%%%%%%%%%%
\begin{proof}
Let $\mathbb F_{\tilde r}$ be a maximal $r$-separated set in
$\{F_{\tilde y}\colon \tilde y\in Y^2\cap \Sigma_{\tilde r}\}$
with respect to the $C^2(I_0)$ norm. For each $\tilde{\mathbf y}\in\mathcal D_r(Y^2\cap \Sigma_{\tilde r})$,
pick $\tilde y_{\tilde{\mathbf y}}\in \tilde{\mathbf y}\cap Y^2\cap \Sigma_{\tilde r}$.
By the maximality of~$\mathbb F_{\tilde r}$, we can choose
$F_{\tilde{\mathbf y},\tilde r}\in\mathbb F_{\tilde r}$ such that
$$
\|F_{\tilde y_{\tilde{\mathbf y}}}-F_{\tilde{\mathbf y},\tilde r}\|_{C^2(I_0)}\leq r.
$$
For each $\tilde y\in \tilde{\mathbf y}$,
we have $|\tilde y-\tilde y_{\tilde{\mathbf y}}|\leq 2r$ and thus by~\eqref{e:distance-verified-0}
$$
\|F_{\tilde y}-F_{\tilde y_{\tilde{\mathbf y}}}\|_{C^2(I_0)}\leq 6r
$$
This gives~\eqref{e:function-projection-close}.
Finally, we make the map~\eqref{e:function-projection} surjective
by shrinking $\mathbb F_{\tilde r}$ if necessary. 
\end{proof}
%%%%%%%%%%%%%%%%%%%%%%%%%%%%%%%%%%%%%%%%%%%%%%%%%%%%%%%%%%%%%%%%%%%%%%%%%%%%%%%%
When $\tilde r$ is small, many different $r$-squares $\tilde{\mathbf y}$ can
project to the same function $F_{\tilde{\mathbf y},\tilde r}$.
We thus define the multiplicity of a function: for $F\in \mathbb F_{\tilde r}$, put
\begin{equation}
  \label{e:multiplicity-def}
M_{\tilde r}(F):=\#\{\tilde{\mathbf y}\in \mathcal D_r(Y^2\cap \Sigma_{\tilde r})\colon
F_{\tilde{\mathbf y},\tilde r}=F
\}.
\end{equation}
We now split the set of functions according to multiplicity.
For a dyadic number $m\geq 1$, define
\begin{equation}
  \label{e:F-r-m-def}
\mathbb F_{\tilde r,m}:=\{F\in \mathbb F_{\tilde r}\colon m\leq M_{\tilde r}(F)<2m\},\quad
\mathbb F_{\tilde r}=\bigsqcup_{m\geq 1} \mathbb F_{\tilde r,m}.
\end{equation}
The set $\mathbb F_{\tilde r,m}$ is $\mathfrak T_{\Theta}$-transversal by Lemma~\ref{l:distance-verified}
and $r$-separated by Lemma~\ref{l:function-projection}.

We finish this subsection with an upper bound on $\#(\mathbb F_{\tilde r,m})$.
Assume that $\tilde{\mathbf y}=\mathbf y_1\times\mathbf y_2\in \mathcal D_r(Y^2\cap\Sigma_{\tilde r})$.
Then $\mathbf y_1\in \mathcal D_r(Y)$ and $\mathbf y_2\in \mathcal D_r(Y\cap \mathbf y_1(2(\tilde r+\sqrt r)))$. 
The interval $I:=\mathbf y_1(2(\tilde r+\sqrt r))$ satisfies
$|I|\leq 5(\tilde r+\sqrt r)$. We have for each such interval~$I$
\begin{equation}
  \label{e:Y-discrete-count}
\begin{aligned}
\#(\mathcal D_r(Y)) &\leq 3|Y|_r\leq 36C_{\AD}^2 r^{-\delta},
\\
\#(\mathcal D_r(Y\cap I)) &\leq 3|Y\cap I|_r \leq 180 C_{\AD}^2(\tilde r+\sqrt r)^\delta r^{-\delta}.
\end{aligned}
\end{equation}
Here in the first inequalities we use~\eqref{e:D-r-bound} and in the second inequalities
we use Lemma~\ref{l:AD-cover}. This implies that
$$
\sum_{F\in\mathbb F_{\tilde r}}M_{\tilde r}(F)
=\#(\mathcal D_r(Y^2\cap\Sigma_{\tilde r}))\leq C(\tilde r+\sqrt r)^\delta r^{-2\delta}
$$
which in turn gives
\begin{equation}
  \label{e:F-r-m-bound}
\#(\mathbb F_{\tilde r,m})\leq Cm^{-1}(\tilde r+\sqrt r)^\delta r^{-2\delta}.
\end{equation}

%%%%%%%%%%%%%%%%%%%%%%%%%%%%%%%%%%%%%%%%%%%%%%%%%%%%%%%%%%%%%%%%%%%%%%%%%%%%%%%%
\subsubsection{Geometry of lines and circles}
\label{s:geometry}

In preparation for the proof of the rectangular Katz--Tao property of the set $\mathbb F_{\tilde r,m}$
(see Lemma~\ref{l:RKT-bound} below), we will study the level curves of the components of
$$
\mathcal A_x(\tilde y)=(F_{\tilde y}(x),\partial_x F_{\tilde y}(x))=
(\partial_x\Theta(x,y_1)+\partial_x\Theta(x,y_2),
\partial_x^2 \Theta(x,y_1)+\partial_x^2 \Theta(x,y_2)),
$$
together with the level sets of the function $y_1-y_2$. See Figure~\ref{f:geometry}.

The next lemma establishes the basic properties of the components
of the functions $\partial_x\Theta(x,y)$ and $\partial_x^2\Theta(x,y)$.
%%%%%%%%%%%%%%%%%%%%%%%%%%%%%%%%%%%%%%%%%%%%%%%%%%%%%%%%%%%%%%%%%%%%%%%%%%%%%%%%
\begin{lemma}
\label{l:geometry-fns}
For $r_0$ small enough depending only on~$\Theta$,
we have on $[-10r_0,10r_0]^2$
$$
\begin{aligned}
\partial_x \Theta(x,y) &= a_x(y),
\\
\partial_x^2 \Theta(x,y) &= c_{\min}(x) + 2b_x(y)^2
\end{aligned}
$$
where the functions $a_x(y),b_x(y)$ are smooth in $(x,y)\in [-10r_0,10r_0]^2$,
the function $c_{\min}(x)$ is smooth in $x\in [-10r_0,10r_0]$, and
\begin{gather}
  \label{e:geofun-a}
a_x(y)=y+\mathcal O(r_0^3),\quad
\partial_y a_x(y)=1+\mathcal O(r_0^2),\quad
\partial_y^2 a_x(y)=\mathcal O(r_0);
\\
  \label{e:geofun-b}
b_x(y)=y+\mathcal O(r_0^2),\quad
\partial_y b_x(y)=1+\mathcal O(r_0);
\\
  \label{e:geofun-c}
c_{\min}(x) = \mathcal O(r_0^3).
\end{gather}
Moreover, the equation $b_x(y)=0$ has a unique solution $y=y_{\min}(x)$ on $[-10r_0,10r_0]$,
which depends smoothly on~$x$ and satisfies
\begin{equation}
  \label{e:geofun-min}
y_{\min}(x)=\mathcal O(r_0^2).
\end{equation}
\end{lemma}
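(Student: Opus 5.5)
The plan is to read everything off the Taylor expansion~\eqref{e:taylor-Theta}, organized by the Taylor theorem with integral remainder on the box $[-10r_0,10r_0]^2$. First set $a_x(y):=\partial_x\Theta(x,y)$. From~\eqref{e:phase-cooked} we have $\partial_x\Theta(x,y)=y+2xy^2+\mathcal O(|(x,y)|^4)$, and the remainder is a smooth function vanishing to fourth order at the origin, so its $y$-derivatives vanish to the corresponding lower orders. On the box this gives $a_x(y)=y+\mathcal O(r_0^3)$, $\partial_y a_x=1+4xy+\mathcal O(r_0^3)=1+\mathcal O(r_0^2)$ and $\partial_y^2a_x=4x+\mathcal O(r_0^2)=\mathcal O(r_0)$. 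This proves~\eqref{e:geofun-a}.

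Next, consider $g(x,y):=\partial_x^2\Theta(x,y)=2y^2+\mathcal O(|(x,y)|^3)$. We have $\partial_y^2 g=4+\mathcal O(r_0)>0$ on the box, so for each fixed $x$ the function $y\mapsto g(x,y)$ is strictly convex. Moreover, $\partial_y g(x,y)=4y+\mathcal O(|(x,y)|^2)$. The implicit function theorem, or a direct monotonicity argument since $\partial_y(\partial_y g)\ne0$, gives a unique zero $y=y_{\min}(x)$ of $\partial_y g(x,\cdot)$ in $[-10r_0,10r_0]$. To see that it exists inside the box, note that $\partial_y g(x,\pm10r_0)=\pm40r_0+\mathcal O(r_0^2)$ has opposite signs. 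This zero is smooth in $x$, and since $4y_{\min}=\mathcal O(r_0^2)$ we get~\eqref{e:geofun-min}. Put $c_{\min}(x):=g(x,y_{\min}(x))$, which is smooth. Because $|y_{\min}|\lesssim r_0^2$, we have $c_{\min}(x)=2y_{\min}^2+\mathcal O(r_0^3)=\mathcal O(r_0^3)$, which is~\eqref{e:geofun-c}.

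Finally, define $b_x$ by the Morse-lemma factorization in one variable. By Taylor's formula with integral remainder around $y_{\min}(x)$,
$$
g(x,y)-c_{\min}(x)=(y-y_{\min}(x))^2\,q(x,y),\qquad q(x,y):=\int_0^1(1-t)\,\partial_y^2g\big(x,y_{\min}(x)+t(y-y_{\min}(x))\big)\,dt .
$$
Here $q$ is smooth and $q=2+\mathcal O(r_0)$. Set
$$
b_x(y):=(y-y_{\min}(x))\sqrt{q(x,y)/2}.
$$
This is smooth, satisfies $g=c_{\min}+2b_x^2$, and its only zero in the box is $y_{\min}(x)$, which gives the uniqueness claim. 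It remains to check the estimates~\eqref{e:geofun-b}:
\begin{itemize}
\item $\partial_y b_x=\sqrt{q/2}+(y-y_{\min})\,\partial_y\sqrt{q/2}=1+\mathcal O(r_0)$, using that $\partial_y q$ is bounded and $|y-y_{\min}|\lesssim r_0$;
\item $b_x(y)=y+\mathcal O(r_0^2)$, which needs the sharper bound $q-2=\mathcal O(r_0)$ multiplied by $|y-y_{\min}|=\mathcal O(r_0)$, together with $y_{\min}=\mathcal O(r_0^2)$.
\end{itemize}
For the sharper bound on $q$, use $\partial_y^2 g=4+\mathcal O(|(x,y)|)$, which follows from the cubic remainder being smooth.

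The main point requiring care is precisely this last estimate $b_x(y)=y+\mathcal O(r_0^2)$ rather than merely $\mathcal O(r_0)$. The whole proof rests on the remainders in~\eqref{e:taylor-Theta} being smooth functions vanishing to the stated order, so that each $y$-derivative loses exactly one power of $r_0$. Everything else is routine, provided $r_0$ is chosen small enough that all the $\mathcal O(r_0)$ perturbations stay below $1/2$.
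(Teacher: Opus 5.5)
Your proposal is correct and follows essentially the same route as the paper: $a_x:=\partial_x\Theta$, $y_{\min}(x)$ as the unique zero of $\partial_y\partial_x^2\Theta$ (by monotonicity plus a sign change at $y=\pm10r_0$), $c_{\min}(x):=\partial_x^2\Theta(x,y_{\min}(x))$, and $b_x(y):=(y-y_{\min}(x))\sqrt{B(x,y)}$ from the integral-remainder Taylor formula at $y_{\min}(x)$, where your $q/2$ is exactly the paper's $B$. Your estimates are derived with the same bookkeeping of powers of $r_0$, including $b_x(y)=y+\mathcal O(r_0^2)$ from $B=1+\mathcal O(r_0)$, $|y-y_{\min}|=\mathcal O(r_0)$ and $y_{\min}=\mathcal O(r_0^2)$.
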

%%%%%%%%%%%%%%%%%%%%%%%%%%%%%%%%%%%%%%%%%%%%%%%%%%%%%%%%%%%%%%%%%%%%%%%%%%%%%%%%
\begin{remark}
\label{r:useful-circle}
It might be useful
to keep in mind the model case when $\Theta(x,y)=xy+x^2y^2$.
Then $a_x(y)=y+2xy^2$, $b_x(y)=y$, $c_{\min}(x)=0$, and $y_{\min}(x)=0$.
\end{remark}
%%%%%%%%%%%%%%%%%%%%%%%%%%%%%%%%%%%%%%%%%%%%%%%%%%%%%%%%%%%%%%%%%%%%%%%%%%%%%%%%
%%%%%%%%%%%%%%%%%%%%%%%%%%%%%%%%%%%%%%%%%%%%%%%%%%%%%%%%%%%%%%%%%%%%%%%%%%%%%%%%
\begin{figure}
\includegraphics{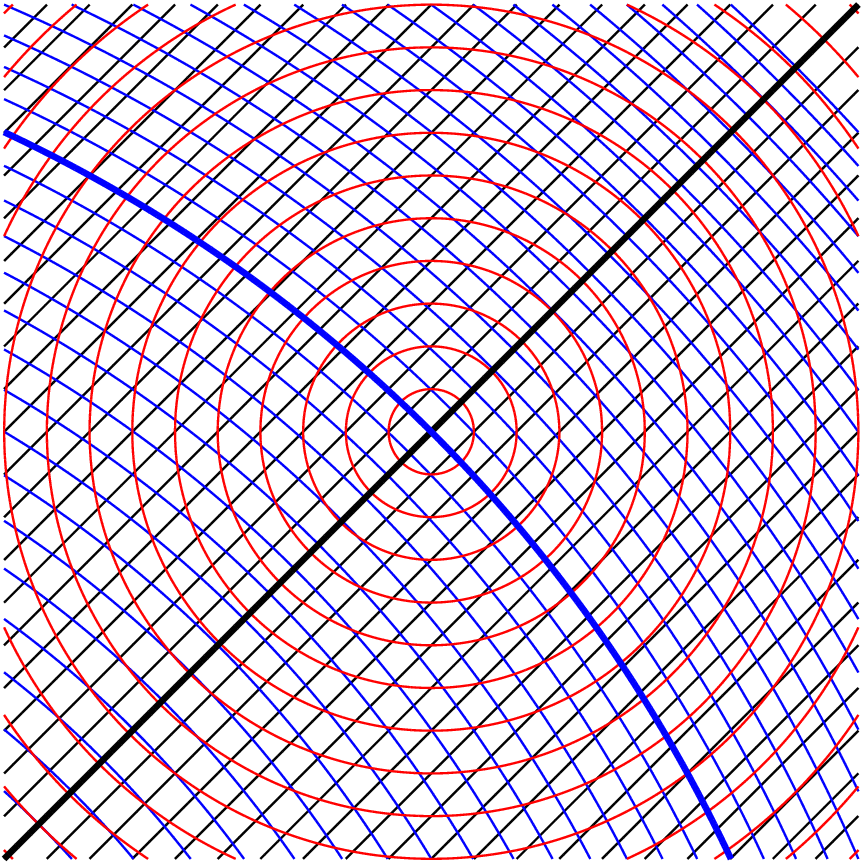}
\caption{The level curves in the $(y_1,y_2)$ plane of the first component of $\mathcal A_x(\tilde y)$
(in blue), the second component of $\mathcal A_x(\tilde y)$ (in red), and $y_1-y_2$ (in black).
Here $\Theta,x$ are the same as in Figure~\ref{f:geometry-grid}. The thick lines
separate the plane into the quadrants defined in~\eqref{e:quadrants}.}
\label{f:geometry}
\end{figure}
%%%%%%%%%%%%%%%%%%%%%%%%%%%%%%%%%%%%%%%%%%%%%%%%%%%%%%%%%%%%%%%%%%%%%%%%%%%%%%%%

\begin{proof}
From~\eqref{e:taylor-Theta} we have
\begin{equation}
  \label{e:taylor-Theta-2}
\begin{aligned}
\partial_x\Theta(x,y) &= y + 2xy^2+ \mathcal O(|(x,y)|^4),
\\
\partial_y\partial_x \Theta(x,y) &= 1 + 4xy + \mathcal O(|(x,y)|^3),
\\
\partial_y^2\partial_x \Theta(x,y) &= 4x + \mathcal O(|(x,y)|^2),
\\
\partial_x^2\Theta(x,y) &= 2y^2 + \mathcal O(|(x,y)|^3),
\\
\partial_y \partial_x^2 \Theta(x,y) &= 4y + \mathcal O(|(x,y)|^2),
\\
\partial_y^2\partial_x^2 \Theta(x,y) &= 4 + \mathcal O(|(x,y)|).
\end{aligned}
\end{equation}
The properties~\eqref{e:geofun-a} of $a_x(y)$ now follow immediately.
Next, we see that the function $y\mapsto \partial_y\partial_x^2\Theta(x,y)$
is strictly increasing on $[-10r_0,10r_0]$ and has opposite signs
on the endpoints of this interval. It follows that
the equation
$$
\partial_y \partial_x^2\Theta(x,y)=0
$$
has a unique solution $y\in [-10r_0,10r_0]$, which we denote
by $y_{\min}(x)$. This function is smooth in $x\in [-10r_0,10r_0]$ and~\eqref{e:geofun-min}
holds. We have $\partial_y^2\partial_x^2\Theta>0$, so
the function $y\mapsto \partial_x^2\Theta(x,y)$ is strictly convex
and $y_{\min}(x)$ is its minimum point. Define
$$
c_{\min}(x):=\partial_x^2\Theta(x,y_{\min}(x)),
$$
then $c_{\min}(x)$ depends smoothly on $x\in [-10r_0,10r_0]$
and~\eqref{e:geofun-c} holds.

Finally, using the Taylor expansion at $y=y_{\min}(x)$ we have
$$
\begin{aligned}
\partial_x^2\Theta(x,y)&=c_{\min}(x)
+2(y-y_{\min}(x))^2 B(x,y),
\\
B(x,y)\,&:=\frac12 \int_0^1 (1-t)\partial_y^2\partial_x^2\Theta(x,ty+(1-t)y_{\min}(x))\,dt.
\end{aligned}
$$
The function $B(x,y)$ is smooth on $[-10r_0,10r_0]^2$ and satisfies $B(x,y)=1+\mathcal O(r_0)$. We now put
$$
b_x(y):=(y-y_{\min}(x))\sqrt{B(x,y)}
$$
and this function satisfies~\eqref{e:geofun-b}.
\end{proof}
%%%%%%%%%%%%%%%%%%%%%%%%%%%%%%%%%%%%%%%%%%%%%%%%%%%%%%%%%%%%%%%%%%%%%%%%%%%%%%%%
In terms of the functions introduced in Lemma~\ref{l:geometry-fns}, we have
\begin{equation}
  \label{e:geometry-fns-A}
\mathcal A_x(y_1,y_2)=\big(a_x(y_1)+a_x(y_2),2(c_{\min}(x)+b_x(y_1)^2+b_x(y_2)^2)\big).
\end{equation}
The function $\tilde y\mapsto \partial_xF_{\tilde y}(x)$ has a local minimum
at $\tilde y=(y_{\min}(x),y_{\min}(x))$. We split $[-10r_0,10r_0]^2$ into four quadrants
using the level curves of $F_{\tilde y}(x)$ and $y_1-y_2$ passing through that point:
\begin{equation}
  \label{e:quadrants}
\begin{gathered}
\mathcal Q_{\varepsilon_1\varepsilon_2}(x)\,
:=\{\tilde y\in [-10r_0,10r_0]^2\colon
\varepsilon_1(F_{\tilde y}(x)-2a_{\min}(x))\geq 0,\
\varepsilon_2(y_2-y_1)\geq 0\}
\\
\text{where }\varepsilon_1,\varepsilon_2 \in\{+,-\},\quad
a_{\min}(x):=a_x(y_{\min}(x)).
\end{gathered}
\end{equation}
We will in particular focus on the top quadrant
\begin{equation}
  \label{e:quadrant++}
\mathcal Q_{++}(x):=
\{\tilde y\in [-10r_0,10r_0]^2\colon
F_{\tilde y}(x)\geq 2a_{\min}(x),\
y_2\geq y_1\}.
\end{equation}
We establish the following technical lemma (which again is easiest to verify in the case
described in Remark~\ref{r:useful-circle}):
%%%%%%%%%%%%%%%%%%%%%%%%%%%%%%%%%%%%%%%%%%%%%%%%%%%%%%%%%%%%%%%%%%%%%%%%%%%%%%%%
\begin{lemma}
  \label{l:quadrant-circle}
Let $x,y_1,y_2\in [-10r_0,10r_0]$, and $\eta=b_x(y_1)^2+b_x(y_2)^2$. Then:
\begin{enumerate}
\item if $(y_1,y_2)\in\mathcal Q_{++}(x)$, then $y_2\geq y_{\min}(x)$ and
$|y_1-y_{\min}(x)|\leq 0.8\sqrt\eta$.
\item if $y_2\geq y_{\min}(x)$ and $|y_1-y_{\min}(x)|\leq 0.8\sqrt\eta$,
then $b_x(y_2)\geq \tfrac12 \sqrt\eta$ and $y_2-y_{\min}(x)\geq \tfrac12\sqrt\eta$.
\end{enumerate}
\end{lemma}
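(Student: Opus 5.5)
Throughout the argument, write $u_j:=b_x(y_j)$ and fix $x$. By~\eqref{e:geofun-b} and~\eqref{e:geofun-min}, $b_x$ is strictly increasing, $b_x(y_{\min}(x))=0$, and
$$
b_x(y)=(y-y_{\min}(x))(1+\mathcal O(r_0)).
$$
The plan is to reduce both parts to elementary inequalities in $(u_1,u_2)$, where the model case is the circle $u_1^2+u_2^2=\eta$ intersected with the half-plane $u_1+u_2\geq 0$.

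The key step is to express the condition $F_{\tilde y}(x)\geq 2a_{\min}(x)$ in terms of $u_1,u_2$. Since $b_x$ is a diffeomorphism near $y_{\min}(x)$, write $a_x(y)=\alpha_x(b_x(y))$ with $\alpha_x$ smooth. From~\eqref{e:geofun-a}, \eqref{e:geofun-b} we get
$$
\alpha_x(0)=a_{\min}(x),\quad
\alpha_x'(u)=1+\mathcal O(r_0),\quad
\alpha_x''=\mathcal O(r_0).
$$
The derivative bound $\partial_y b_x=1+\mathcal O(r_0)$ suffices for $\alpha'$. For $\alpha''$ one also needs $\partial_y^2 b_x=\mathcal O(1)$, which follows from smoothness of $B$. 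This $\alpha''$ bound is the step I expect to need the most care: a loss here would spoil the constant $0.8$. Then
$$
F_{\tilde y}(x)-2a_{\min}(x)=\alpha_x(u_1)+\alpha_x(u_2)-2\alpha_x(0)=(u_1+u_2)(1+\mathcal O(r_0))+\mathcal O(r_0)(u_1^2+u_2^2).
$$
If the paper's finer structure allows, I would rather write this as $\lambda_1u_1+\lambda_2u_2$ with $\lambda_j=\int_0^1\alpha_x'(tu_j)\,dt=1+\mathcal O(r_0)$. That form is exact and avoids the quadratic error.

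For part (1), let $(y_1,y_2)\in\mathcal Q_{++}(x)$. Then $y_2\geq y_1$ gives $u_2\geq u_1$, and the quadrant condition gives $\lambda_1u_1+\lambda_2u_2\geq 0$. Hence $u_2\geq -(\lambda_1/\lambda_2)u_1$, and together with $u_2\geq u_1$ this yields $u_2\geq (1-\mathcal O(r_0))|u_1|\geq 0$. In particular $y_2\geq y_{\min}(x)$. Since $u_1^2+u_2^2=\eta$, we get
$$
u_1^2\leq \frac{\eta}{2-\mathcal O(r_0)},\quad\text{so}\quad |u_1|\leq 0.71\sqrt\eta.
$$
Converting back with $|y_1-y_{\min}(x)|=|u_1|(1+\mathcal O(r_0))$ gives $|y_1-y_{\min}(x)|\leq 0.8\sqrt\eta$ once $r_0$ is small.

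For part (2), the hypotheses give $|u_1|\leq 0.8(1+\mathcal O(r_0))\sqrt\eta\leq 0.81\sqrt\eta$ and $u_2\geq 0$. Then $u_2^2=\eta-u_1^2\geq(1-0.66)\eta$, so
$$
u_2=b_x(y_2)\geq 0.58\sqrt\eta\geq\tfrac12\sqrt\eta.
$$
Finally, $y_2-y_{\min}(x)=u_2(1+\mathcal O(r_0))\geq\tfrac12\sqrt\eta$ for $r_0$ small. The numerical margins ($0.71$ versus $0.8$, and $0.58$ versus $0.5$) absorb all $\mathcal O(r_0)$ errors, so the argument consists of routine bookkeeping once the $\alpha_x$ reparametrization is in place.
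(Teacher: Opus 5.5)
Your proof is correct and is essentially the paper's argument. In both, part (1) reduces to $|b_x(y_1)|\le(1+\mathcal O(r_0))\,b_x(y_2)$ followed by the computation with $\eta=b_x(y_1)^2+b_x(y_2)^2$, and part (2) is identical. The paper obtains that inequality from $|a_x(y_1)-a_{\min}(x)|\le a_x(y_2)-a_{\min}(x)$ (monotonicity of $a_x$) and then passes through $y$-coordinates, which is equivalent to your exact $\lambda_1u_1+\lambda_2u_2$ form.

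Your worry about $\alpha_x''$ is moot, since only $\alpha_x'=1+\mathcal O(r_0)$ enters. This is fortunate: because $\partial_y^2b_x$ is in general only $\mathcal O(1)$, so is $\alpha_x''$, and your claimed bound $\alpha_x''=\mathcal O(r_0)$ does not hold. An $\mathcal O(1)$ bound would still suffice, because $|u_j|=\mathcal O(r_0)$.
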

%%%%%%%%%%%%%%%%%%%%%%%%%%%%%%%%%%%%%%%%%%%%%%%%%%%%%%%%%%%%%%%%%%%%%%%%%%%%%%%%
\begin{proof}
(1) Since $a_x$ is increasing and $(y_1,y_2)\in\mathcal Q_{++}(x)$, we have
$$
|a_x(y_1)-a_{\min}(x)|\leq a_x(y_2)-a_{\min}(x).
$$
In particular, $y_2\geq y_{\min}(x)$ and $|y_1-y_{\min}(x)|\leq (1+\mathcal O(r_0^2))(y_2-y_{\min}(x))$.
This in turn implies that $|b_x(y_1)|\leq (1+\mathcal O(r_0))b_x(y_2)$.
Since $\eta=b_x(y_1)^2+b_x(y_2)^2$, we see that $b_x(y_1)^2\leq (\tfrac12+\mathcal O(r_0))\eta$
and thus $|y_1-y_{\min}(x)|\leq 0.8\sqrt\eta$.

\noindent (2) We have $|b_x(y_1)|\leq (0.8+\mathcal O(r_0))\sqrt\eta$.
Since $\eta=b_x(y_1)^2+b_x(y_2)^2$, we see that $b_x(y_2)^2\geq (0.36-\mathcal O(r_0))\eta$,
which implies that $b_x(y_2)\geq \tfrac12\sqrt\eta$
and $y_2-y_{\min}(x)\geq \tfrac12\sqrt\eta$.
\end{proof}
%%%%%%%%%%%%%%%%%%%%%%%%%%%%%%%%%%%%%%%%%%%%%%%%%%%%%%%%%%%%%%%%%%%%%%%%%%%%%%%%
Next, define the inverses of the maps $a_x,b_x$:
$$
a_x^{-1}:a_x([-10r_0,10r_0])\to [-10r_0,10r_0],\quad
b_x^{-1}:b_x([-10r_0,10r_0])\to [-10r_0,10r_0].
$$
These exist because $\partial_y a_x,\partial_y b_x>0$ and we have
$$
[-9r_0,9r_0]\ \subset\ a_x([-10r_0,10r_0])\cap b_x([-10r_0,10r_0]).
$$
We also see from~\eqref{e:geofun-a} and~\eqref{e:geofun-b} that
for all $z\in [-9r_0,9r_0]$
\begin{gather}
  \label{e:geofun-inv-a}
a_x^{-1}(z)=z+\mathcal O(r_0^3),\quad
\partial_z a_x^{-1}(z)=1+\mathcal O(r_0^2),\quad
\partial_z^2 a_x^{-1}(z)=\mathcal O(r_0);
\\
  \label{e:geofun-inv-b}
b_x^{-1}(z)=z+\mathcal O(r_0^2),\quad
\partial_z b_x^{-1}(z)=1+\mathcal O(r_0).
\end{gather}
We are now ready to describe the intersections of the level curves
of the components of $\mathcal A_x(\tilde y)$ with $\mathcal Q_{++}(x)$
as graphs of functions:
%%%%%%%%%%%%%%%%%%%%%%%%%%%%%%%%%%%%%%%%%%%%%%%%%%%%%%%%%%%%%%%%%%%%%%%%%%%%%%%%
\begin{definition}
  \label{d:admissible}
Let $J\subset I_0$ be an interval. Fix $x\in I_0$. Define the set
of \emph{admissible functions} as functions $f:J\to \mathbb R$ of one of the following forms:
\begin{enumerate}
\item[(i)] $f(y)=y+\eta$ where $\eta\in [-5r_0,5r_0]$;
\item[(ii)] $f(y)=a_x^{-1}(\eta-a_x(y))$ where $\eta\in [-5r_0,5r_0]$;
\item[(iii)] $f(y)=b_x^{-1}(\sqrt{\eta-b_x(y)^2})$
where $\eta\in (0,5r_0^2]$ and we additionally impose the restriction that
$J\subset [y_{\min}(x)-0.8\sqrt\eta,y_{\min}(x)+0.8\sqrt\eta]$. 
\end{enumerate}
\end{definition}
%%%%%%%%%%%%%%%%%%%%%%%%%%%%%%%%%%%%%%%%%%%%%%%%%%%%%%%%%%%%%%%%%%%%%%%%%%%%%%%%
Note that the graph $\{(y_1,y_2)\colon y_1\in J,\ y_2=f(y_1)\}$ of a function of type~(i) lies on a level curve
of $y_1-y_2$, the graph of a function of type~(ii) lies on a level curve of $\tilde y\mapsto F_{\tilde y}(x)$,
and the graph of a function of type~(iii) lies on a level curve of $\tilde y\mapsto \partial_x F_{\tilde y}(x)$.

We also note that in the model case considered in Remark~\ref{r:useful-circle}, 
if we further set $x:=0$, functions
of type~(ii) have the form $f(y)=\eta-y$ and functions of type~(iii)
have the form $f(y)=\sqrt{\eta-y^2}$ where $|y|\leq 0.8\sqrt\eta$.

The following are the key analytic properties of admissible functions,
where we use the definitions from~\S\ref{s:moderate}:
%%%%%%%%%%%%%%%%%%%%%%%%%%%%%%%%%%%%%%%%%%%%%%%%%%%%%%%%%%%%%%%%%%%%%%%%%%%%%%%%
\begin{lemma}
  \label{l:lines-circles}
Fix $x\in I_0$. Then:
\begin{enumerate}
\item Let $f$ be an admissible function on~$J$. Then $f\in\Lip_{10}(J)$.
\item Let $f,g$ be two different admissible functions on~$J$. Then:
\begin{enumerate}
\item If $f,g$ belong to the same type (i), (ii), or~(iii) above, then $f,g$ are parallel.
\item If $f,g$ belong to two different types, then $f,g$ are transversal.
\end{enumerate}
\end{enumerate}
\end{lemma}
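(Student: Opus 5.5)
Throughout, $x\in I_0$ is fixed and we take $r_0$ small depending only on $\Theta$. The plan is to derive all three claims from the expansions \eqref{e:geofun-a}, \eqref{e:geofun-b}, \eqref{e:geofun-inv-a} and \eqref{e:geofun-inv-b}. Each admissible function is then a small perturbation of its model counterpart from Remark~\ref{r:useful-circle} at $x=0$: $y+\eta$, $\eta-y$, and $\sqrt{\eta-y^2}$ with $|y|\le 0.8\sqrt\eta$. Since $\Lip_{10}(J)$ also requires values in $[-10,10]$, we first check that all values lie in $[-10r_0,10r_0]\subset[-10,10]$; this is immediate from $J\subset I_0$, $|\eta|\le 5r_0$, and the near-identity bounds on $a_x^{\pm1}$ and $b_x^{\pm1}$.

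For part (1) we compute derivatives case by case. Type (i) has $f'=1$. For type (ii), $f'(y)=-\partial_z a_x^{-1}(\eta-a_x(y))\,\partial_y a_x(y)=-1+\mathcal O(r_0^2)$. For type (iii), set $w:=\sqrt{\eta-b_x(y)^2}$. Then
$$f'(y)=-\partial_z b_x^{-1}(w)\,\frac{b_x(y)\,\partial_yb_x(y)}{w}.$$
On $J$ we have $|y-y_{\min}(x)|\le 0.8\sqrt\eta$, so $|b_x(y)|\le(0.8+\mathcal O(r_0))\sqrt\eta$ and hence $w\ge 0.55\sqrt\eta$. This gives $|f'|\le 2$. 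Consequently all admissible functions are $10$-Lipschitz.

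For part (2a), the key observation is that two functions of the same type cannot meet, because distinct level curves of the same function are disjoint. For type (i), $g-f$ is the constant $\eta_g-\eta_f\neq0$. For type (ii), $a_x(f(y))=\eta_f-a_x(y)$, so $a_x(g(y))-a_x(f(y))=\eta_g-\eta_f$. Because $\partial a_x^{-1}=1+\mathcal O(r_0^2)$, this yields $g-f=(\eta_g-\eta_f)(1+\mathcal O(r_0^2))$, which has constant sign and ratio of extremes at most $10$. For type (iii), $b_x(g)^2-b_x(f)^2=\eta_g-\eta_f$. Both $b_x(g)$ and $b_x(f)$ are at least $0.5\sqrt{\eta}$ and at most about $\sqrt{5}\,r_0$; moreover their ratio is bounded, because both lie in $[0.55\sqrt{\eta_\bullet},\sqrt{\eta_\bullet}]$ and $\eta_f$, $\eta_g$ are comparable on a common $J$. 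Dividing, $b_x(g)-b_x(f)=(\eta_g-\eta_f)/(b_x(g)+b_x(f))$. Its modulus varies by a factor at most roughly $\sqrt{1/0.55^2}\cdot(1+\mathcal O(r_0))<10$ across $J$. Applying $b_x^{-1}$ then costs only a factor $1+\mathcal O(r_0)$.

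The ratio bound in type (iii) is the step I expect to be the most delicate. The worst case is when one of $\eta_f,\eta_g$ is much larger than the other. In that case the difference $g-f$ is dominated by the larger $\sqrt{\eta}$ and is essentially constant, so parallelism should still follow. I will split into the two regimes $\eta_g\le 4\eta_f$ and $\eta_g>4\eta_f$ to get explicit constants.

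For part (2b), the plan is to use Lemma~\ref{l:transversal-der}. For types (i) versus (ii), the derivatives are $1$ and $-1+\mathcal O(r_0^2)$, which are distinct. For (iii) versus (i) and (iii) versus (ii) the slopes may coincide, so I will use second derivatives instead. Types (i) and (ii) have $f''=\mathcal O(r_0)$, since $\partial^2 a_x^{\pm1}=\mathcal O(r_0)$. For type (iii) we differentiate $b_x(f(y))^2+b_x(y)^2=\eta$ twice. This shows $f''\approx -(1+f'^2)/w\le -c/\sqrt\eta$, which is larger than $c/(\sqrt5 r_0)$ in absolute value, up to $\mathcal O(1)$ errors coming from derivatives of $b_x$. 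This requires a bound on $\partial_y^2 b_x$, which follows from smoothness of $b_x$. Hence $|f''|\gg r_0$ for type (iii), which dominates the $\mathcal O(r_0)$ second derivatives of types (i) and (ii) once $r_0$ is small. Lemma~\ref{l:transversal-der} then gives transversality.
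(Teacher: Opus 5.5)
Your proposal is correct and follows essentially the same route as the paper. The paper also uses derivative bounds for the three types, the level-set identities $a_x(g)-a_x(f)=\eta_g-\eta_f$ and $b_x(g)^2-b_x(f)^2=\eta_g-\eta_f$ for parallelism, and Lemma~\ref{l:transversal-der}: first derivatives for (i) versus (ii), and the large negative second derivative $f''\lesssim -1/\sqrt\eta$ of type~(iii) in the remaining cases.

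One correction: $\eta_f$ and $\eta_g$ need not be comparable, since a short $J$ allows $\eta_g\gg\eta_f$. No regime split is needed, though, because $b_x(f)+b_x(g)$ lies between $\tfrac12(\sqrt{\eta_f}+\sqrt{\eta_g})$ and $\sqrt{\eta_f}+\sqrt{\eta_g}$ on all of $J$ whatever the relative sizes. This is exactly how the paper obtains $g-f\in[\tfrac12(\sqrt{\eta_g}-\sqrt{\eta_f}),3(\sqrt{\eta_g}-\sqrt{\eta_f})]$.
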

%%%%%%%%%%%%%%%%%%%%%%%%%%%%%%%%%%%%%%%%%%%%%%%%%%%%%%%%%%%%%%%%%%%%%%%%%%%%%%%%
\begin{proof}
We first study the first and second derivatives of admissible functions:
\begin{enumerate}
\item[(i)] We have $f'(y)=1$ and $f''(y)=0$.
\item[(ii)] By~\eqref{e:geofun-a} and~\eqref{e:geofun-inv-a} we have $f'(y)=-1+\mathcal O(r_0^2)$ and $f''(y)=\mathcal O(r_0)$.
\item[(iii)] We have $|f'(y)|\leq 3$ and $f''(y)\leq -1/(2\sqrt\eta)$.
To see this we use~\eqref{e:geofun-b} and differentiate the identity
$b_x(y)^2+b_x(f(y))^2=\eta$ in~$y$. This gives
$$
\begin{aligned}
f'(y) &= -(1+\mathcal O(r_0)){b_x(y)\over b_x(f(y))},
\\
f''(y) &= -{(1+\mathcal O(r_0))(1+f'(y)^2) \over b_x(f(y))}.
\end{aligned}
$$
It remains to use Lemma~\ref{l:quadrant-circle}(2) and the fact
that $|y-y_{\min}(x)|\leq 0.8\sqrt\eta$ which together give
$b_x(f(y))\geq \frac12\sqrt\eta$.
\end{enumerate}

We are now ready to prove the statements in the lemma:

\noindent\underline{(1):} it follows from the above computations that $|f'(y)|\leq 3$.

\smallskip

\noindent\underline{(2)(a):} We consider the following cases. In each case $\eta_f,\eta_g$ are
the values of the parameter $\eta$ used to define $f,g$.
\begin{itemize}
\item $f,g$ both type~(i): we have $|g(y)-f(y)|=|\eta_f-\eta_g|$ which is a positive constant.
\item $f,g$ both type~(ii):
we have $\tfrac12|\eta_f-\eta_g|\leq |g(y)-f(y)|\leq 2|\eta_f-\eta_g|$
for all $y\in J$.
\item $f,g$ both type~(iii): Assume that $\eta_f<\eta_g$ and take $y$ such that
$|y-y_{\min}(x)|\leq 0.8\sqrt{\eta_f}$. We have
$$
b_x(y)^2+b_x(f(y))^2=\eta_f,\quad
b_x(y)^2+b_x(g(y))^2=\eta_g.
$$
By Lemma~\ref{l:quadrant-circle}(2), we get
$\tfrac12\sqrt{\eta_f} \leq b_x(f(y))\leq \sqrt{\eta_f}$
and $\tfrac12\sqrt{\eta_g}\leq b_x(g(y))\leq \sqrt{\eta_g}$.
Now
$$
b_x(g(y))-b_x(f(y))={\eta_g-\eta_f\over b_x(g(y))+b_x(f(y))}
\ \in\
[\sqrt{\eta_g}-\sqrt{\eta_f},2(\sqrt{\eta_g}-\sqrt{\eta_f})]
$$
which implies that
$$
g(y)-f(y)\in
[\tfrac12(\sqrt{\eta_g}-\sqrt{\eta_f}),3(\sqrt{\eta_g}-\sqrt{\eta_f})].
$$
\end{itemize}
 
\noindent\underline{(2)(b):} We consider the following cases. In each of these cases
transversality follows by Lemma~\ref{l:transversal-der}.
\begin{itemize}
\item $f$ is type~(i) and $g$ is type~(ii): we have $f'(y)=1$ and $g'(y)<0$ for all $y\in J$.
\item $f$ is type~(i) and $g$ is type~(iii): we have $f''(y)=0$
and $g''(y)< -1$ for all $y\in J$.
\item $f$ is type~(ii) and $g$ is type~(iii): we have $|f''(y)|\leq 1$
and $g''(y)< -1$ for all $y\in J$. \qedhere
\end{itemize}
\end{proof}
%%%%%%%%%%%%%%%%%%%%%%%%%%%%%%%%%%%%%%%%%%%%%%%%%%%%%%%%%%%%%%%%%%%%%%%%%%%%%%%%

%%%%%%%%%%%%%%%%%%%%%%%%%%%%%%%%%%%%%%%%%%%%%%%%%%%%%%%%%%%%%%%%%%%%%%%%%%%%%%%%
\subsubsection{Rectangular Katz--Tao bound}
\label{s:RKT-proof}

In this subsection we establish a rectangular Katz--Tao bound~\eqref{e:RKT-needed}
for the set $\mathbb F_{\tilde r,m}$ defined in~\eqref{e:F-r-m-def}. We start with the main technical lemma,
which uses the analysis of~\S\ref{s:geometry} together with the properties
of moderate sets studied in~\S\ref{s:moderate}.
%%%%%%%%%%%%%%%%%%%%%%%%%%%%%%%%%%%%%%%%%%%%%%%%%%%%%%%%%%%%%%%%%%%%%%%%%%%%%%%%
\begin{lemma}
\label{l:RKT-Y}
Let $\tilde r\in\widetilde{\mathcal R}$, $x\in I_0$, and $I_1,I_2\subset\mathbb R$ be intervals with $|I_j|=r_j$ and $r_1,r_2\in [r,1]$.
Then
\begin{equation}
  \label{e:RKT-Y}
\big|\big\{
\tilde y\in Y^2\cap \Sigma_{\tilde r}\colon \mathcal A_x(\tilde y)\in I_1\times I_2
\big\}\big|_r
\leq C\log(1/r)\bigg({r_1r_2\over (\tilde r+\sqrt r) r^2}\bigg)^\delta.
\end{equation}
\end{lemma}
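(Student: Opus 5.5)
The plan is to describe the set $\{\tilde y\in\Sigma_{\tilde r}\colon \mathcal A_x(\tilde y)\in I_1\times I_2\}$ as a union of a bounded number of regions bounded by admissible curves, and then to apply Lemma~\ref{l:moderate-split-general} together with Lemma~\ref{l:moderate-regular}. The final count will come from comparing $|\Omega|_r$ with the area of the preimage, using the Jacobian formula~\eqref{e:A-x-J}.

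\textbf{Step 1: symmetries.} The map $\mathcal A_x$ is symmetric under $y_1\leftrightarrow y_2$. So up to a factor $2$ we may restrict to $y_2\ge y_1$.

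\textbf{Step 2: splitting into quadrants.} Split $[-10r_0,10r_0]^2$ into the quadrants~\eqref{e:quadrants}. The quadrants with $\varepsilon_1=-$ are handled like $\mathcal Q_{++}(x)$ after the reflection $y\mapsto 2y_{\min}(x)-y$, approximately. Alternatively, one redoes Definition~\ref{d:admissible} with $-\sqrt{\cdot}$. So it is enough to treat $\tilde y\in\mathcal Q_{++}(x)$.

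\textbf{Step 3: writing the region as a simple region.} Write $I_1=[\alpha_1,\alpha_1+r_1]$ and $I_2=[2c_{\min}+2\eta_1,\,2c_{\min}+2\eta_2]$, so that $\eta_2-\eta_1=r_2/2$. Intersected with $\mathcal Q_{++}(x)\cap\Sigma_{\tilde r}$, the constraint $\mathcal A_x(\tilde y)\in I_1\times I_2$ describes a region $\Omega$ in $(y_1,y_2)$. It lies above or below at most ten curves:
\begin{itemize}
\item two type~(ii) curves, coming from $F_{\tilde y}(x)\in I_1$ and from the quadrant boundary $F=2a_{\min}$;
\item two type~(iii) curves $\eta_1,\eta_2$;
\item type~(i) curves $y_2=y_1$, $y_2=y_1+\tilde r$ and $y_2=y_1+2\tilde r$.
\end{itemize}
All of these are graphs over an interval $J$. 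By Lemma~\ref{l:quadrant-circle}(1), $J\subset[y_{\min}-0.8\sqrt\eta,\,y_{\min}+0.8\sqrt\eta]$, which gives the restriction needed for type~(iii). When $\eta_1\le0$, the lower circle is simply absent.

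By Lemma~\ref{l:lines-circles}, any pair of these curves is parallel or transversal. Lemma~\ref{l:moderate-split-general} then covers $\Omega$ by $O(\log(1/r))$ vertically moderate regions $\Omega_\ell$ with $|\Omega_\ell|_r\le 9|\Omega|_r$. Lemma~\ref{l:moderate-regular} gives
\[
|\Omega\cap Y^2|_r\le C\log(1/r)\,|\Omega|_r^\delta .
\]

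\textbf{Step 4: bounding $|\Omega|_r$.} It remains to show
\[
|\Omega|_r\le C\,\frac{r_1r_2}{(\tilde r+\sqrt r)r^2}.
\]
This is the main obstacle. Since $|\Omega|_r\lesssim r^{-2}|\Omega(r)|$, we need to bound the area of the $r$-neighborhood.

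If $\tilde r\ge\sqrt r$, the Jacobian~\eqref{e:A-x-J} is about $\tilde r$ on $\Omega$, so $|\Omega|\lesssim r_1r_2/\tilde r$. The neighborhood, however, can be much larger than $\Omega$ when $\Omega$ is thin. To handle this I would use the moderate structure directly: on each moderate piece, inequality~\eqref{e:mod-reg-int-1} shows that $|\Omega_\ell|_r\approx(|J|/r+1)(T/r+1)$. So I will instead enlarge $I_1,I_2$ to $I_1(r), I_2(C\tilde r r)$ if needed, making the preimage contain the $r$-neighborhood. Indeed, $\mathcal A_x$ is Lipschitz with $|\partial_{y}\mathcal A_x^{(2)}|\lesssim|b_x(y_2)|$, so moving by $r$ changes $\partial_xF$ by $O(\sqrt\eta\,r)$. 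I expect the bookkeeping here to be delicate: one must check that the enlarged interval lengths $r_1+r$ and $r_2+\sqrt\eta\, r$ still give the bound $\lesssim r_1r_2/r^2\cdot(\tilde r+\sqrt r)^{-1}$. This uses $r_j\ge r$, and $\sqrt\eta\lesssim$ distance scales comparable with $\tilde r$ on $\mathcal Q_{++}$ near the diagonal.

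If $\tilde r=0$, the region is confined to a $2\sqrt r$-strip around the diagonal. There I bound $|\Omega(r)|$ by directly combining the strip width $\sqrt r$ with the extent $\lesssim r_1$ along the diagonal, and the extent $\lesssim r_2/\sqrt r$ in the transverse direction coming from the circle constraint. This should give $\lesssim r_1r_2/\sqrt r$ up to $r$-fattening.
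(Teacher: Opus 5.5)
Your architecture is the paper's: reduce to $y_2\ge y_1$ and the top quadrant, bound $\Omega$ by admissible curves, get $|\Omega\cap Y^2|_r\le C\log(1/r)|\Omega|_r^\delta$ from Lemmas~\ref{l:moderate-split-general} and~\ref{l:moderate-regular}, and bound $|\Omega|_r$ by a change of variables. Step~3, however, fails as written for the inner circle. Lemma~\ref{l:quadrant-circle}(1) only confines $y_1$ to $J'=[y_{\min}(x)-0.8\sqrt{\eta_2},\,y_{\min}(x)+0.8\sqrt{\eta_2}]$. The inner boundary $y\mapsto b_x^{-1}(\sqrt{\eta_1-b_x(y)^2})$ is of type~(iii) only on the smaller interval $J=[y_{\min}(x)-0.8\sqrt{\eta_1},\,y_{\min}(x)+0.8\sqrt{\eta_1}]$. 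Beyond $J$ its slope blows up (vertical tangent where $b_x(y)^2=\eta_1$) and then it is undefined. So you cannot feed all the curves into Lemma~\ref{l:moderate-split-general} over a single interval. The paper applies that lemma separately over $J$ and over the two components of $J'\setminus J$, dropping the inner-circle inequality on the latter. This is legitimate because in $\mathcal Q_{++}(x)$ one has $|b_x(y_1)|\le(1+\mathcal O(r_0))\,b_x(y_2)$, so $|y_1-y_{\min}(x)|>0.8\sqrt{\eta_1}$ already forces $b_x(y_1)^2+b_x(y_2)^2\ge 1.2\,\eta_1$. A similar caution applies to Step~2. In the quadrant $F_{\tilde y}(x)\le 2a_{\min}(x)$ (with $y_2\ge y_1$) it is $y_2$ that is confined near $y_{\min}(x)$, and the circle graphed over $y_1$ has a vertical tangent at $b_x(y_1)=-\sqrt\eta$. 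So replacing $\sqrt{\cdot}$ by $-\sqrt{\cdot}$ alone does not suffice; the paper swaps the roles of $y_1,y_2$ and uses horizontally moderate regions. Your reflection $y\mapsto 2y_{\min}(x)-y$ is only an approximate symmetry, and would require re-checking Lemmas~\ref{l:geometry-fns}--\ref{l:lines-circles} for the reflected functions.

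Step~4, by contrast, is easier than you expect. The comparison of $\sqrt\eta$ with $\tilde r$ that you invoke (and the enlargement $I_2(C\tilde r r)$ based on it) is false: in $\mathcal Q_{++}(x)$ only $\tilde r\lesssim\sqrt\eta$ holds, since both points may lie far from $y_{\min}(x)$ on the same side. It is also unnecessary. By~\eqref{e:distance-verified-0}, $\mathcal A_x$ is $3$-Lipschitz, so $\mathcal A_x(\Omega(r))\subset I_1(6r)\times I_2(6r)$, a set of area at most $169r_1r_2$ because $r_j\ge r$. For $\tilde r\ge\sqrt r$ and $r$ small, one still has $y_2-y_1\ge\tilde r-2r\ge\tilde r/2$ on $\Omega(r)$. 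Hence~\eqref{e:A-x-J} bounds the Jacobian below by $\tilde r$ there, and~\eqref{e:distance-verified-2} gives injectivity. Therefore $\tilde r|\Omega(r)|\le 169r_1r_2$, and~\eqref{e:volume-nbhd} converts this into the bound on $|\Omega|_r$. The only point of care is to prove the Jacobian bound and injectivity on $\Omega(r)$ rather than on $\Omega$. For $\tilde r=0$ the circle constraint plays no role. The local diffeomorphism $\tilde y\mapsto(F_{\tilde y}(x),y_2-y_1)$ gives $|\Omega(r)|\le 4(r_1+12r)\sqrt r$, and $\sqrt r\le r_2/\sqrt r$ because $r_2\ge r$.
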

%%%%%%%%%%%%%%%%%%%%%%%%%%%%%%%%%%%%%%%%%%%%%%%%%%%%%%%%%%%%%%%%%%%%%%%%%%%%%%%%
\begin{proof}
1. We start with some basic reductions. By~\eqref{e:geometry-fns-A} and \eqref{e:geofun-a}--\eqref{e:geofun-c} we have
$$
\mathcal A_x(I_0^2)\subset [-3r_0,3r_0]\times [2c_{\min}(x),5r_0^2].
$$
Thus we may assume that $I_1\subset [-3r_0,3r_0]$ and $I_2\subset [2c_{\min}(x),5r_0^2]$.
We may restrict our attention to $\{y_2\geq y_1\}$, since $\mathcal A_x(y_2,y_1)=\mathcal A_x(y_1,y_2)$.
Splitting $I_1$ into two intervals if necessary, we may
assume that either $I_1\subset [2a_{\min}(x),3r_0]$
or $I_1\subset [-3r_0,2a_{\min}(x)]$. We will only consider the first case,
with the second one handled similarly by switching the roles of $y_1,y_2$
(so that the vertically moderate sets used below become horizontally moderate).
To summarize, we henceforth may assume that
\begin{equation}
\label{e:I-special}
I_1\subset [2a_{\min}(x),3r_0],\quad
I_2\subset [2c_{\min}(x),5r_0^2]
\end{equation}
and it suffices to show the bound (recalling the definition~\eqref{e:Sigma-r-def})
\begin{equation}
  \label{e:RKT-Y-1}
\begin{gathered}
|\Omega\cap Y^2|_r
\leq C\log(1/r)\bigg({r_1r_2\over (\tilde r+\sqrt r) r^2}\bigg)^\delta
\\
\text{where }
\Omega:=
\begin{cases}
\{\tilde y\in \mathbb R^2\colon \tilde r\leq y_2-y_1\leq 2\tilde r,\ \mathcal A_x(\tilde y)\in I_1\times I_2\},& \text{if }\tilde r >0;
\\
\{ \tilde y\in\mathbb R^2\colon 0\leq y_2-y_1\leq 2\sqrt r,\ \mathcal A_x(\tilde y)\in I_1\times I_2\},&
\text{if }\tilde r=0.
\end{cases}
\end{gathered}
\end{equation}
Note that $\Omega$ is contained in the top quadrant $\mathcal Q_{++}(x)$ defined in~\eqref{e:quadrant++}.

\noindent 2. We show the covering bound
\begin{equation}
  \label{e:Omega-r-est}
|\Omega|_r\leq 2000 {r_1r_2\over (\tilde r+\sqrt r) r^2}.
\end{equation}
We first consider the case $\tilde r>0$; note that then $\tilde r\geq \sqrt r$.
We have by~\eqref{e:distance-verified-0}
$$
\mathcal A_x(\Omega(r))\subset I_1(6r)\times I_2(6r).
$$
On the other hand, by~\eqref{e:A-x-J} we have
$$
|\det \partial_{\tilde y}\mathcal A_x(\tilde y)|\geq \tilde r\quad\text{for all }\tilde y\in \Omega(r).
$$
The map $\mathcal A_x$ is injective on $\Omega(r)$ by~\eqref{e:distance-verified-2}.
Thus the change of variables formula gives
$$
\tilde r|\Omega(r)|\leq |\mathcal A_x(\Omega(r))|\leq (r_1+12r)(r_2+12r)\leq 169r_1r_2
$$
Together with~\eqref{e:volume-nbhd}, this gives~\eqref{e:Omega-r-est}:
$$
(r/2)^2|\Omega|_r\leq |\Omega(r)|\leq {169r_1r_2\over \tilde r}.
$$
Now, consider the case $\tilde r=0$. The map
$$
\mathcal B_x:\tilde y\mapsto (F_{\tilde y}(x),y_2-y_1)
$$
is a local diffeomorphism near $(0,0)$ when $x\in I_0$ since
$$
d\mathcal B_0(0,0)=\begin{pmatrix}
1 & 1 \\
-1 & 1
\end{pmatrix}.
$$
We have
$$
\mathcal B_x(\Omega(r))\subset I_1(6r)\times [-\sqrt r,3\sqrt r].
$$
Thus the change of variables formula and~\eqref{e:volume-nbhd} give~\eqref{e:Omega-r-est}:
$$
(r/2)^2|\Omega|_r\leq |\Omega(r)| \leq |\mathcal B_x(\Omega(r))| \leq 4(r_1+12r)\sqrt r
\leq 52{r_1r_2\over \sqrt r}.
$$

\noindent 3. We now split $\Omega$ into vertically moderate sets
using Lemma~\ref{l:moderate-split-general}. We write $I_1=[a_1,a_1+r_1]$ and $I_2=[a_2,a_2+r_2]$.
Recall from~\eqref{e:I-special} that
$$
2a_{\min}(x)\leq a_1\leq a_1+r_1\leq 3r_0,\quad
2c_{\min}(x)\leq a_2\leq a_2+r_2\leq 5r_0^2.
$$
We use~\eqref{e:geometry-fns-A}
to see that for $\tilde r>0$, the set $\Omega$ consists of points $(y_1,y_2)$ which satisfy the inequalities
$$
\begin{gathered}
\tilde r\leq y_2-y_1\leq 2\tilde r;
\\
a_1\leq a_x(y_1)+a_x(y_2)\leq a_1+r_1;
\\
a_2\leq 2(c_{\min}(x)+b_x(y_1)^2+b_x(y_2)^2)\leq a_2+r_2.
\end{gathered}
$$
When $\tilde r=0$, the set $\Omega$ is described in the same way,
except we replace the first line by $0\leq y_2-y_1\leq 2\sqrt r$.

Let $\eta:=\tfrac12 a_2 - c_{\min}(x)$, $\eta':=\tfrac12(a_2 + r_2) - c_{\min}(x)$
and
$$
\begin{aligned}
J\, &:=\big[y_{\min}(x)-0.8\sqrt\eta,y_{\min}(x)+0.8\sqrt\eta\big],
\\
J'\, &:=\big[y_{\min}(x)-0.8\sqrt{\eta'},y_{\min}(x)+0.8\sqrt{\eta'}\big].
\end{aligned}
$$
Note that $0\leq \eta< \eta'\leq 3r_0^2$.

We use Lemma~\ref{l:quadrant-circle}(i) and the fact that $\Omega\subset\mathcal Q_{++}(x)$ to see that
$\Omega\cap\{y_1\in J\}$ can be written as the set of $(y_1,y_2)$ satisfying
$y_1\in J$ and the following inequalities
(with the correct replacement for the first line when $\tilde r=0$):
\begin{equation}
  \label{e:RKT-Y-ineq2}
\begin{aligned}
y_1+\tilde r &\leq y_2\leq y_1 +2\tilde r;
\\
a_x^{-1}(a_1-a_x(y_1)) &\leq y_2\leq a_x^{-1}(a_1+r_1-a_x(y_1));
\\
b_x^{-1}\Big(\sqrt{\eta - b_x(y_1)^2}\Big) &\leq y_2 \leq
b_x^{-1}\Big( \sqrt{\eta' - b_x(y_1)^2}\Big).
\end{aligned}
\end{equation}
The same lemma gives a similar description of $\Omega\cap \{y_1\in J'\setminus J\}$,
except the first inequality in the last line is removed.
That lemma also shows that $\Omega\subset \{y_1 \in J'\}$. 

The inequalities~\eqref{e:RKT-Y-ineq2} have the form $f(y_1)\leq y_2$ or $y_2\leq g(y_1)$
where $f,g$ are admissible functions (see Definition~\ref{d:admissible}). By Lemma~\ref{l:moderate-split-general}
(applied over $J$ and over each of the two intervals in $J'\setminus J$),
whose assumptions are satisfied by Lemma~\ref{l:lines-circles}, we can write
$$
\Omega\subset\bigcup_{\ell=1}^L \Omega_\ell,\quad
L\leq 18\cdot 10^4\log(1/r)+9\cdot 10^5,\quad
|\Omega_\ell|_r\leq 9|\Omega|_r,
$$
where each $\Omega_\ell$ is a vertically moderate region. By Lemma~\ref{l:moderate-regular},
we have for each $\ell$
$$
|\Omega_\ell\cap Y^2|_r\leq C |\Omega_\ell|_r^\delta\leq C|\Omega|_r^\delta.
$$
It follows that
$$
|\Omega\cap Y^2|_r\leq C\log(1/r)|\Omega|_r^\delta
\leq C\log(1/r)\bigg({r_1r_2\over (\tilde r+\sqrt r)r^2}\bigg)^\delta
$$
where in the last inequality we used~\eqref{e:Omega-r-est}. This finishes the proof of~\eqref{e:RKT-Y-1}.
\end{proof}
%%%%%%%%%%%%%%%%%%%%%%%%%%%%%%%%%%%%%%%%%%%%%%%%%%%%%%%%%%%%%%%%%%%%%%%%%%%%%%%%
We now prove a version of Lemma~\ref{l:RKT-Y} for the projection map defined in~\eqref{e:function-projection}:
%%%%%%%%%%%%%%%%%%%%%%%%%%%%%%%%%%%%%%%%%%%%%%%%%%%%%%%%%%%%%%%%%%%%%%%%%%%%%%%%
\begin{lemma}
  \label{l:RKT-projection}
Let $\tilde r\in\widetilde{\mathcal R}$, $x\in I_0$, and $I_1,I_2\subset\mathbb R$ be intervals with $|I_j|=r_j$ and $r_1,r_2\in [r,1]$. Then
\begin{equation}
  \label{e:RKT-projection}
\#\{\tilde{\mathbf y}\in \mathcal D_r(Y^2\cap \Sigma_{\tilde r})\colon A_x(F_{\tilde{\mathbf y},\tilde r})\in I_1\times I_2\}
\leq
C\log(1/r)\bigg({r_1r_2\over (\tilde r+\sqrt r) r^2}\bigg)^\delta.
\end{equation}
\end{lemma}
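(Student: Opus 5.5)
The plan is to reduce~\eqref{e:RKT-projection} to Lemma~\ref{l:RKT-Y} applied to slightly enlarged intervals. The idea is that $F_{\tilde{\mathbf y},\tilde r}$ is $C^2$-close to $F_{\tilde y}$ for every $\tilde y\in\tilde{\mathbf y}$, so its $1$-jet at $x$ determines $\mathcal A_x(\tilde y)$ up to an error $\mathcal O(r)$.

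First, fix $\tilde{\mathbf y}\in\mathcal D_r(Y^2\cap\Sigma_{\tilde r})$ with $A_x(F_{\tilde{\mathbf y},\tilde r})\in I_1\times I_2$, and pick any $\tilde y\in\tilde{\mathbf y}\cap Y^2\cap\Sigma_{\tilde r}$, which is nonempty by the definition~\eqref{e:D-r-X-def}. By~\eqref{e:function-projection-close}, $\|F_{\tilde y}-F_{\tilde{\mathbf y},\tilde r}\|_{C^2(I_0)}\leq 7r$. Since both components of the $1$-jet are controlled by the $C^2$ norm, this gives
$$
\mathcal A_x(\tilde y)=A_x(F_{\tilde y})\in I_1(7r)\times I_2(7r).
$$
The enlarged intervals $I_j(7r)$ have lengths $r_j+14r\leq 15r_j$. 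If $15r_j>1$, we cover $I_j(7r)$ by at most $15$ intervals of length at most~$1$; this loses only a constant and keeps the scales in $[r,1]$ as Lemma~\ref{l:RKT-Y} requires.

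Second, compare the count of squares with a covering number. Each such $\tilde{\mathbf y}$ contains a point of the set
$$
S:=\{\tilde y\in Y^2\cap\Sigma_{\tilde r}\colon \mathcal A_x(\tilde y)\in I_1(7r)\times I_2(7r)\}.
$$
So the left-hand side of~\eqref{e:RKT-projection} is at most $\#(\mathcal D_r(S))$, and by~\eqref{e:D-r-bound} (with $d=2$) this is at most $9|S|_r$. Lemma~\ref{l:RKT-Y} applied to the enlarged intervals then bounds $|S|_r$ by $C\log(1/r)\big((15r_1)(15r_2)/((\tilde r+\sqrt r)r^2)\big)^\delta$, which is the claim after absorbing the constants into~$C$.

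The only mildly delicate point is the bookkeeping in the first step. We must check that the enlargement keeps the lengths within a constant factor of $r_j$ and within the allowed range $[r,1]$, and that choosing one representative point per square is legitimate. Beyond this the argument is a direct reduction with no real obstacle.
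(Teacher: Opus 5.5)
Your proposal is correct and is essentially the paper's argument: show that every counted square $\tilde{\mathbf y}$ meets the set of $\tilde y\in Y^2\cap\Sigma_{\tilde r}$ with $\mathcal A_x(\tilde y)$ in slightly enlarged intervals, bound the count by $9$ times that set's $r$-covering number via~\eqref{e:D-r-bound}, and apply Lemma~\ref{l:RKT-Y}. The differences are cosmetic: the paper uses the specific representative $\tilde y_{\tilde{\mathbf y}}$ from the proof of Lemma~\ref{l:function-projection}, which is $r$-close, so it enlarges by $r$ instead of your $7r$; you also spell out the case where an enlarged interval exceeds length~$1$, which the paper leaves implicit.
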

%%%%%%%%%%%%%%%%%%%%%%%%%%%%%%%%%%%%%%%%%%%%%%%%%%%%%%%%%%%%%%%%%%%%%%%%%%%%%%%%
\begin{proof}
Denote
$$
\begin{aligned}
\mathbf \Upsilon \, &:=
\{\tilde{\mathbf y}\in \mathcal D_r(Y^2\cap \Sigma_{\tilde r})\colon A_x(F_{\tilde{\mathbf y},\tilde r})\in I_1\times I_2\},
\\
\Upsilon \, & :=\{\tilde y\in Y^2\cap \Sigma_{\tilde r}\colon
\mathcal A_x(\tilde y)\in I_1(r)\times I_2(r)\}.
\end{aligned}
$$
Assume that $\tilde{\mathbf y}\in \mathbf\Upsilon$.
By the construction of $F_{\tilde{\mathbf y},\tilde r}$ in the proof of Lemma~\ref{l:function-projection},
there exists $\tilde y_{\tilde{\mathbf y}}\in \tilde{\mathbf y}\cap Y^2\cap \Sigma_{\tilde r}$
such that $\|F_{\tilde y_{\tilde{\mathbf y}}}-F_{\tilde{\mathbf y},\tilde r}\|_{C^2}\leq r$ and thus
$$
|\mathcal A_x(\tilde y_{\tilde{\mathbf y}})-A_x(F_{\tilde{\mathbf y},\tilde r})|\leq r.
$$
This implies that $\mathcal A_x(\tilde y_{\tilde{\mathbf y}})\in I_1(r)\times I_2(r)$, that is
$\tilde y_{\tilde{\mathbf y}}$ lies in $\Upsilon$.

We have shown that each $r$-square in $\mathbf \Upsilon$ intersects~$\Upsilon$,
that is $\mathbf\Upsilon\subset\mathcal D_r(\Upsilon)$.
It follows from~\eqref{e:D-r-bound} that
$$
\#(\mathbf\Upsilon)\leq 9 |\Upsilon|_r.
$$
It remains to estimate $|\Upsilon|_r$ by Lemma~\ref{l:RKT-Y}.
\end{proof}
%%%%%%%%%%%%%%%%%%%%%%%%%%%%%%%%%%%%%%%%%%%%%%%%%%%%%%%%%%%%%%%%%%%%%%%%%%%%%%%%

One consequence of Lemma~\ref{l:RKT-projection} is an upper bound on the multiplicity defined in~\eqref{e:multiplicity-def}:
%%%%%%%%%%%%%%%%%%%%%%%%%%%%%%%%%%%%%%%%%%%%%%%%%%%%%%%%%%%%%%%%%%%%%%%%%%%%%%%%
\begin{lemma}
\label{l:mul-bound}
Let $\tilde r\in\widetilde{\mathcal R}$ and $F\in \mathbb F_{\tilde r}$. Then
\begin{equation}
  \label{e:mul-bound}
M_{\tilde r}(F)\leq  C\log(1/r)(\tilde r+\sqrt r)^{-\delta}.
\end{equation}
\end{lemma}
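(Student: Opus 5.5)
This follows directly from Lemma~\ref{l:RKT-projection} applied with the smallest admissible rectangle. Fix $F\in\mathbb F_{\tilde r}$ and any $x\in I_0$; for definiteness take $x=0$. Put $(a,b):=A_x(F)$ and choose intervals $I_1,I_2$ of length $r_1=r_2=r$ containing $a$ and $b$ respectively, say $I_1=[a-r/2,a+r/2]$ and $I_2=[b-r/2,b+r/2]$.

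Every $\tilde{\mathbf y}$ counted in $M_{\tilde r}(F)$, see~\eqref{e:multiplicity-def}, satisfies $F_{\tilde{\mathbf y},\tilde r}=F$. Hence $A_x(F_{\tilde{\mathbf y},\tilde r})=(a,b)\in I_1\times I_2$, so
\begin{equation*}
M_{\tilde r}(F)\leq \#\{\tilde{\mathbf y}\in \mathcal D_r(Y^2\cap \Sigma_{\tilde r})\colon A_x(F_{\tilde{\mathbf y},\tilde r})\in I_1\times I_2\}.
\end{equation*}

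Now apply~\eqref{e:RKT-projection} with $r_1=r_2=r$, which lies in the allowed range $[r,1]$. The right-hand side becomes
\begin{equation*}
C\log(1/r)\Big(\frac{r^2}{(\tilde r+\sqrt r)r^2}\Big)^\delta=C\log(1/r)(\tilde r+\sqrt r)^{-\delta},
\end{equation*}
which is exactly~\eqref{e:mul-bound}.

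There is no real obstacle. The only point to check is that the hypotheses of Lemma~\ref{l:RKT-projection} hold: $x\in I_0$ is arbitrary there, and the intervals only need lengths in $[r,1]$, which our choice satisfies.
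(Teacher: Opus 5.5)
Your proposal is correct and matches the paper's proof: fix any $x\in I_0$, choose $r$-intervals $I_1,I_2$ with $A_x(F)\in I_1\times I_2$, observe that every $\tilde{\mathbf y}$ counted in $M_{\tilde r}(F)$ satisfies $A_x(F_{\tilde{\mathbf y},\tilde r})=A_x(F)$, and apply Lemma~\ref{l:RKT-projection} with $r_1=r_2=r$.
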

%%%%%%%%%%%%%%%%%%%%%%%%%%%%%%%%%%%%%%%%%%%%%%%%%%%%%%%%%%%%%%%%%%%%%%%%%%%%%%%%
\begin{proof}
Fix any $x\in I_0$ and any $r$-intervals $I_1,I_2$ such that $A_x(F)\in I_1\times I_2$.
Then
$$
\begin{aligned}
M_{\tilde r}(F) &\leq
\#\{\tilde{\mathbf y}\in \mathcal D_r(Y^2\cap\Sigma_{\tilde r})\colon
A_x(F_{\tilde{\mathbf y},\tilde r})\in I_1\times I_2\}
\\
&\leq C\log(1/r)(\tilde r+\sqrt r)^{-\delta}
\end{aligned}
$$
where in the last inequality we used Lemma~\ref{l:RKT-projection} with $r_1=r_2=r$.
\end{proof}
%%%%%%%%%%%%%%%%%%%%%%%%%%%%%%%%%%%%%%%%%%%%%%%%%%%%%%%%%%%%%%%%%%%%%%%%%%%%%%%%
Another consequence of Lemma~\ref{l:RKT-projection} is the following rectangular Katz--Tao bound
for the sets defined in~\eqref{e:F-r-m-def}:
%%%%%%%%%%%%%%%%%%%%%%%%%%%%%%%%%%%%%%%%%%%%%%%%%%%%%%%%%%%%%%%%%%%%%%%%%%%%%%%%
\begin{lemma}
  \label{l:RKT-bound}
The estimate~\eqref{e:RKT-needed} holds for $\mathbb F:=\mathbb F_{\tilde r,m}$ with
\begin{equation}
  \label{e:RKT-bound-C}
C^{\RKT}_{\tilde r,m}=Cm^{-1}\log(1/r)(\tilde r+\sqrt r)^{-\delta}.
\end{equation}
\end{lemma}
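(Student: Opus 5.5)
We need to show: for all $x\in I_0$, $r_1,r_2\in[r,1]$ and intervals $I_1,I_2$ with $|I_j|=r_j$,
$$
\#\{F\in\mathbb F_{\tilde r,m}\colon A_x(F)\in I_1\times I_2\}\leq Cm^{-1}\log(1/r)(\tilde r+\sqrt r)^{-\delta}\Big({r_1r_2\over r^2}\Big)^\delta .
$$
The plan is a double-counting argument: each function in $\mathbb F_{\tilde r,m}$ has at least $m$ preimages under the surjective projection map~\eqref{e:function-projection}, and these preimages are counted by Lemma~\ref{l:RKT-projection}.

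The first step is to fix $F\in\mathbb F_{\tilde r,m}$ with $A_x(F)\in I_1\times I_2$. By the definition~\eqref{e:multiplicity-def} and~\eqref{e:F-r-m-def}, there are $M_{\tilde r}(F)\geq m$ squares $\tilde{\mathbf y}\in\mathcal D_r(Y^2\cap\Sigma_{\tilde r})$ with $F_{\tilde{\mathbf y},\tilde r}=F$. Each of these satisfies $A_x(F_{\tilde{\mathbf y},\tilde r})=A_x(F)\in I_1\times I_2$.

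The second step uses that the map $\tilde{\mathbf y}\mapsto F_{\tilde{\mathbf y},\tilde r}$ is a function. Consequently, the fibers over distinct $F$ are disjoint. Summing over all such $F$ then gives
$$
m\cdot\#\{F\in\mathbb F_{\tilde r,m}\colon A_x(F)\in I_1\times I_2\}\leq \#\{\tilde{\mathbf y}\in \mathcal D_r(Y^2\cap \Sigma_{\tilde r})\colon A_x(F_{\tilde{\mathbf y},\tilde r})\in I_1\times I_2\}.
$$

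The third step bounds the right-hand side by Lemma~\ref{l:RKT-projection}, which gives $C\log(1/r)\big(r_1r_2/((\tilde r+\sqrt r)r^2)\big)^\delta$. Dividing by $m$ yields exactly~\eqref{e:RKT-needed} with the constant~\eqref{e:RKT-bound-C}.

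There is no real obstacle here, since all the geometric work is contained in Lemma~\ref{l:RKT-Y} and Lemma~\ref{l:RKT-projection}. The only point to check is that the fibers of the projection map are disjoint, and this is immediate from the map being well defined.
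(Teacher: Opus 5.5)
Your proposal is correct and is essentially the paper's proof. The paper likewise uses $M_{\tilde r}(F)\geq m$ and the disjointness of the fibers of $\tilde{\mathbf y}\mapsto F_{\tilde{\mathbf y},\tilde r}$ to bound $\#(\mathbb F_{\tilde r,m}\cap A_x^{-1}(I_1\times I_2))$ by $m^{-1}$ times the number of squares $\tilde{\mathbf y}$ with $A_x(F_{\tilde{\mathbf y},\tilde r})\in I_1\times I_2$, and then applies Lemma~\ref{l:RKT-projection}.
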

%%%%%%%%%%%%%%%%%%%%%%%%%%%%%%%%%%%%%%%%%%%%%%%%%%%%%%%%%%%%%%%%%%%%%%%%%%%%%%%%
\begin{proof}
Let $I_1,I_2\subset\mathbb R$ be intervals with $|I_1|=r_1$, $|I_2|=r_2$,
and $r_1,r_2\in [r,1]$. Take $x\in I_0$.
Then the bound~\eqref{e:RKT-needed} is proved as follows:
$$
\begin{aligned}
\#(\mathbb F_{\tilde r,m}\cap A_x^{-1}(I_1\times I_2))
&\leq m^{-1}\#\{\tilde{\mathbf y}\in \mathcal D_r(Y^2\cap\Sigma_{\tilde r})\colon
F_{\tilde{\mathbf y},\tilde r}\in \mathbb F_{\tilde r,m}\cap A_x^{-1}(I_1\times I_2)\}
\\
&\leq Cm^{-1}\log(1/r)\bigg({r_1r_2\over (\tilde r+\sqrt r)r^2}\bigg)^\delta
=C^{\RKT}_{\tilde r,m}\Big({r_1r_2\over r^2}\Big)^\delta.
\end{aligned}
$$
Here in the first inequality we use the definition~\eqref{e:F-r-m-def}
and in the second inequality we use Lemma~\ref{l:RKT-projection}.
\end{proof}
%%%%%%%%%%%%%%%%%%%%%%%%%%%%%%%%%%%%%%%%%%%%%%%%%%%%%%%%%%%%%%%%%%%%%%%%%%%%%%%%

%%%%%%%%%%%%%%%%%%%%%%%%%%%%%%%%%%%%%%%%%%%%%%%%%%%%%%%%%%%%%%%%%%%%%%%%%%%%%%%%
\subsection{End of the proof}

Take $\tilde r\in\widetilde{\mathcal R}$ and dyadic $m\geq 1$. For
$\mathbf p\in \mathcal D_r(X\times [-1,1])$, define (recalling~\eqref{e:incidence-set-def})
\begin{equation}
\label{e:W-def}
W_{\tilde r,m}(\mathbf p):=\#\{F\in\mathbb F_{\tilde r,m}\colon (F,\mathbf p)\in\mathscr I_8\}.
\end{equation}
%%%%%%%%%%%%%%%%%%%%%%%%%%%%%%%%%%%%%%%%%%%%%%%%%%%%%%%%%%%%%%%%%%%%%%%%%%%%%%%%
\begin{lemma}
  \label{l:AE-prepare}
We have
\begin{equation}
\label{e:AE-prepare}
|(X\times Y^4)\cap\mathcal E_r(\Theta)|
\leq Cr^5\log(1/r)^4\max_{\tilde r\in\widetilde{\mathcal R}}\max_m \bigg(m^2\sum_{\mathbf p\in \mathcal D_r(X\times [-1,1])}W_{\tilde r,m}(\mathbf p)^2\bigg).
\end{equation}
\end{lemma}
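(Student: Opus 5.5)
We discretize the quadruple integral in~\eqref{e:AE-rewritten} at scale $r$ and then pigeonhole in $(\tilde r,m)$ for each of the two pairs $\tilde y,\tilde y'$.

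\emph{Discretization.} Cover $X$ by the intervals $\mathbf x\in\mathcal D_r(X)$, and $Y^2$ by the squares $\tilde{\mathbf y},\tilde{\mathbf y}'\in\mathcal D_r(Y^2)$. Each $\tilde{\mathbf y}$ is assigned to some $\tilde r\in\widetilde{\mathcal R}$ with $\tilde{\mathbf y}\in\mathcal D_r(Y^2\cap\Sigma_{\tilde r})$; this is possible by~\eqref{e:Sigma-union}. Since $\#(\widetilde{\mathcal R})\le C\log(1/r)$, there are at most $C\log(1/r)^2$ pairs $(\tilde r,\tilde r')$. Similarly, there are at most $C\log(1/r)$ dyadic multiplicities $m$, by Lemma~\ref{l:mul-bound} or the trivial bound $M\le Cr^{-2}$. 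So there are at most $C\log(1/r)^4$ choices of $(\tilde r,m,\tilde r',m')$. This accounts for the factor $\log(1/r)^4$.

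\emph{Bounding a single cell.} The set $\mathbf x\times\tilde{\mathbf y}\times\tilde{\mathbf y}'$ has volume $r^5$. If it meets $\mathcal E_r$, take a point $(x,\tilde y,\tilde y')$ in it with $|F_{\tilde y}(x)-F_{\tilde y'}(x)|\le r$. Let $F=F_{\tilde{\mathbf y},\tilde r}$ and $G=F_{\tilde{\mathbf y}',\tilde r'}$. By~\eqref{e:function-projection-close}, we have $|F(x)-G(x)|\le 15r$. Put $\mathbf p:=\mathbf x\times\mathbf z$, where $\mathbf z\in\mathcal D_r(\mathbb R)$ contains $F(x)$. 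Then $\Gamma_F$ and $\Gamma_G$ both meet $\mathbf p(8r)$, since $|G(x)-F(x)|\le 15r<16r$ and the vertical slack inside $\mathbf p(8r)$ is $8r$ on each side. Hence $(F,\mathbf p),(G,\mathbf p)\in\mathscr I_8$. We have $\mathbf p\in\mathcal D_r(X\times[-1,1])$ because $|F|\le C_\Theta r_0\le1$.

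If the boundary constants are tight, I would instead take $\mathbf z$ with $\mathbf p\ni(x,F(x))$ and use a constant number of neighbouring cubes; this costs only a constant factor.

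\emph{Counting and Cauchy--Schwarz.} Fix $(\tilde r,m,\tilde r',m')$. The contribution of these cells is at most
\[
r^5\sum_{\mathbf p}\;\sum_{F\in\mathbb F_{\tilde r,m},\,(F,\mathbf p)\in\mathscr I_8}\;\sum_{G\in\mathbb F_{\tilde r',m'},\,(G,\mathbf p)\in\mathscr I_8} M_{\tilde r}(F)\,M_{\tilde r'}(G)\le 4mm'\,r^5\sum_{\mathbf p}W_{\tilde r,m}(\mathbf p)W_{\tilde r',m'}(\mathbf p).
\]
Here we used that each $\mathbf x$ together with the pair $(F,G)$ determines $\mathbf p$ up to a bounded number of choices. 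By Cauchy--Schwarz, the right-hand side is at most $4r^5\big(m^2\sum W_{\tilde r,m}^2\big)^{1/2}\big(m'^2\sum W_{\tilde r',m'}^2\big)^{1/2}$, which is bounded by $4r^5$ times the maximum in~\eqref{e:AE-prepare}. Summing over the $C\log(1/r)^4$ choices gives the claim.

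The main obstacle is bookkeeping: checking that each cell is assigned to a unique $\mathbf p$ (up to a constant) given $\mathbf x$ and $F$, and that the neighbourhood constant $8$ in $\mathscr I_8$ suffices. The multiplicity bound $M<2m$ turns the count of squares into a count of functions weighted by $m$, which is where the factors $m^2$ come from.
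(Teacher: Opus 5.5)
Your proposal is correct and is essentially the paper's proof. The paper first bounds the volume by $r\sum_{\mathbf p}|\{\tilde y\in Y^2\colon (F_{\tilde y},\mathbf p)\in\mathscr I_1\}|^2$ and applies Cauchy--Schwarz to the $C\log(1/r)^2$-term sum over $(\tilde r,m)$. You instead count $r^5$-cells and apply Cauchy--Schwarz in $\mathbf p$ to $\sum_{\mathbf p}W_{\tilde r,m}(\mathbf p)W_{\tilde r',m'}(\mathbf p)$, which gives the same $\log(1/r)^4$.

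The one slip is your first choice of $\mathbf z$. If $\mathbf z$ merely contains $F(x)$, then $G(x)$ can sit $15r$ beyond an edge of $\mathbf z$, which exceeds the $8r$ slack. You can fix this at no cost by taking $\mathbf z\ni\tfrac12(F(x)+G(x))$, so that both values lie in $\mathbf z(7.5r)$. Alternatively, as the paper does, choose $\mathbf z$ with $F_{\tilde y}(x),F_{\tilde y'}(x)\in\mathbf z(r)$ before projecting, then add the $7r$.
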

%%%%%%%%%%%%%%%%%%%%%%%%%%%%%%%%%%%%%%%%%%%%%%%%%%%%%%%%%%%%%%%%%%%%%%%%%%%%%%%%
\begin{proof}
Recall from~\eqref{e:AE-rewritten} that
$$
|(X\times Y^4)\cap \mathcal E_r(\Theta)|
=\big|\{(x,\tilde y,\tilde y')\in X\times Y^4\colon
|F_{\tilde y}(x)-F_{\tilde y'}(x)|\leq r\}\big|.
$$
For each $(x,\tilde y,\tilde y')\in X\times Y^4$
such that $|F_{\tilde y}(x)-F_{\tilde y'}(x)|\leq r$,
there exists
$\mathbf z\in \mathcal D_r([-1,1])$ such that
$F_{\tilde y}(x),F_{\tilde y'}(x)\in \mathbf z(r)$.
Take $\mathbf x\in \mathcal D_r(X)$ such that $x\in\mathbf x$
and denote $\mathbf p:=\mathbf x\times \mathbf z\in\mathcal D_r(X\times [-1,1])$.
Then $(x,F_{\tilde y}(x)),(x,F_{\tilde y'}(x))\in \mathbf p(r)$ and thus,
using the notation~\eqref{e:incidence-set-def}, we have
$(F_{\tilde y},\mathbf p),(F_{\tilde y'},\mathbf p)\in \mathscr I_1$.
It follows that
$$
|(X\times Y^4)\cap \mathcal E_r(\Theta)|
\leq r\sum_{\mathbf p\in\mathcal D_r(X\times [-1,1])}
|\{\tilde y\in Y^2\colon (F_{\tilde y},\mathbf p)\in\mathscr I_1\}|^2.
$$
By~\eqref{e:Sigma-union} we have for each $\mathbf p\in\mathcal D_r(X\times [-1,1])$
$$
|\{\tilde y\in Y^2\colon (F_{\tilde y},\mathbf p)\in\mathscr I_1\}|
\leq \sum_{\tilde r\in\widetilde{\mathcal R}}
|\{\tilde y\in Y^2\cap \Sigma_{\tilde r}\colon (F_{\tilde y},\mathbf p)\in\mathscr I_1\}|.
$$
Let $\tilde y\in Y^2\cap\Sigma_{\tilde r}$ be such that $(F_{\tilde y},\mathbf p)\in\mathscr I_1$
and take $\tilde{\mathbf y}\in\mathcal D_r(Y^2\cap\Sigma_{\tilde r})$ such that
$\tilde y\in\tilde{\mathbf y}$. 
By~\eqref{e:function-projection-close}, we have
$\|F_{\tilde y}-F_{\tilde{\mathbf y},\tilde r}\|_{C^2(I_0)}\leq 7r$ and thus
$(F_{\tilde{\mathbf y},\tilde r},\mathbf p)\in \mathscr I_8$. It follows that
$$
\begin{aligned}
|\{\tilde y\in Y^2\cap \Sigma_{\tilde r}\colon (F_{\tilde y},\mathbf p)\in\mathscr I_1\}|
&\leq
r^2\#\{\tilde{\mathbf y}\in \mathcal D_r(Y^2\cap\Sigma_{\tilde r})\colon (F_{\tilde{\mathbf y},\tilde r},\mathbf p)\in\mathscr I_8\}
\\
&=r^2\sum_{F\in\mathbb F_{\tilde r}\atop (F,\mathbf p)\in\mathscr I_8} M_{\tilde r}(F)
\leq 2r^2 \sum_m m W_{\tilde r,m}(\mathbf p).
\end{aligned}
$$
By Cauchy--Schwarz, we then get
$$
\begin{aligned}
|(X\times Y^4)\cap \mathcal E_r(\Theta)|
&\leq 4r^5\sum_{\mathbf p\in\mathcal D_r(X\times [-1,1])}\bigg(
\sum_{\tilde r\in\widetilde{\mathcal R}}\sum_m mW_{\tilde r,m}(\mathbf p)
\bigg)^2
\\
&\leq
Cr^5\log(1/r)^2\sum_{\mathbf p\in\mathcal D_r(X\times [-1,1])}
\sum_{\tilde r\in\widetilde{\mathcal R}}\sum_m m^2W_{\tilde r,m}(\mathbf p)^2
\\
&\leq Cr^5\log(1/r)^4\max_{\tilde r\in\widetilde{\mathcal R}}\max_m \bigg(m^2\sum_{\mathbf p\in \mathcal D_r(X\times [-1,1])}W_{\tilde r,m}(\mathbf p)^2\bigg)
\end{aligned}
$$
which gives~\eqref{e:AE-prepare}.
\end{proof}
%%%%%%%%%%%%%%%%%%%%%%%%%%%%%%%%%%%%%%%%%%%%%%%%%%%%%%%%%%%%%%%%%%%%%%%%%%%%%%%%
We now split the set of points $\mathcal D_r(X\times [-1,1])$
according to the values of $W_{\tilde r,m}$.
For dyadic $w\geq 1$, define
\begin{equation}
  \label{e:P-r-m-w-def}
\mathbb P_{\tilde r,m,w}:=\{\mathbf p\in\mathcal D_r(X\times [-1,1])\colon w\leq W_{\tilde r,m}(\mathbf p)<2w\}.
\end{equation}
Then Lemma~\ref{l:AE-prepare} implies that
\begin{equation}
  \label{e:AE-prepare-more}
|(X\times Y^4)\cap \mathcal E_r(\Theta)|\leq Cr^5\log(1/r)^5\max_{\tilde r\in\widetilde{\mathcal R}}\max_{m,w}
(mw)^2\#(\mathbb P_{\tilde r,m,w}).
\end{equation}
Using Theorem~\ref{t:OWY} (in the form of Corollary~\ref{c:OWY}) together
with the properties established in~\S\ref{s:AE-ingredients}, we show
%%%%%%%%%%%%%%%%%%%%%%%%%%%%%%%%%%%%%%%%%%%%%%%%%%%%%%%%%%%%%%%%%%%%%%%%%%%%%%%%
\begin{lemma}
  \label{l:final-nail}
We have for all $\tilde r\in\widetilde{\mathcal R}$, $m$, $w$
\begin{equation}
  \label{e:final-nail}
(mw)^2\#(\mathbb P_{\tilde r,m,w})\leq C\min\big(
mw(\tilde r+\sqrt r)^\delta r^{-3\delta},
(mw)^{-1}(\tilde r+\sqrt r)^{-\delta}r^{-4\delta-\epsilon}
\big).
\end{equation}
\end{lemma}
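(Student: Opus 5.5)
Both bounds come from counting the incidence set $\mathscr I:=(\mathbb F_{\tilde r,m}\times\mathbb P_{\tilde r,m,w})\cap\mathscr I_8$. Each $\mathbf p\in\mathbb P_{\tilde r,m,w}$ has $W_{\tilde r,m}(\mathbf p)\geq w$, so
$$
w\,\#(\mathbb P_{\tilde r,m,w})\leq \#(\mathscr I).
$$
The first bound will use a trivial upper bound for $\#(\mathscr I)$. The second will use Corollary~\ref{c:OWY}.

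\emph{First bound.} For each $F\in\mathbb F_{\tilde r,m}$, the number of $\mathbf p\in\mathcal D_r(X\times[-1,1])$ with $(F,\mathbf p)\in\mathscr I_8$ is at most $C\#(\mathcal D_r(X))\leq Cr^{-\delta}$. This holds because $F$ is Lipschitz, so over each $\mathbf x\in\mathcal D_r(X)$ its graph meets $\mathcal O(1)$ of the $8r$-neighborhoods of grid squares. Combining with \eqref{e:F-r-m-bound},
$$
\#(\mathscr I)\leq Cr^{-\delta}\#(\mathbb F_{\tilde r,m})\leq Cm^{-1}(\tilde r+\sqrt r)^\delta r^{-3\delta}.
$$
Hence
$$
(mw)^2\#(\mathbb P_{\tilde r,m,w})\leq m^2w\,\#(\mathscr I)\leq Cmw(\tilde r+\sqrt r)^\delta r^{-3\delta},
$$
which is the first term of the minimum.

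\emph{Second bound.} I apply Corollary~\ref{c:OWY} with $\mathbb F:=\mathbb F_{\tilde r,m}$ and $\mathbb P:=\mathbb P_{\tilde r,m,w}\subset\mathcal D_r(I'_0\times\mathbb R)$; recall $X\subset I'_0$. The hypotheses check as follows.
\begin{itemize}
\item $\mathbb F_{\tilde r,m}$ is $r$-separated and $\mathfrak T_\Theta$-transversal, as noted after \eqref{e:F-r-m-def}.
\item It lies in $B_{C^2(I_0)}(0,1)$ since $\|F_{\tilde y}\|_{C^2}\leq C_\Theta r_0$.
\item The RKT hypothesis holds with $C_{\RKT}=Cm^{-1}\log(1/r)(\tilde r+\sqrt r)^{-\delta}$ by Lemma~\ref{l:RKT-bound}.
\end{itemize}
The KT hypothesis is the step I expect to need the most care. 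For fixed $F$, the squares $\mathbf p$ with $(F,\mathbf p)\in\mathscr I_8$ and $\mathbf p\subset X\times[-1,1]$ lie in $\mathcal O(1)$ vertical layers over each $\mathbf x\in\mathcal D_r(X)$. So an $r_1$-square $Q$ meets at most $C\#\mathcal D_r(X\cap I)$ of them, where $|I|\approx r_1$. By Lemma~\ref{l:AD-cover} this is at most $C(r_1/r)^\delta$. Since $\delta\leq \tfrac23$, we have $\delta\leq\sigma=2\min(\delta,1-\delta)$, and $r_1/r\geq1$, so $(r_1/r)^\delta\leq (r_1/r)^\sigma$. Thus the KT hypothesis holds with $C_{\KT}=C$.

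Corollary~\ref{c:OWY} then gives
$$
w\,\#(\mathbb P)\leq\#(\mathscr I)\leq C\big(m^{-1}\log(1/r)(\tilde r+\sqrt r)^{-\delta}\big)^{\frac23}r^{-\frac23\delta-\epsilon}\#(\mathbb P)^{\frac23}\#(\mathbb F)^{\frac13}.
$$
Solving for $\#(\mathbb P)$ gives
$$
\#(\mathbb P)\leq Cw^{-3}m^{-2}(\tilde r+\sqrt r)^{-2\delta}r^{-2\delta-3\epsilon}\#(\mathbb F)\log(1/r)^2.
$$
Inserting \eqref{e:F-r-m-bound} yields
$$
\#(\mathbb P)\leq Cw^{-3}m^{-3}(\tilde r+\sqrt r)^{-\delta}r^{-4\delta-3\epsilon}\log(1/r)^2.
$$
Multiplying by $(mw)^2$ gives the second term of the minimum, after absorbing the logarithm and renaming $\epsilon$.

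For $r$ not small enough to apply Corollary~\ref{c:OWY}, the bound is trivial after enlarging $C$. The remaining bookkeeping is to confirm that the exponents combine exactly as claimed.
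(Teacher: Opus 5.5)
Your proposal is correct and follows the paper's proof essentially step for step. For the first bound, you combine $w\,\#(\mathbb P_{\tilde r,m,w})\leq\#(\mathscr I)$ with the trivial count $\#(\mathscr I)\leq Cr^{-\delta}\#(\mathbb F_{\tilde r,m})$ and \eqref{e:F-r-m-bound}. For the second, you apply Corollary~\ref{c:OWY} with the RKT constant from Lemma~\ref{l:RKT-bound} and the KT property coming from AD regularity of $X$ together with $\delta\leq\sigma$, then solve for $\#(\mathbb P_{\tilde r,m,w})$ and insert \eqref{e:F-r-m-bound}. Your extra checks, that $\mathbb F_{\tilde r,m}\subset B_{C^2(I_0)}(0,1)$ and that non-small $r$ is handled trivially, fill in details the paper leaves implicit.
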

%%%%%%%%%%%%%%%%%%%%%%%%%%%%%%%%%%%%%%%%%%%%%%%%%%%%%%%%%%%%%%%%%%%%%%%%%%%%%%%%
\begin{proof}
1. For $F\in\mathbb F_{\tilde r,m}$, define
$$
\mathbb P_{\tilde r,m,w}(F):=\{\mathbf p\in \mathbb P_{\tilde r,m,w}\colon (F,\mathbf p)\in\mathscr I_8\}.
$$
We first verify the Katz--Tao property for this set. Let $I_1\subset\mathbb R$
be an interval of length $r_1\in [r,1]$. Assume that $\mathbf p=\mathbf x\times \mathbf z\in\mathbb P_{\tilde r,m,w}(F)$ and $\mathbf x\in\mathcal D_r(I_1)$. We have $\mathbf x\in \mathcal D_r(X)$,
so $\mathbf x\in \mathcal D_r(X\cap I_1(r))$,
and similarly to~\eqref{e:Y-discrete-count} we have
$$
\#(\mathcal D_r(X\cap I_1(r)))\leq C\Big({r_1\over r}\Big)^\delta.
$$
Next, the condition
$(F,\mathbf p)\in\mathscr I_8$ implies that $\mathbf z\in \mathcal D_r(F(\mathbf x(8r))(8r))$.
The neighborhood $F(\mathbf x(8r))(8r)$ is contained in an interval of length~$33r$, thus 
$\#(\mathcal D_r(F(\mathbf x(8r))(8r)))\leq 35$.
Thus
\begin{equation}
  \label{e:KT-graph-shading}
\#(\mathbb P_{\tilde r,m,w}(F)\cap \mathcal D_r(I_1\times [-1,1]))
\leq C\Big({r_1\over r}\Big)^\delta.
\end{equation}
In particular, taking $I_1=I_0$, we have
\begin{equation}
  \label{e:P-F-basic-bound}
\#(\mathbb P_{\tilde r,m,w}(F))\leq Cr^{-\delta}.
\end{equation}

\noindent 2. Consider the set of incidences
$$
\mathscr I_{\tilde r,m,w}:=(\mathbb F_{\tilde r,m}\times \mathbb P_{\tilde r,m,w})\cap \mathscr I_8.
$$
From the definition~\eqref{e:P-r-m-w-def}, we see that
\begin{equation}
  \label{e:finail-int-1}
\#(\mathscr I_{\tilde r,m,w})
=\sum_{\mathbf p\in\mathbb P_{\tilde r,m,w}}W_{\tilde r,m}(\mathbf p)
\geq w \#(\mathbb P_{\tilde r,m,w}).
\end{equation}
On the other hand, by~\eqref{e:P-F-basic-bound} and~\eqref{e:F-r-m-bound}, we get
$$
\#(\mathscr I_{\tilde r,m,w})
=\sum_{F\in\mathbb F_{\tilde r,m}}
\#(\mathbb P_{\tilde r,m,w}(F))
\leq Cr^{-\delta}\#(\mathbb F_{\tilde r,m})\leq Cm^{-1}(\tilde r+\sqrt r)^\delta r^{-3\delta}.
$$
Together with~\eqref{e:finail-int-1} this gives the first half of~\eqref{e:final-nail}:
$$
\#(\mathbb P_{\tilde r,m,w})\leq C(mw)^{-1}(\tilde r+\sqrt r)^\delta r^{-3\delta}.
$$

\noindent 3. We now apply Corollary~\ref{c:OWY} with $\epsilon$ replaced by $0.1\epsilon$ and
$$
\mathbb F:=\mathbb F_{\tilde r,m},\quad
\mathbb P:=\mathbb P_{\tilde r,m,w}.
$$
The family $\mathbb F_{\tilde r,m}$ is $r$-separated and $\mathfrak T_\Theta$-transversal
as explained right after its definition~\eqref{e:F-r-m-def}.
By Lemma~\ref{l:RKT-bound}, the rectangular Katz--Tao bound~\eqref{e:RKT-needed}
holds with $C^{\RKT}_{\tilde r,m}$ given by~\eqref{e:RKT-bound-C}. For each $F\in\mathbb F$,
the family $\mathbb P_{\tilde r,m,w}(F)$ is an $(r,\sigma,C)$-KT cube family by~\eqref{e:KT-graph-shading},
since $\delta\leq \frac23$ and thus $\delta\leq \sigma$.
Thus the estimate~\eqref{e:OWY-c} holds:
$$
\#(\mathscr I_{\tilde r,m,w})\leq C(C_{\tilde r,m}^{\RKT})^{\frac23}r^{-\frac23\delta-0.1\epsilon}\#(\mathbb P_{\tilde r,m,w})^{\frac23}\#(\mathbb F_{\tilde r,m})^{\frac13}.
$$
Together with~\eqref{e:finail-int-1} this gives
$$
\begin{aligned}
\#(\mathbb P_{\tilde r,m,w}) &\leq
C w^{-3} (C_{\tilde r,m}^{\RKT})^2 r^{-2\delta-0.3\epsilon}\#(\mathbb F_{\tilde r,m})
\\
&\leq
C (mw)^{-3}(\tilde r+\sqrt r)^{-\delta} r^{-4\delta-\epsilon}
\end{aligned}
$$
where in the last inequality we used~\eqref{e:RKT-bound-C} and~\eqref{e:F-r-m-bound}.
This gives the second half of~\eqref{e:final-nail} and finishes its proof.
\end{proof}
%%%%%%%%%%%%%%%%%%%%%%%%%%%%%%%%%%%%%%%%%%%%%%%%%%%%%%%%%%%%%%%%%%%%%%%%%%%%%%%%
We are ready to finish
%%%%%%%%%%%%%%%%%%%%%%%%%%%%%%%%%%%%%%%%%%%%%%%%%%%%%%%%%%%%%%%%%%%%%%%%%%%%%%%%
\begin{proof}[Proof of Proposition~\ref{l:additive-bound}]
Replacing the minimum on the right-hand side of~\eqref{e:AE-prepare-more} by
the geometric mean, we see that
$$
(mw)^2\#(\mathbb P_{\tilde r,m,w})\leq C r^{-\frac72\delta-\frac12\epsilon}.
$$
Together with~\eqref{e:AE-prepare-more} this gives
$$
|(X\times Y^4)\cap \mathcal E_r(\Theta)|\leq C r^{5-\frac72\delta-\epsilon}
$$
which proves~\eqref{e:additive-bound}.
\end{proof}
%%%%%%%%%%%%%%%%%%%%%%%%%%%%%%%%%%%%%%%%%%%%%%%%%%%%%%%%%%%%%%%%%%%%%%%%%%%%%%%%

%%%%%%%%%%%%%%%%%%%%%%%%%%%%%%%%%%%%%%%%%%%%%%%%%%%%%%%%%%%%%%%%%%%%%%%%%%%%%%%%
% BIBLIOGRAPHY
%%%%%%%%%%%%%%%%%%%%%%%%%%%%%%%%%%%%%%%%%%%%%%%%%%%%%%%%%%%%%%%%%%%%%%%%%%%%%%%%
\bibliographystyle{alpha}
\bibliography{General,QC,Dyatlov,Projection}

\end{document}